\theoremstyle{plain}
\newtheorem{thm}{Theorem}[section]
\newtheorem{lem}[thm]{Lemma}
\newtheorem{prop}[thm]{Proposition}
\newtheorem{cor}[thm]{Corollary}
\theoremstyle{definition}
\newtheorem{defi}[thm]{Definition}
\newtheorem{rem}[thm]{Remark}
\newcommand{\Q}{\mathbb Q}
\newcommand{\R}{\mathbb R}
\newcommand{\Z}{\mathbb Z}
\newcommand{\nn}{\vskip 0.2cm}
\newcommand{\n}{\vskip 0.1cm}
\renewcommand{\@secnumfont}{\bfseries}
\def\section{%
  \@startsection{section}{1}
    {\z@}
    {2.0ex plus 0.8ex minus .1ex}
    {1.0ex plus .2ex}
    {\bfseries\large\centering\MakeUppercase}%
}
\begin{document}

\title [\ ] {On equivariantly formal $2$-torus manifolds}

\author{Li Yu}
\address{Department of Mathematics, Nanjing University, Nanjing, 210093, P.R.China.
  }
 \email{yuli@nju.edu.cn}




\thanks{2020 \textit{Mathematics Subject Classification}.   
 57S12, 57R91, 55N91, 57S17, 57S25.\\
 This work is partially supported by 
 National Natural Science Foundation of China (grant no.11871266) and 
 the PAPD (priority academic program development) of Jiangsu higher education institutions.}

\begin{abstract}
 A $2$-torus manifold is a closed connected smooth $n$-manifold with a
non-free effective smooth $\Z^n_2$-action. In this paper,
 we prove that a $2$-torus manifold is equivariantly formal if and only if the $\Z^n_2$-action is locally standard and every face of its orbit space (including the whole orbit space) is mod $2$ acyclic. Our study is parallel to the study 
 of torus manifolds with vanishing odd-degree cohomology
 by M.~Masuda and T.~Panov in~\cite{MasPanov06}.  As an application, we determine when such kind of $2$-torus manifolds
 can have regular $\mathrm{m}$-involutions (i.e. involutions with only isolated fixed points of the maximum possible number). 
  \end{abstract}

\maketitle

 \section{Introduction}
  
  Let $G$ be a compact Lie group and $BG$ be the classifying space of $G$. For a $G$-space $X$,
   the \emph{$G$-equivariant cohomology} of $X$ with coefficients in a field $\mathbf{k}$ is the singular cohomology of the Borel construction $X_G$ (see~\cite{Borel60}) 
   $$H^*_G(X;\mathbf{k}):=H^*(X_G;\mathbf{k}).$$
   
   There is a natural fibration $X\rightarrow X_G \rightarrow BG$ associated to $X_G$ called the \emph{Borel fibration}. If
  the inclusion of the fiber $\iota_X: X\rightarrow X_G$ induces a surjection on cohomology
$\iota_X^* : H^*_G(X;\mathbf{k})\rightarrow H^*(X;\mathbf{k})$, 
 $X$ is called called (cohomologically) \emph{equivariantly formal} over $\mathbf{k}$.
 This term was coined in 1998 by Goresky, Kottwitz, and
MacPherson in~\cite{GKM98}. But this condition had already been studied by A.~Borel in ~\cite[\S\,4]{Borel53} and~\cite[Ch.\,XII]{Borel60} where $X$ is called \emph{totally non-homologous to zero in} $X_G$
(also see~\cite[Ch.\,VII]{Bred72}). \n

For some special groups $G$ shown below, the equivariant formality of a $G$-action can be interpreted in some other ways (see~\cite[\S\,4]{Borel53}, \cite[Ch.\,3]{AllPuppe93} and~\cite[Sec.\,4]{AllFranzPuppe21}).\n

 \begin{itemize}
\item 
 When $BG$ is simply connected (e.g. $G$ is a torus $T^r=(S^1)^r$),  $X$ is equivariantly formal if and only if the Serre spectral sequence of the Borel fibration of $X$ degenerates at the $E_2$ stage.
 \n
 \item When $G$ is the $p$-torus $\Z^r_p$ ($p$ is prime), $X$ being equivariantly formal is equivalent to either one of the following conditions.
 \begin{itemize}
  \item[(i)] The Serre spectral sequence with $\Z_p$-coefficients of the Borel fibration of $X$  degenerates at the $E_2$ stage and the induced action of $\Z^r_p$ on $H^*(X;\Z_p)$ is trivial.\n
  \item[(ii)] $H^*_{\Z^r_p}(X;\Z_p)\cong H^*(X;\Z_p)\otimes H^*(B\Z^r_p;\Z_p)$ is a free $H^*(B\Z^r_p;\Z_p)$-module.
  \end{itemize}
 \end{itemize}
 
Due to the above fact, we call a $\Z^r_p$-action on $X$ \emph{weakly equivariantly formal} if we only assume that the Serre spectral sequence (with $\Z_p$-coefficients) of the Borel fibration of $X$ degenerates at the $E_2$ stage. So an equivariantly formal
$\Z^r_p$-action is always weakly equivariantly formal.\n

 When $G=T^r$ or $\Z^r_2$ and $\mathbf{k}=\Q$ or $\Z_2$ respectively, there is another equivalent
  description of equivariantly formal $G$-actions
    given by the so called
  ``Atiyah-Bredon sequence'' (see Bredon~\cite{Bred74} and Franz-Puppe~\cite{FranzPuppe07} for the $T^r$ case, and Allday-Franz-Puppe~\cite{AllFranzPuppe21} for the $\Z^r_2$ case).
 In addition, there are many sufficient conditions for
  a $T^r$-action to be equivariantly formal (for example: 
  vanishing of odd-degree cohomology, all homology classes being representable by $T^r$-invariant cycles, etc.).  \n

 Equivariantly formal $G$-spaces provide many nice examples in geometry and topology.
Some of them are:
  \begin{itemize}
  \item Smooth compact toric varieties.\n  
  \item Hamiltonian $G$-actions on symplectic manifolds which have moment maps (see Atiyah-Bott~\cite{AtiyBott84} and Jeffrey-Kirwan~\cite{JefKirw95}).\n  
  \item Quasitoric manifolds and small covers defined in Davis-Januszkiewicz~\cite{DaJan91}.\n  
  \item Torus manifolds with vanishing odd degree cohomology (see Masuda-Panov~\cite{MasPanov06}).
  \end{itemize}

   In addition, when $G=T^r$ or
 $(\Z_p)^r$, the following theorem gives us an easy way to recognize equivariantly formal $G$-actions.
 
   \begin{thm}[see Theorem (3.10.4) in Allday-Puppe~\cite{AllPuppe93}]
    \label{thm:AllPuppe}  
       Let $G=T^r$ or $(\Z_p)^r$ where $p$ is a prime
and $\mathbf{k}=\Q$ or $\Z_p$ respectively.
Let $X$ be a paracompact $G$-space with only finitely many orbit types and $\dim_{\mathbf{k}} H^*(X;\mathbf{k})<\infty$. Then the fixed point set
$X^G$ always satisfies 
$$\dim_{\mathbf{k}} H^*(X^G;\mathbf{k}) \leq \dim_{\mathbf{k}} H^*(X;\mathbf{k})$$
 where the equality holds if and only if  
 $X$ is equivariantly formal over $\mathbf{k}$. Here $\dim_{\mathbf{k}} H^*(X;\mathbf{k})$ denotes the sum of the rank of the cohomology groups of $X$ in all dimensions over $\mathbf{k}$.
    \end{thm}

  A very special case is when $G=\Z_2$ and
  $X^{\Z_2}$ consists only of isolated points. 
  By Theorem~\ref{thm:AllPuppe}, we have
   \begin{equation} \label{Equ:FixPointSet}
      |X^{\Z_2}| = \dim_{\Z_2}H^*(X^{\Z_2};\Z_2) \leq   \dim_{\Z_2} H^*(X;\Z_2)
      \end{equation}
      
 Such a $\Z_2$-action on $X$ is equivariantly formal if and only if the number of the fixed points reaches the maximum, i.e.
$|X^{\Z_2}| = \dim_{\Z_2}H^*(X;\Z_2)$. In this case, 
the involution determined by the $\Z_2$-action is called an \emph{$\mathrm{m}$-involution} on $X$ (this term was named by Puppe~\cite{Puppe01}). \n

 There is an interesting relation between $\mathrm{m}$-involutions on closed manifolds and binary codes. It was shown in~\cite{Puppe01} that one can obtain 
a self-dual binary code from any $\mathrm{m}$-involution on an odd-dimensional closed manifold. This motivates the study in Chen-L\"{u}-Yu~\cite{ChenLuYu18} on the $\mathrm{m}$-involutions on a special kind of closed  manifolds called \emph{small covers} (see~\cite{DaJan91}).     
  In this paper, we want to study a more general type of closed manifolds with $2$-torus actions defined below.\n
  
  \begin{defi}[see L\"{u}-Masuda~\cite{MaLu08}]
   A \emph{$2$-torus manifold} is a closed connected smooth $n$-manifold $W$ with a
non-free effective smooth action of $\Z^n_2$. 
For such a manifold $W$, since $\dim(W)=n=\mathrm{rank}(\Z^n_2)$ and the $\Z^n_2$-action is effective, the fixed point set $W^{\Z^n_2}$ must be discrete. Then since $W$ is compact, $W^{\Z^n_2}$ is a finite set of isolated points (if not empty).  Note that we require all $2$-torus manifolds to be connected in this paper. 
\begin{itemize}
 
 \item For brevity, we call a $2$-torus manifold $W$ \emph{equivariantly formal} or \emph{weakly equivariantly formal} if the $\Z^n_2$-action on $W$ is so, respectively.
\n

\item
We call $W$ \emph{locally standard}
if for every point $x \in W$, there is a $\Z^n_2$-invariant neighborhood $V_x$ of $x$ such that $V_x$
is equivariantly homeomorphic to an invariant open subset of a real $n$-dimensional faithful linear representation space of $\Z^n_2$. An equivalently way to describe such a neighborhood $V_x$ is: $V_x$ is weakly equivariantly homeomorphic
to an invariant open subset of $\R^n$ under the standard $\Z^n_2$-action defined by: for any $(x_1,\cdots, x_n)\in \R^n$ and  $(g_1,\cdots, g_n)\in \Z^n_2$,
  \[ \quad\ (g_1,\cdots, g_n) \cdot (x_1,\cdots, x_n)\longmapsto \big((-1)^{g_1}x_1,\cdots,
   (-1)^{g_n}x_n \big). \]

   \item Every non-zero element $g\in \Z^n_2$ determines a nontrivial involution $\tau_g$ on $W$, called a \emph{regular involution} on $W$.  
 
 \end{itemize}
\end{defi}

 We will prove in Theorem~\ref{Thm:Locally-Std} that if a  $2$-torus manifold is equivariantly formal, then it must be locally standard.\n
 
 For an $n$-dimensional locally standard $2$-torus manifold $W$, the orbit space $Q=W\slash \Z^n_2$ naturally becomes a 
   connected smooth nice $n$-manifold with corners and with non-empty boundary (since the $\Z^n_2$-action is non-free). Moreover, 
  \begin{itemize}
  \item 
  The $\Z^n_2$-action on $W$ determines a \emph{characteristic function}
   $$\lambda_W: \{F_1,\cdots, F_m\}\rightarrow \Z^n_2$$ where $F_1,\cdots, F_m$ are all the facets (codimension-one faces) of $Q$.\n
  \item The free part of the $\Z^n_2$-action on $W$ determines a principal $\Z^n_2$-bundle $\xi_W$ over $Q$.
  \end{itemize}
  
   It is shown in L\"{u}-Masuda~\cite[Lemma 3.1]{MaLu08}
that $W$ can be recovered from the data $(Q,\lambda_W, \xi_W)$ up to equivariant homeomorphism. 
 In addition, let $\pi: W\rightarrow Q$ denote the orbit map.
    If $f$ is a codimension-$k$ face of $Q$, then $W_f:=\pi^{-1}(f)$ is a codimension-$k$ submanifold of $W$ called a \emph{facial submanifold of $W$}. 
  Let $G_f$ denote the isotropy subgroup of $W_f$.
   Then $W_f$ is also a $2$-torus manifold with respect to the induced action of $\Z^n_2\slash G_f$. 
 In the following, when we say $W_f$ is equivariantly formal, we always consider $W_f$ being equipped with the induced $\Z^n_2\slash G_f$-action from $W$. 
   \n
 
 The main purpose of this paper is to answer the following two questions.\n
  
  \textbf{Question-1:} What kind of
  $2$-torus manifolds are equivariantly formal?\n

   \textbf{Question-2:}  What kind of locally standard $2$-torus manifolds have regular $\mathrm{m}$-involutions?
 \n

 Generally speaking, it is very hard to compute the equivariant cohomology of a locally standard $2$-torus manifold $W$ directly from its orbit space $Q$ and the data $(\lambda_W, \xi_W)$.  So it is difficult to judge whether $W$ is equivariantly formal by directly verifying the condition in the definition. 
 Meanwhile, it was proved by Masuda-Panov~\cite{MasPanov06} that
  a smooth $T^n$-action on a connected smooth
 $2n$-manifold with non-empty fixed points is equivariantly formal if and only if the $T^n$-action is locally standard and every face of its orbit space is acyclic (also see Goertsches-T\"oben~\cite[Theorem 10.19]{GoerTob10} for a reformulation of this result). 
 This result is also implied by Franz~\cite[Theorem 1.3]{Franz17}. The arguments in~\cite{MasPanov06} inspire us to prove the following parallel result for $2$-torus manifolds.
 
   \begin{thm} \label{thm:Main-1}
      Let $W$ be a $2$-torus manifold with orbit space $Q$.
      \begin{itemize}
      \item[(i)] $W$ is equivariantly formal if and only if $W$ is locally standard and $Q$ is mod $2$ face-acyclic.\n
      \item[(ii)] $W$ is equivariantly formal
      and $H^*(W;\Z_2)$
     is generated by its degree-one part as a ring if and only if
      $W$ is locally standard and $Q$ is a mod $2$ homology polytope.
      \end{itemize}
   \end{thm}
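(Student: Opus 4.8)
The plan is to adapt the strategy of Masuda--Panov~\cite{MasPanov06} to the $2$-torus setting, working with $\Z_2$-coefficients throughout and using that $H^*(B\Z^n_2;\Z_2)=\Z_2[t_1,\dots,t_n]$ is again a polynomial ring, with one generator per $\Z_2$-factor. By Theorem~\ref{Thm:Locally-Std} the hypothesis ``equivariantly formal'' already forces ``locally standard'', so in both (i) and (ii) we may assume from the outset that $W$ is locally standard over the nice manifold with corners $Q=W/\Z^n_2$, reconstructed from $(Q,\lambda_W,\xi_W)$ via L\"{u}--Masuda~\cite[Lemma 3.1]{MaLu08}. The quantitative criterion we use is this: by \eqref{Equ:FixPointSet} and Theorem~\ref{thm:Hsiang}, $W$ is equivariantly formal if and only if $\dim_{\Z_2}H^*(W;\Z_2)=|W^{\Z^n_2}|$, and for locally standard $W$ the fixed points are exactly the preimages of the vertices of $Q$, so the right-hand side equals the number of vertices of $Q$; equivalently, $W$ is equivariantly formal if and only if $H^*_{\Z^n_2}(W;\Z_2)$ is a free $\Z_2[t_1,\dots,t_n]$-module. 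All the arguments proceed by induction on $\dim W=n$, the inductive step being carried by the characteristic submanifolds $W_i=\pi^{-1}(F_i)$ over the facets $F_i$ of $Q$, which are $(n-1)$-dimensional locally standard $2$-torus manifolds with orbit space $F_i$.

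\smallskip
\noindent\emph{The ``if'' part of (i).} Assume $W$ is locally standard and $Q$ is mod $2$ face-acyclic. Any minimal face of $Q$ is a closed mod $2$ acyclic manifold, hence a point, so $Q$ has vertices and $W^{\Z^n_2}\neq\emptyset$; moreover $H^1(Q;\Z_2)=0$, so $\xi_W$ is trivial and $W$ is the canonical model built from $\lambda_W$ alone. Filter $W$ by the preimages $\pi^{-1}(F^{\circ})$ of the relative interiors $F^{\circ}$ of the faces $F$ of $Q$: over $F^{\circ}$ the orbit map restricts to a trivial finite covering, so $\pi^{-1}(F^{\circ})$ is mod $2$ homotopy equivalent to a disjoint union of copies of the acyclic $F$. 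Running the Mayer--Vietoris spectral sequence of this stratification with $\Z_2$-coefficients --- equivalently, inducting over the faces ordered by dimension and splicing in disk bundles over the characteristic submanifolds $W_i$, whose cohomology is supplied by the inductive hypothesis on $n$ applied to the face-acyclic $F_i$ --- the mod $2$ acyclicity of every face annihilates all the higher terms, and a dimension count shows that $\dim_{\Z_2}H^*(W;\Z_2)$ equals the number of vertices of $Q$; by the criterion above, $W$ is equivariantly formal. (The same bookkeeping carried out $\Z^n_2$-equivariantly exhibits $H^*_{\Z^n_2}(W;\Z_2)$ as a free $\Z_2[t_1,\dots,t_n]$-module.)

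\smallskip
\noindent\emph{The ``only if'' part of (i).} Assume $W$ is equivariantly formal; then it is locally standard. One first shows that each characteristic submanifold $W_i$ (or, if disconnected, each of its components) is an equivariantly formal $2$-torus manifold of dimension $n-1$ with orbit space $F_i$: restricting equivariant formality of the $\Z^n_2$-action to the subgroup $\langle\lambda_W(F_i)\rangle\cong\Z_2$ and applying Theorem~\ref{thm:Hsiang} twice --- first to this $\Z_2$ acting on $W$, then to $\Z^n_2/\langle\lambda_W(F_i)\rangle\cong\Z^{n-1}_2$ acting on the fixed set $W^{\tau_{\lambda_W(F_i)}}$, whose fixed-point set is $W^{\Z^n_2}$ --- forces the $\Z^{n-1}_2$-action on $W^{\tau_{\lambda_W(F_i)}}$, hence on its invariant summand $W_i$, to be equivariantly formal. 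By the inductive hypothesis every face of every $F_i$ is then mod $2$ acyclic, hence so is every proper face of $Q$. It remains to show that $Q$ itself is mod $2$ acyclic: running the exact-sequence computation of the ``if'' part in reverse, with all proper-face contributions now under control, the identity $\dim_{\Z_2}H^*(W;\Z_2)=|W^{\Z^n_2}|$ forced by equivariant formality --- whose right-hand side is the number of vertices of $Q$ --- leaves no room for $\widetilde H^*(Q;\Z_2)$ to be nonzero. This last step --- genuinely global, since the characteristic-submanifold induction only reaches faces contained in facets --- is the part I expect to be the main obstacle; a subsidiary technical nuisance is bookkeeping the bundle $\xi_W$, which, however, trivializes as soon as the relevant faces are known to be mod $2$ acyclic.

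\smallskip
\noindent\emph{Part (ii).} By (i) it remains to match ``$H^*(W;\Z_2)$ is generated in degree one'' with ``$Q$ is a mod $2$ homology polytope'' among the locally standard $W$ whose orbit space is mod $2$ face-acyclic. When $Q$ is a mod $2$ homology polytope its face poset behaves like that of a simple polytope, $\xi_W$ is again trivial, and the stratification argument identifies $H^*_{\Z^n_2}(W;\Z_2)$ with the mod $2$ face ring $\Z_2[Q]$, which is generated in degree one; then $H^*(W;\Z_2)\cong\Z_2[Q]/(\theta_1,\dots,\theta_n)$ for the degree-one linear system of parameters determined by $\lambda_W$, and this too is generated in degree one. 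Conversely, the degree-one part of $H^*(W;\Z_2)$ is carried by the mod $2$ Poincar\'e duals $[W_i]^{\vee}$ of the characteristic submanifolds, and a product $[W_{i_1}]^{\vee}\cdots[W_{i_k}]^{\vee}$ is the Poincar\'e dual of the class of $W_{i_1}\cap\cdots\cap W_{i_k}$; requiring $H^*(W;\Z_2)$ to be generated in degree one, together with the face-acyclicity already granted by (i), forces every nonempty intersection of facets of $Q$ to be a single connected mod $2$ acyclic face --- that is, $Q$ is a mod $2$ homology polytope.
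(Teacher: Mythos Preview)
Your overall framework---adapt Masuda--Panov to $\Z_2$-coefficients, use the numerical criterion $|W^{\Z^n_2}|=\dim_{\Z_2}H^*(W;\Z_2)$, and induct through characteristic submanifolds---is right, but your order of attack is reversed from the paper's, and the reversal is exactly where your acknowledged gap sits. The paper proves (ii) \emph{first} and then reduces (i) to (ii) by a device you do not mention: blowing up $W$ along enough facial submanifolds (barycentrically subdividing $\mathcal{P}_Q$) produces $\widehat W$ over $\widehat Q$ with $|\mathcal{P}_{\widehat Q}|$ an honest simplicial complex, and both ``equivariantly formal'' and ``mod $2$ face-acyclic'' are preserved under blow-up (Proposition~\ref{Prop:blow-up-Equiv}, Lemma~\ref{Lem:Q-cut-face}). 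Once $|\mathcal{P}_{\widehat Q}|$ is simplicial, $\Z_2[\widehat Q]$ is generated in degree one, so Corollary~\ref{Cor:Equiv-Cohom-Face-Ring} and surjectivity of $\iota^*_{\widehat W}$ force $H^*(\widehat W;\Z_2)$ to be generated in degree one, and (ii) applies. This sidesteps your direct stratification argument for (i), which as written is not a Mayer--Vietoris situation (the strata $\pi^{-1}(F^\circ)$ are disjoint); a skeletal filtration could be made to work but its collapse needs Cohen--Macaulayness of $\Z_2[Q]$, which is part of what is at stake.

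The global step you flag as the main obstacle---showing $Q$ itself is mod $2$ acyclic once all proper faces are---is handled in the paper only within (ii), and not by ``running the exact sequence in reverse'' but by comparison with an auxiliary model: one builds an equivariant map $\Phi\colon W\to W^\vee=M_{Q^\vee}(\lambda_W^\vee)$ over the cone $Q^\vee=\mathrm{Cone}(|\mathcal{P}_Q|)$, proves via Proposition~\ref{Prop:Cohom-Polytope} that $|\mathcal{P}_Q|$ is Gorenstein* over $\Z_2$ (so $H^*(Q^\vee,\partial Q^\vee;\Z_2)\cong H^*(D^n,S^{n-1};\Z_2)$), and then a five-lemma on the long exact sequences of the pairs $(W,\bigcup_i W_i)$ and $(W^\vee,\bigcup_i W^\vee_i)$, together with $H^*_G(W;\Z_2)\cong\Z_2[Q]\cong H^*_G(W^\vee;\Z_2)$, forces $\phi^*\colon H^*(Q^\vee,\partial Q^\vee;\Z_2)\to H^*(Q,\partial Q;\Z_2)$ to be an isomorphism. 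Your proposal has neither the comparison space $W^\vee$ nor the Gorenstein* input, and without them the global step does not close. Finally, your sketch of the converse in (ii) underestimates Lemma~\ref{Lem:Conn-Intersect}: connectedness of multiple intersections is extracted from the equivariant Gysin maps and the fact that $H^*_G(W;\Z_2)$ is multiplicatively generated by the Thom classes $\tau_i$, not from an ordinary Poincar\'e-dual-of-intersection identity.
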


 The definitions of ``mod $2$ face-acyclic'' and ``mod $2$ homology polytope'' are given in Definition~\ref{Def:polytope}. 
 \n

    The main strategy in our proof of Theorem~\ref{thm:Main-1} is very similar to the strategy used in~\cite{MasPanov06} for equivariantly formal torus manifolds. Besides, our proof  
  uses the mod $2$ GKM theory introduced in Biss-Guillemin-Holm~\cite{BisGuiHolm04} which allows us to observe the equivariant cohomology of an equivariantly formal
$2$-torus manifold by restricting to its fixed point set (see Section~\ref{Subsec:mod2-GKM}).

 \begin{rem}
  If a $2$-torus manifold $W$ is assumed to be locally standard in the first place, Theorem~\ref{thm:Main-1}\,(i) can also be derived from Chaves~\cite[Theorem 1.1]{Chav20} whose proof uses the theory of syzygies in the mod $2$ equivariant cohomology (see Allday-Franz-Puppe~\cite[Theorem 10.2]{AllFranzPuppe21}) and the mod $2$ ``Atiyah-Bredon sequence''. But we will use a completely different approach in our proof here.
 \end{rem}

 Using Theorem~\ref{thm:Main-1}, we can easily derive the following theorem which gives an answer to Question-$2$.
   
    \begin{thm} \label{thm:Main-2}
      Let $W$ be an $n$-dimensional locally standard $2$-torus manifold with orbit space $Q$. Then there exists a regular $\mathrm{m}$-involution on $W$ if and only if $Q$ is mod $2$ face-acyclic (or equivalently $W$ is equivariantly formal) and the values of
      the characteristic function $\lambda_W$ 
       on all the facets of $Q$ consist exactly of a linear basis of $\Z^n_2$.
   \end{thm}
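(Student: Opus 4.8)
Write $\pi\colon W\to Q$ for the orbit map, and for a face $F$ of $Q$ let $G_F\subseteq\Z^n_2$ be the isotropy subgroup shared by all points of $\pi^{-1}$ over the relative interior of $F$ --- equivalently, the subgroup generated by the $\lambda_W$-values of the facets containing $F$. A regular $\mathrm{m}$-involution is a $\tau_g$ with $0\ne g\in\Z^n_2$ whose fixed set $W^{\tau_g}$ is a finite non-empty set of isolated points with $|W^{\tau_g}|=\dim_{\Z_2}H^*(W;\Z_2)$, the maximum allowed by \eqref{Equ:FixPointSet}. Since $\Z^n_2$ is abelian, $W^{\tau_g}=\pi^{-1}(Q_g)$ with $Q_g:=\bigcup\{F\,:\,g\in G_F\}$; and $G_{F'}\subseteq G_F$ whenever $F\subseteq F'$, so $W^{\tau_g}$ is discrete precisely when $Q_g$ contains no positive-dimensional face of $Q$, in which case $W^{\tau_g}$ is the disjoint union of the (one-point) orbits over the vertices of $Q$ lying in $Q_g$. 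I would prove the theorem in two stages: a counting stage showing that a regular $\mathrm{m}$-involution exists if and only if $W$ is equivariantly formal \emph{and} some $g\ne 0$ has $\tau_g$ discrete, and a combinatorial stage identifying this last condition with $\lambda_W$ realising a basis.

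\textbf{Counting stage.} Suppose $\tau_g$ is a regular $\mathrm{m}$-involution. As $W^{\tau_g}$ is discrete and non-empty, $Q$ has a vertex; and since $G_v=\Z^n_2\ni g$ for every vertex $v$ (the $n$ facets through $v$ have linearly independent $\lambda_W$-values), $Q_g$ is exactly the set of all vertices, so $|W^{\tau_g}|=V$, the number of vertices of $Q$; hence $V=\dim_{\Z_2}H^*(W;\Z_2)$. On the other hand $W^{\Z^n_2}=\pi^{-1}(\{\text{vertices}\})$ is likewise a set of exactly $V$ isolated points, so $\dim_{\Z_2}H^*(W^{\Z^n_2};\Z_2)=V$; by Theorem~\ref{thm:Hsiang} for the full $\Z^n_2$-action, $V=\dim_{\Z_2}H^*(W;\Z_2)$ means precisely that $W$ is $\Z^n_2$-equivariantly formal, i.e.\ (since $W$ is locally standard) that $Q$ is mod $2$ face-acyclic, by Theorem~\ref{thm:Main-1}(i). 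Conversely, if $Q$ is mod $2$ face-acyclic then $W$ is equivariantly formal and Theorem~\ref{thm:Hsiang} gives $\dim_{\Z_2}H^*(W;\Z_2)=V$; so any $g\ne 0$ with $\tau_g$ discrete is automatically a regular $\mathrm{m}$-involution, its fixed set being the $V$ points over the vertices. This reduces Theorem~\ref{thm:Main-2} to the claim that, \emph{when $Q$ is mod $2$ face-acyclic}, some $g\ne 0$ has $\tau_g$ discrete if and only if the values of $\lambda_W$ on the facets of $Q$ form a basis of $\Z^n_2$.

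\textbf{Combinatorial stage.} Assume $Q$ is mod $2$ face-acyclic, so each of its faces is a mod $2$ acyclic manifold with corners; in particular every edge of $Q$ is an arc with two vertex endpoints, every positive-dimensional face contains an edge, and the vertex-edge graph of $Q$ is connected. Fix a vertex $v$ with incident facets $F_1,\dots,F_n$, whose $\lambda_W$-values form a basis of $\Z^n_2$. The edge at $v$ not contained in $F_c$ has isotropy $\langle\lambda_W(F_l)\,:\,l\ne c\rangle$, so no edge at $v$ lies in $Q_g$ exactly when every coordinate of $g$ in the basis $\{\lambda_W(F_1),\dots,\lambda_W(F_n)\}$ equals $1$, i.e.\ when $g=\sum_{l=1}^n\lambda_W(F_l)$. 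As every edge sits at a vertex, $\tau_g$ is discrete if and only if $g=\sum_{F\ni v}\lambda_W(F)$ for \emph{every} vertex $v$. Comparing this identity at the two endpoints of an edge $E$, all terms cancel except the $\lambda_W$-values of the two facets opposite across $E$ (those not containing $E$ at the respective endpoints), which are thus forced to be equal; hence $\lambda_W$ is constant on each class of the equivalence relation on facets generated by ``opposite across an edge''. Since the vertex-edge graph is connected while the $n$ facets at any single vertex lie in $n$ distinct classes (their $\lambda_W$-values being distinct), there are exactly $n$ classes, so the values of $\lambda_W$ on all facets form a basis of $\Z^n_2$ and $g$ is their (necessarily unique, nonzero) sum. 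Conversely, if these values form a basis $\{e_1,\dots,e_n\}$, then $g:=e_1+\cdots+e_n\ne 0$ works: at any vertex the $n$ incident values span $\Z^n_2$, hence equal $\{e_1,\dots,e_n\}$, giving $g=\sum_{F\ni v}\lambda_W(F)$. Together with the counting stage this proves Theorem~\ref{thm:Main-2}.

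\textbf{Expected difficulty.} The one substantial step is the combinatorial passage from the vertexwise relation $g=\sum_{F\ni v}\lambda_W(F)$ to the global statement that $\lambda_W$ realises a basis of $\Z^n_2$; it rests on the ``opposite facets across an edge carry equal values'' cancellation together with connectedness of the vertex-edge graph of the (now polytope-like) orbit space $Q$. Everything else is bookkeeping: the identification $W^{\tau_g}=\pi^{-1}(Q_g)$, two applications of Theorem~\ref{thm:Hsiang} (for $\Z_2$ via \eqref{Equ:FixPointSet} and for $\Z^n_2$), and one of Theorem~\ref{thm:Main-1}(i). A small auxiliary fact used above is that a mod $2$ face-acyclic manifold with corners always has a vertex --- by induction on dimension, each of its facets being again mod $2$ face-acyclic with non-empty boundary.
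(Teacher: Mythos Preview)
Your proof is correct and follows essentially the same route as the paper's: both arguments identify $W^{\tau_g}$ with the preimage of the vertices when $\tau_g$ is discrete, deduce equivariant formality (hence mod~$2$ face-acyclicity via Theorem~\ref{thm:Main-1}) from $|W^{\tau_g}|=\dim_{\Z_2}H^*(W;\Z_2)$ together with $W^{\tau_g}=W^{\Z^n_2}$, and then run the edge-by-edge cancellation along the connected $1$-skeleton to force $\lambda_W$ to take values in a single basis. Your organisation into a ``counting stage'' and a ``combinatorial stage'' and your phrasing of the edge argument via the equivalence relation ``opposite across an edge'' are cosmetic repackagings of the paper's direct propagation argument, not a different method.
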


  A nice manifold with corners $Q$ is called \emph{$k$-colorable} if we can assign $k$ different colors
  to all the facets of $Q$ so that no two adjacent
  facets are of the same color.
 Clearly, there exists a $2$-torus manifold  
      over $Q$ whose characteristic function takes value in a linear basis of $\Z^n_2$ if and only if 
       $Q$ is $n$-colorable.

      \begin{rem}
    By Theorem~\ref{thm:Main-2} and the construction in Puppe~\cite{Puppe01}, we can obtain a self-dual binary code $\mathcal{C}_Q$ from an $n$-colorable mod $2$ face-acyclic nice smooth $n$-manifold with corners $Q$ when $n$ is odd. This generalizes the self-dual binary codes from $n$-colorable simple convex $n$-polytopes in Chen-L\"{u}-Yu~\cite{ChenLuYu18}. Moreover, we can write down $\mathcal{C}_Q$
  explicitly in the same way as the self-dual binary code obtained in~\cite[Corollary 4.5]{ChenLuYu18}. 
   \end{rem}

   The paper is organized as follows. In Section~\ref{Sec-Prelim}, we review the definitions and some basic facts of locally standard
   $2$-torus manifolds and quote some well known results that are useful for our proof.
   In Section~\ref{Sec-Proof-Main-1}, we study various properties of equivariantly formal $2$-torus manifolds. Since the philosophy of our study is very similar to the study
   of torus manifolds with vanishing odd degree cohomology in Masuda-Panov~\cite{MasPanov06}, many lemmas in this paper are parallel to those in~\cite{MasPanov06}.
  In Section~\ref{Sec:Equiv-Formal-Cohom-One}, we prove some special properties of
equivariantly formal $2$-torus manifolds whose mod $2$ cohomology rings are generated by their degree-one part.
 Then finally in Section~\ref{Sec:Proof-Thm-1},  we prove Theorem~\ref{thm:Main-1} and Theorem~\ref{thm:Main-2}.

   \section{Preliminaries} \label{Sec-Prelim}
   
   \subsection{Manifolds with corners and locally standard $2$-torus manifolds} \label{Subsec:Mfd-corners} \ \n
     
  Recall a (smooth) \emph{$n$-dimensional manifold with corners} $Q$ is a
   Hausdorff space together with a maximal
  atlas of local charts onto open subsets of $\R_{\geq 0}^n $
  such that the transition functions are (diffeomorphisms) homeomorphisms which preserve the codimension of each point.
  Here the \emph{codimension} $c(x)$ of a point $x=(x_1,\cdots,x_n)$ in $\R_{\geq 0}^n$ is the number of
  $x_i$ that is $0$. So we have a well defined map
  $c: Q\rightarrow \Z_{\geq 0}$ where $c(q)$ is the codimension of a point $q\in Q$. 
  An \emph{open face} of $Q$ of codimension $k$
  is a connected component of $c^{-1}(k)$. A (closed) 
  \emph{face}
  is the closure of an open face. A face of codimension one is called a \emph{facet} of $Q$. When $Q$ is connected, we also consider $Q$ itself as a face (of codimension zero).
  \begin{itemize}
  \item
  For any $k\in \Z_{\geq 0}$, the \emph{$k$-skeleton} of $Q$ is the union of all the faces of $Q$ with dimension $\leq k$.\n
  \item  
   Th \emph{face poset} of $Q$, denoted by $\mathcal{P}_Q$, is the set of faces of $Q$ ordered by reversed inclusion (so $Q$ is the initial element).
   \end{itemize}
  \n
   
    A manifold with corners $Q$ is said to be \emph{nice} if either its boundary $\partial Q$ is empty or
  $\partial Q$ is non-empty and any codimension-$k$ face of $Q$ is a component of the intersection of
  $k$ different facets in $Q$. If $Q$ is nice,
$\mathcal{P}_Q$ is a simplicial poset. But
in general $\mathcal{P}_Q$
may not be the face poset of a simplicial complex.
Indeed, $\mathcal{P}_Q$ is the face poset of a simplicial complex if and only if all non-empty
multiple intersections of facets of $Q$ are connected (see~\cite[Sec.\,5.2]{MasPanov06}).
\n

\begin{defi} \label{Def:polytope}
 
Let $Q$ be a nice manifold with corners.
 \begin{itemize}
 \item We call $Q$ \emph{mod $2$ face-acyclic} if every face of $Q$ (including $Q$ itself) is a mod $2$ acyclic space. \n

\item We call $Q$ a \emph{mod $2$ homology polytope} if $Q$ is mod $2$ face-acyclic and $\mathcal{P}_Q$
is the face poset of a simplicial complex.
   \end{itemize}
  \end{defi}
  
A topological space $B$ is called \emph{mod $2$ acyclic if $H^*(B;\Z_2)\cong H^*(pt;\Z_2)$.}\n

    It is not difficult to prove the following lemma 
    (see~\cite[p.743\ Remark]{MasPanov06} for a short argument).
    
    \begin{lem} \label{Lem:1-Skeleton-Connect}
    If $Q$ is mod $2$
face-acyclic, then every face of $Q$ has a vertex and the $1$-skeleton of $Q$ is connected.
\end{lem}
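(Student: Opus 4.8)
\emph{Overview of the approach.} The plan is to prove the two assertions by induction, establishing the existence of vertices first and then deducing connectedness of the $1$-skeleton by a second induction on $\dim Q$. Throughout I use that $Q$ (and hence each of its faces) is compact, that a face of a nice manifold with corners is again a nice manifold with corners, that the faces of a closed face $F$ of $Q$ are exactly the faces of $Q$ contained in $F$, and that every positive-codimension face of $Q$ is contained in some facet; these are standard facts about nice manifolds with corners.

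\emph{Every face has a vertex.} Given a face $F$ of $Q$, I would induct on $d=\dim F$. For $d=0$ the face is a single point and there is nothing to prove. For $d\ge 1$ the key point is that $\partial F\ne\emptyset$: otherwise $F$ would be a closed connected $d$-manifold, forcing $H^d(F;\Z_2)\cong\Z_2$ and contradicting the fact that $F$ is mod $2$ acyclic. Hence $F$ has a facet $G$; since $Q$ is nice, $G$ is again a face of $Q$, of dimension $d-1$, so $G$ is mod $2$ acyclic and by the inductive hypothesis contains a vertex, which is then a vertex of $F$ (and of $Q$).

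\emph{The $1$-skeleton is connected.} Now I would induct on $n=\dim Q$ (writing $X^{(1)}$ for the $1$-skeleton of a nice manifold with corners $X$). The case $n\le 1$ is immediate, since a connected mod $2$ acyclic compact $1$-manifold with corners is an arc, whose $1$-skeleton is all of it. For $n\ge 2$, the top-degree cohomology argument again gives $\partial Q\ne\emptyset$; let $F_1,\dots,F_m$ be the facets of $Q$. Each $F_i$ is a nice compact manifold with corners of dimension $n-1$ all of whose faces are faces of $Q$, hence mod $2$ acyclic, so by the inductive hypothesis each $F_i^{(1)}$ is connected; and since every face of $Q$ of dimension $\le 1$ has positive codimension, hence lies in some facet, we get $Q^{(1)}=\bigcup_i F_i^{(1)}$. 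To glue these connected pieces together I would first show that $\partial Q$ is connected: from the long exact sequence of the pair $(Q,\partial Q)$, the Lefschetz duality isomorphism $H^k(Q,\partial Q;\Z_2)\cong H_{n-k}(Q;\Z_2)$, and the mod $2$ acyclicity of $Q$ (with $n\ge 2$), one obtains $H^0(\partial Q;\Z_2)\cong H^0(Q;\Z_2)=\Z_2$. Connectedness of $\partial Q$ then forces the facet intersection graph (vertices $F_1,\dots,F_m$, with an edge for each pair $F_i\cap F_j\ne\emptyset$) to be connected, since otherwise $\partial Q$ would split as a disjoint union of unions of facets. Finally, given two vertices $v,w$ of $Q$, choose facets containing them and a path joining those facets in the intersection graph; each successive intersection $F_i\cap F_j$ is a nonempty union of codimension-two faces of $Q$, which by the first part contains a vertex, and concatenating paths through these vertices inside the connected sets $F_i^{(1)}$ produces a path from $v$ to $w$ in $Q^{(1)}$. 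Since every edge of $Q$ is a connected arc containing its two vertices, it follows that all of $Q^{(1)}$ is connected.

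\emph{Main obstacle.} I expect the identification of $\partial Q$ as connected to be the only substantive step; everything else is bookkeeping around the two inductions. The one point that requires a little care is the mod $2$ Lefschetz duality for a manifold with corners, which I would justify by smoothing the corners of $Q$ to reduce to the classical statement for compact manifolds with boundary (the smoothing being a homeomorphism carrying $\partial Q$ to the boundary).
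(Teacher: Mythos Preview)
Your argument is correct. The paper does not actually give a proof of this lemma; it only remarks that the lemma is not difficult and points to \cite[p.\,734, Remark]{MasPanov06} for a short argument. Your two inductions---using mod $2$ Poincar\'e duality to force $\partial F\ne\varnothing$ for the existence of vertices, and Lefschetz duality plus the long exact sequence of $(Q,\partial Q)$ to get connectedness of $\partial Q$ for the $1$-skeleton---are exactly the standard route and are in the spirit of what the cited reference does. The only comment worth making is that your justification of Lefschetz duality via corner-smoothing is fine but slightly heavier than necessary: a nice manifold with corners is already a topological manifold with boundary, so the mod $2$ Lefschetz duality applies directly without any smoothing.
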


  In the following, let $W$ be an $n$-dimensional locally standard $2$-torus manifold with orbit space $Q$. Then $Q$ is a smooth nice manifold with corners with $\partial Q\neq \varnothing$. 
   Let $\pi: W\rightarrow Q$ denote the projection and let the set of facets of $Q$ be
  $$\mathcal{F}(Q) =\{F_1,\cdots, F_m \}.$$ 
  
    Then
   $\pi^{-1}(F_1),\cdots,\pi^{-1}(F_m)$ are embedded codimension-one closed connected submanifolds of $W$, called the \emph{characteristic submanifolds} of $W$. Moreover,
   the $\Z^n_2$-action on $W$ determines a \emph{characteristic function} on $Q$ which is a map
   \begin{equation} \label{Equ:Char-Func}
    \lambda_W: \mathcal{F}(Q)\rightarrow \Z^n_2 
    \end{equation}  
    where $\lambda_W(F_i)\in \Z^n_2$ is the generator of the $\Z_2$ subgroup that pointwise fixes the submanifold $\pi^{-1}(F_i)$, $1\leq i \leq m$. Since the $\Z^n_2$-action is locally standard, the function
 $\lambda_W$ satisfies the following \emph{linear independence condition}: 
     \begin{equation*} 
  \quad \begin{array}{l}
        \text{whenever the intersection of $k$ different facets $F_{i_1},\cdots, F_{i_k}$ is non-empty,} \\
      \text{the elements $\lambda_W(F_{i_1}),\cdots , \lambda_W(F_{i_k})$ are linearly independent when viewed} \\
     \text{as vectors of $\Z^n_2$ over the field $\Z_2$.}    \end{array}
   \end{equation*}
   
   For a codimension-$k$ face $f$ of $Q$, let
 $F_{i_1},\cdots, F_{i_k}$ be all the facets containing $f$. Then the isotropy subgroup of the facial submanifold $W_f$ is
   \begin{equation} \label{Equ:Isotropy-Sub-Mfd}
      G_f = \text{the subgroup generated by $\{\lambda_W(F_{i_1}),\cdots , \lambda_W(F_{i_k})\}$} \subseteq \Z^n_2.
      \end{equation} \n 
   
   By the linear independence condition of $\lambda_W$, $G_f\cong \Z^k_2$. Hence $W_f$ is also a $2$-torus manifold with respect to the induced action of $\Z^n_2\slash G_f \cong \Z^{n-k}_2$.\n
   
    In addition, $W$ determines a \emph{principal $\Z^n_2$-bundle} over $Q$ as follows. We take a small invariant open tubular
neighborhood for each characteristic submanifold of $W$ and remove their
union from $W$. Then the $\Z^n_2$-action on the resulting space is free and
its orbit space can naturally be identified with $Q$, which gives a principal $\Z^n_2$-bundle over $Q$, denoted by $\xi_W$. It is shown in L\"{u}-Masuda~\cite{MaLu08} that
$W$ can be recovered (up to equivariant homeomorphism) from ($Q,
\xi_W,\lambda_W$). For example,
  when $\xi_W$ is a trivial $\Z^n_2$-bundle, $W$ is equivariantly homeomorphic to the following ``\emph{canonical model}'' determined by $(Q,\lambda_W)$.
    \begin{equation} \label{Equ-Glue-Const-Z2}
      M_Q(\lambda_W) := Q\times \Z^n_2 \slash \sim  
     \end{equation} 
    where $(q,g)\sim (q',g')$ if and only if $q=q'$ and
    $g-g'\in G_{f(q)}$ where $f(q)$ is the unique face
    of $Q$ that contains $q$ in its relative interior. 
   This canonical model is a generalization of a result of Davis-Januszkiewicz~\cite[Prop.\,1.8]{DaJan91}.
   We will see that the canonical model plays an important role in our proof of Theorem~\ref{thm:Main-1} in Section~\ref{Sec:Proof-Thm-1}.  
    
     \subsection{Borel construction and equivariant cohomology}
\ \n
  
  For a topological group $G$, there exists
a contractible free right $G$-space $EG$ called the \emph{universal $G$-space}. The quotient
$BG= EG \slash G$ is called the \emph{classifying space} for free $G$-actions.  For example when $G=\Z_2^n$, we can choose
 $$E\Z^n_2 = (E\Z_2)^n = (S^{\infty})^n, \ \ B\Z_2^n = (B\Z_2)^n = (\R P^{\infty})^n.$$
  
  Let $X$ be a topological space with a left $G$-action (we call $X$ a \emph{$G$-space} for brevity). The \emph{Borel construction} of $X$ is denoted by
  $$  EG\times_G X = 
     EG \times X  \slash \sim $$
    where $(e,x)\sim (eg,g^{-1}x)$ for any $e\in 
    EG$, $x\in X$ and $g\in G$.\n

The \emph{equivariant cohomology} of $X$ with coefficients in a field $\mathbf{k}$ is defined as
   $$H^*_G(X;\mathbf{k}) := H^*(EG\times_G X;\mathbf{k}) .$$
  
\noindent  \textbf{Convention:}  
The term ``cohomology'' of a space in this paper,  always mean singular cohomology if not specified otherwise.\n

 The Borel construction determines a canonical fibration called \emph{Borel fibration}:
    \begin{equation} \label{Equ:Borel-Fiber-Bundle}
       X \rightarrow EG\times_G X \rightarrow BG. 
     \end{equation}

 The map $\rho$ collapsing $X$ to a point induces a homomorphism
 \begin{equation} \label{Equ:rho-star}
   \rho^* : H^*_G(pt ;\mathbf{k}) = H^*(BG;\mathbf{k})
\rightarrow H^*_G(X;\mathbf{k})  
\end{equation}
 which defines a canonical $H^*(BG;\mathbf{k})$-module structure on $H^*_G(X;\mathbf{k})$. A useful fact is:
 when $X$ is a paracompact space with finite cohomology dimension, and $G=T^r$ or $(\Z_p)^r$ where $p$ is a prime
and $\mathbf{k}=\Q$ or $\Z_p$ respectively, $\rho^*$ is injective if and only if the fixed point set $X^G$ is non-empty (see~\cite[Ch.\,IV]{Hsiang75}). \n

In general, $H^*_G(X;\mathbf{k})$ may not be a free $H^*(BG;\mathbf{k})$-module. The following \emph{localization theorem} due to A.~Borel (see~\cite[p.\,45]{Hsiang75}) says that we can compute the free $H^*(BG;\mathbf{k})$-module part of $H^*_G(X;\mathbf{k})$ by restricting to  the fixed point set. 

\begin{thm}[Localization Theorem] \label{Thm:Local-Borel}
  Let $G=T^r$ or $(\Z_p)^r$ where $p$ is a prime
and $\mathbf{k}=\Q$ or $\Z_p$ respectively. For a paracompact
$G$-space $X$ with finite cohomology dimension, the following localized restriction homomorphism is an isomorphism:
\[  S^{-1} H^*_G(X;\mathbf{k}) \rightarrow S^{-1} H^*_G(X^G;\mathbf{k}) = H^*(X^G;\mathbf{k})\otimes_{\mathbf{k}} (S^{-1} H^*(BG;\mathbf{k})) \]
where $S=R-\{0\}$ and $R$ is the polynomial subring of $H^*(BG;\mathbf{k})$. So the kernel of the restriction
$H^*_G(X;\mathbf{k}) \rightarrow  H^*_G(X^G;\mathbf{k})$
lies in the $H^*(BG;\mathbf{k})$-torsion of
$H^*_G(X;\mathbf{k})$. In particular if $X$ is 
equivariantly formal, $H^*_G(X;\mathbf{k})\rightarrow  H^*_G(X^G;\mathbf{k})$ is injective.
\end{thm}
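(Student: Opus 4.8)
The plan is to run Borel's classical argument, as in Hsiang~\cite[Ch.\,IV]{Hsiang75}, reducing the statement to a computation on a single orbit. Since the localization functor $S^{-1}(-)$ is exact on $H^*(BG;\mathbf{k})$-modules, applying it to the long exact sequence of the pair $(X,X^G)$ in equivariant cohomology shows that $S^{-1}H^*_G(X;\mathbf{k})\to S^{-1}H^*_G(X^G;\mathbf{k})$ is an isomorphism provided $S^{-1}H^*_G(X,X^G;\mathbf{k})=0$. The target is identified as in the statement because $G$ acts trivially on $X^G$, so $EG\times_G X^G=BG\times X^G$; the K\"unneth theorem over the field $\mathbf{k}$ (valid since $H^*(BG;\mathbf{k})$ has finite type) gives $H^*_G(X^G;\mathbf{k})\cong H^*(X^G;\mathbf{k})\otimes_{\mathbf{k}}H^*(BG;\mathbf{k})$ as $H^*(BG;\mathbf{k})$-modules, and localization commutes with $-\otimes_{\mathbf{k}}H^*(X^G;\mathbf{k})$. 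Finally, a filtration argument on the pair $(X,X^G)$ --- every orbit type occurring in $X\setminus X^G$ has isotropy a proper subgroup of $G$ --- reduces the vanishing of $S^{-1}H^*_G(X,X^G;\mathbf{k})$ to showing $S^{-1}H^*_G(G/H;\mathbf{k})=0$ whenever $H\subsetneq G$.

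This last point is the heart of the matter. One has $H^*_G(G/H;\mathbf{k})=H^*(BH;\mathbf{k})$, with the $H^*(BG;\mathbf{k})$-module structure induced by the map $BH\to BG$. Since $H$ is a proper subgroup, choose an epimorphism $\varphi\colon G\twoheadrightarrow S^1$ if $G=T^r$, or $\varphi\colon G\twoheadrightarrow\Z_p$ if $G=(\Z_p)^r$, whose kernel contains $H$. Pulling back a polynomial generator of $H^*(BS^1;\Q)$, resp. of $H^*(B\Z_p;\Z_p)$, along $B\varphi$ gives an element $s$ of $H^*(BG;\mathbf{k})$ that is nonzero (because $\varphi$ splits) and lies in the polynomial part of $H^*(BG;\mathbf{k})$, so $s\in S$; moreover the restriction of $s$ to $H^*(BH;\mathbf{k})$ is $0$, since the composite $BH\to BG\to BS^1$ (resp. $BH\to BG\to B\Z_p$) is null-homotopic. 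As $s$ then annihilates the module $H^*(BH;\mathbf{k})$, every element of $S^{-1}H^*(BH;\mathbf{k})$ vanishes, i.e. $S^{-1}H^*_G(G/H;\mathbf{k})=0$. (For odd $p$ this is why $S$ must be taken inside the polynomial subring of $H^*(BG;\Z_p)$, which otherwise has nilpotents.)

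It remains to globalise from one orbit to the pair $(X,X^G)$. For a finite $G$-CW complex $Y$ with $Y^G=\varnothing$ one inducts over the equivariant skeleta --- each cell is $G/H\times D^k$ with $H\neq G$ --- combining the orbit computation with the Mayer--Vietoris sequences, which stay exact after applying $S^{-1}$, and the five lemma. For a general paracompact $Y$ of finite cohomological dimension one uses the filtration by orbit types together with a \v{C}ech-type limit argument, the finiteness of the cohomological dimension keeping the limits under control and $S^{-1}$ commuting with them. This establishes $S^{-1}H^*_G(X,X^G;\mathbf{k})=0$, hence the desired isomorphism. The remaining assertions are then formal: the kernel of $H^*_G(X;\mathbf{k})\to H^*_G(X^G;\mathbf{k})$ becomes $0$ in $S^{-1}H^*_G(X;\mathbf{k})$, so it consists of $H^*(BG;\mathbf{k})$-torsion elements; and if $X$ is equivariantly formal then $H^*_G(X;\mathbf{k})$ is a free $H^*(BG;\mathbf{k})$-module by Theorem~\ref{thm:standard}, in particular torsion-free, so that kernel vanishes and the restriction to $X^G$ is injective.

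The step I expect to be the main obstacle is the last one --- globalising to an arbitrary paracompact $G$-space of finite cohomological dimension rather than a finite $G$-CW complex: one must set up equivariant \v{C}ech cohomology carefully, deal with a possibly complicated fixed set $X^G$, and check that inverting $S$ commutes with the relevant limits. For the $2$-torus manifolds considered in this paper this difficulty disappears, since such a manifold has the homotopy type of a finite CW complex and its fixed set is a finite set of points, so the elementary version of the argument already suffices.
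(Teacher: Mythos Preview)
The paper does not prove this theorem; it is quoted as a classical result due to Borel, with a citation to Hsiang~\cite[p.\,45]{Hsiang75}, and no argument is given. Your sketch is the standard Borel--Hsiang proof and is essentially correct for the purposes of this paper, where $G=\Z_2^r$ and $X$ is a closed smooth manifold (hence a finite $G$-CW complex with finite fixed set), so the elementary skeletal induction you describe already suffices and the delicate limit issues you flag in the final paragraph do not arise.

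One small point worth noting: you correctly observe that for odd $p$ the ring $H^*(B\Z_p^r;\Z_p)$ contains nilpotent exterior classes, so the multiplicative set $S$ must be taken in the polynomial subring (or at least avoid nilpotents) for the localization to be nontrivial. The paper's statement literally says ``$S$ is the set of non-zero elements of $H^*(BG;\mathbf{k})$'', which is harmless for $G=T^r$ or $G=\Z_2^r$ (the only cases actually used) but would make $S^{-1}$ identically zero for odd $p$. Your parenthetical remark is therefore a genuine clarification of the statement rather than merely of the proof.
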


  The Borel construction
can also be applied to a $G$-vector bundle
$\pi: E\rightarrow X$ (i.e. both $E$ and $X$ are 
$G$-spaces and the projection $\pi$ is $G$-equivariant).
 In this case, the Borel construction $E_G$ of $E$ is a
vector bundle over $X_G$ whose mod $2$ Euler class, denoted by $e^G(E)$, lies in $H^*_G(X;\Z_2)$.
Note that using $\Z_2$-coefficients allows us to ignore the orientation of a vector bundle.

    \subsection{Mod $2$ GKM-theory} \label{Subsec:mod2-GKM}
    \ \n
    
 Let $W$ be an $n$-dimensional equivariantly formal $2$-torus manifold. Then the fixed point set $W^{\Z^n_2}$ is
 a finite non-empty set (by Theorem~\ref{thm:AllPuppe}), and $H^*_{\Z^n_2}(W;\Z_2)$ is a free module over $H^*(B\Z^n_2;\Z_2)$. Moreover, $H^*_{\Z^n_2}(W;\Z_2)$ can be computed 
    by the so called Mod $2$ GKM-theory (see 
    Biss-Guillemin-Holm~\cite{BisGuiHolm04}) which is an extension of the GKM-theory in~\cite{GKM98} to $2$-torus actions. In this section, we briefly review some results related to our study. The reader is referred to~\cite{BisGuiHolm04} and~\cite{Lu05} for more details.\n
    
    For each $1\leq i \leq n$, let $\rho_i\in
\mathrm{Hom}(\Z^n_2,\Z_2)$ be the homomorphism defined by
$$ \rho_i((g_1,\cdots, g_n))=g_i,\ \forall 
(g_1,\cdots, g_n)\in \Z^n_2. $$ 

 By a canonical isomorphism $\mathrm{Hom}(\Z^n_2,\Z_2)\cong H^1(B\Z^n_2;\Z_2)$,
 we can identify $H^*(B\Z^n_2;\Z_2)$ with 
  the graded polynomial ring $\Z_2[\rho_1,\cdots,\rho_n]$
  where $\mathrm{deg}(\rho_i)=1$, $1\leq i \leq n$.\n
 
 Let $Q=W\slash \Z^n_2$ be the orbit space of $W$.
 By our Theorem~\ref{Thm:Locally-Std} proved later, a $2$-torus manifold $W$ being equivariantly formal implies that it is locally standard. Hence $Q$ is a nice manifold with corners. Then the $1$-skeleton of $Q$, consisting of vertices
(0-faces) and edges (1-faces) of $Q$, is an $n$-valent graph denoted by $\Gamma(Q)$. Let $V(Q)$ and $E(Q)$ denote the set of vertices and edges of $Q$, respectively.\n

\noindent \textbf{Convention:}
We will not distinguish a vertex of $Q$
and the corresponding fixed point in $W^{\Z^n_2}$
in the rest of the paper. \n

 \begin{itemize}
 \item Let $\pi: W\rightarrow Q$ be the quotient map.\n
 \item
  For each edge $e\in E(Q)$, $\pi^{-1}(e)$ is a circle whose isotropy subgroup $G_e$ is a rank $n-1$ subgroup of $\Z^n_2$.
Then we obtain a map
\[ \alpha: E(Q) \rightarrow \mathrm{Hom}(\Z^n_2,\Z_2)\cong H^1(B\Z^n_2;\Z_2)  \]
   where for each edge $e\in E(Q)$, $\ker(\alpha(e))=G_e$.
     \n
  \item 
   For each vertex $p\in V(Q)$, let $\alpha_p = \{ \alpha(e)\,|\, p\in e \}\subset \mathrm{Hom}(\Z^n_2,\Z_2) $.\n
 \end{itemize}

 Such a map $\alpha$ is called an \emph{axial function} which has the following properties:
   \begin{itemize}
     \item[(i)] For every vertex $p\in V(Q)$, $\alpha_p$ is a linear basis of $\mathrm{Hom}(\Z^n_2,\Z_2)$.\n
     
     \item[(ii)] For every edge $e\in E(Q)$, $\alpha_p \equiv \alpha_{p'}$ mod $\alpha(e)$ where $p,p'$ are the two vertices of $e$.
   \end{itemize}

   By~\cite[Theorem C]{BisGuiHolm04} and~\cite[Remark 5.9]{BisGuiHolm04}, we have the following theorem which is a consequence of the $\Z_2$-version Chang–Skjelbred
 theorem (see~\cite[Theorem 4.1]{BisGuiHolm04} 
 and~\cite{ChangSkj74}).
 
    \begin{thm}[see~\cite{BisGuiHolm04}]\label{thm:compute}
    Let $W$ be an $n$-dimensional equivariantly formal  $2$-torus manifold.
    If we choose an element $\eta_p \in H^*_{\Z^n_2}(W^{\Z^n_2};\Z_2)$ for each $p\in W^{\Z^n_2}$,
    then 
    $$(\eta_p)\in \bigoplus_{p\in W^{\Z^n_2}} 
    H^*(B\Z^n_2;\Z_2) \cong H^*_{\Z^n_2}(W^{\Z^n_2};\Z_2)$$
     is in the image of the restriction homomorphism
   $r: H^*_{\Z^n_2}(W;\Z_2)  \rightarrow H^*_{\Z^n_2}(W^{\Z^n_2};\Z_2)$ if and only if
   for every edge $e\in E(Q)$ with vertices $p$ and $p'$,
   $\eta_p - \eta_{p'}$ is divisible by $\alpha(e)$.
    \end{thm}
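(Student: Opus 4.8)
The plan is to derive the theorem from the mod $2$ Chang--Skjelbred theorem (Biss--Guillemin--Holm~\cite[Theorem~4.1]{BisGuiHolm04}), following the classical GKM argument. Since $W$ is equivariantly formal it is locally standard by Theorem~\ref{Thm:Locally-Std}, so $Q$ is a nice manifold with corners and all the data $\Gamma(Q)$, $V(Q)$, $E(Q)$, $\alpha$ are defined. Moreover, by Theorem~\ref{thm:standard} the module $H^*_{\Z^n_2}(W;\Z_2)$ is free over $H^*(B\Z^n_2;\Z_2)$, and by the Localization Theorem~\ref{Thm:Local-Borel} the restriction $r$ is injective; so the real content of the statement is the identification of $\mathrm{im}(r)$ inside $\bigoplus_{p\in V(Q)}H^*(B\Z^n_2;\Z_2)$.

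The first step is Chang--Skjelbred: for an equivariantly formal $\Z^n_2$-space of finite cohomology dimension, $\mathrm{im}(r)$ coincides with the image of $H^*_{\Z^n_2}(W_1;\Z_2)$ in $H^*_{\Z^n_2}(W^{\Z^n_2};\Z_2)$, where $W_1=\{x\in W:\mathrm{rank}(G_x)\ge n-1\}=\bigcup_{\mathrm{rank}(H)=n-1}W^H$. Using local standardness I would check that $W_1=\pi^{-1}(\Gamma(Q))$ and, more precisely, that for each corank-one subgroup $H\subset\Z^n_2$ the set $W^H$ is a closed $1$-manifold, each of whose components is $\pi^{-1}(\bar e)$ for a unique edge $e\in E(Q)$ with $G_e=H$, the two fixed points lying in that component being exactly the two vertices of $e$. (Local standardness forces $\dim W^H\le 1$ at every point of $W^H$; and a component of $W^H$ cannot be disjoint from $W^{\Z^n_2}$, because its image in $Q$ is a sub-$1$-manifold of the finite graph $\Gamma(Q)$, hence contains a vertex.) Consequently the image of $H^*_{\Z^n_2}(W_1;\Z_2)$ in $\bigoplus_p H^*(B\Z^n_2;\Z_2)$ is the set of tuples $(\eta_p)$ such that, for every edge $e$ with vertices $p,p'$, the pair $(\eta_p,\eta_{p'})$ lies in the image of the restriction $H^*_{\Z^n_2}(\pi^{-1}(\bar e);\Z_2)\to H^*(B\Z^n_2;\Z_2)^{\oplus 2}$.

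It then remains to compute that last image, which is the heart of the matter. I would cover $\pi^{-1}(\bar e)\cong S^1$ by two $\Z^n_2$-invariant arcs $U_p,U_{p'}$, symmetric under the residual reflection and equivariantly deformation retracting onto $p$ and $p'$ respectively, whose intersection $U_p\cap U_{p'}$ is equivariantly a single free orbit $\Z^n_2/G_e$. The Mayer--Vietoris sequence of the Borel construction then identifies the image of $H^*_{\Z^n_2}(\pi^{-1}(\bar e);\Z_2)$ in $H^*(B\Z^n_2;\Z_2)^{\oplus 2}$ with the kernel of the difference-of-restrictions map
\[
H^*(B\Z^n_2;\Z_2)^{\oplus 2}\longrightarrow H^*_{\Z^n_2}(\Z^n_2/G_e;\Z_2)=H^*(BG_e;\Z_2),\qquad (a,b)\longmapsto \bar a-\bar b .
\]
Here $E\Z^n_2\times_{\Z^n_2}(\Z^n_2/G_e)$ is a model for $BG_e$, and since $G_e=\ker\alpha(e)$ has corank one the restriction $H^*(B\Z^n_2;\Z_2)\to H^*(BG_e;\Z_2)$ is exactly the quotient by the ideal $(\alpha(e))$; hence the kernel is $\{(a,b):a-b\in(\alpha(e))\}=\{(a,b):\alpha(e)\mid a-b\}$. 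Feeding this back, the condition defining $\mathrm{im}(r)$ becomes precisely ``$\alpha(e)\mid\eta_p-\eta_{p'}$ for every edge $e$ with vertices $p,p'$'', which is the assertion. The step I expect to require the most care is making sure the Chang--Skjelbred input is applied correctly --- that finite cohomology dimension and equivariant formality are genuinely used, and that nothing is lost or gained by restricting to the corank-one strata --- together with the verification that every component of each $W^H$ already contains a fixed point, so that no spurious constraint is missed and none is added.
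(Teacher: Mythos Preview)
Your approach is exactly the one the paper points to: the paper does not give its own proof of this theorem but cites it from~\cite[Theorem~C, Remark~5.9]{BisGuiHolm04} and states that it is a consequence of the $\Z_2$-version Chang--Skjelbred theorem~\cite[Theorem~4.1]{BisGuiHolm04}. Your proposal supplies precisely the standard GKM derivation from Chang--Skjelbred, so there is nothing to compare---you are filling in the details that the paper outsources to the reference. One small remark: your justification that every component of $W^H$ meets $W^{\Z^n_2}$ (``its image in $Q$ is a sub-$1$-manifold of the finite graph $\Gamma(Q)$, hence contains a vertex'') tacitly assumes no edge of $Q$ is a circle without endpoints; the cleaner route is to invoke Lemma~\ref{Lem:Induced-Formality}\,(iii), which directly guarantees a $\Z^n_2$-fixed point in each component.
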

    
  Moreover, we can understand the above axial function $\alpha$ in the following way. 
   For brevity, we use the following notations for an $n$-dimensional locally standard $2$-torus manifold $W$
   in the rest of this section. 
  
   \begin{itemize}
    \item Let $G=\Z^n_2$.\n
    
   \item Let $W_i:=W_{F_i}=\pi^{-1}(F_i)$, $1\leq i \leq m$, be all the characteristic submanifolds of $W$ where $F_1,\cdots, F_m$ are all the facets of $Q$.\n
   
  \item Let $G_i:=\langle\lambda_W(F_i)\rangle\cong \Z_2$ be subgroup of $G$ that fixes $W_i$ pointwise.
  \n
  
   \item Let $\nu_i$ be 
  the (equivariant) normal bundle of $W_i$ in $W$. So we have the \emph{equivariant Euler class} of $\nu_i$, denoted by $e^G(\nu_i)\in H^1_G(W_i;\Z_2)$. \n
  
  \item For any fixed point $p\in W^{\Z^n_2}$, let 
  $ I(p):=\{ i \,|\, p\in W_i \}$.  We have the decomposition of tangent space $T_p W$ as
  \[  T_p W = \bigoplus_{i\in I(p)} \nu_i|_p. \]
   where $\nu_i|_p$ denotes the restriction of $\nu_i$ to $p$. So $\nu_i|_p$ is a $1$-dimensional linear representation of $G$ whose
  equivariant Euler class 
  $$e^G(\nu_i|_p) = e^G(\nu_i)|_p\in H^1(B\Z^n_2;\Z_2).$$ \n
 \end{itemize}

   The inclusion map $\psi_i: W_i\hookrightarrow W$ defines an equivariant Gysin homomorphism $\psi_{i_{!}}: H^*_{G}(W_i;\Z_2)\rightarrow
  H^{*+1}_{G}(W;\Z_2)$ (see~\cite[\S 5.3]{AllPuppe93} for example). For brevity, let 
  $$\tau_i =\tau_{F_i}= \psi_{i_{!}}(1)\in H^1_{G}(W;\Z_2)$$
   be the image of the identity $1 \in H^0_{G}(W_i;\Z_2)$. The element $\tau_i$ can be thought of as the Poincar\'e dual of the Borel construction of $W_i$ in $H^*_{G}(W;\Z_2)$ and is called the \emph{equivariant Thom class} of $\nu_i$. A standard fact is
  \[ \text{$\tau_i|_p$ agrees with the equivariant Euler 
  class of $\nu_i|_p$}. \]

  Note that the elements of $\mathrm{Hom}(\Z^n_2,\Z_2)$ are in one-to-one correspondence with all
  the $1$-dimensional linear representations of $\Z^n_2$.
  So the canonical
isomorphism between $\mathrm{Hom}(\Z^n_2,\Z_2)$ and
$H^1(B\Z^n_2;\Z_2)$ is given by the equivariant Euler class of a $1$-dimensional representations of $\Z^n_2$. 
Then we have the following identification:
\begin{equation} \label{Equ:Weight-Identify}
\alpha_p = \{ \alpha(e)\,|\, p\in e \} \longleftrightarrow \{  e^G(\nu_i)|_p=\tau_i|_p\,;\, i\in I(p) \}.
\end{equation}
 where an edge $e$ containing $p$ corresponds to the unique index $i\in I(p)$ so that the facet $F_i$ intersects $e$ transversely (or equivalently $e \nsubseteq F_i$).\n
 
  \begin{itemize}
   \item For a codimension-$k$ face $f$ of $Q$,
  let $\nu_f$ denote the (equivariant) normal bundle of $W_f$ in $W$. Denote by $\tau_f \in H^k_G(W;\Z_2)$ the equivariant Thom class of $\nu_f$. Then the restriction of $\tau_f$ to
  $H^k_G(W_f;\Z_2)$ is the equivariant Euler class of $\nu_f$, denoted by $e^G(\nu_f)$. In particular, if $f=Q$, $W_f=W$ and so $\tau_f$ is the identity element of $H^0_G(W_f;\Z_2)$. 
  \end{itemize}
 \n
 
   Let $r_p: H^*_G(W;\Z_2)\rightarrow H_G^*(p;\Z_2)\cong H^*(BG;\Z_2)$ denote the restriction map at a fixed point $p\in W^G$.  Then 
  \begin{equation} \label{Equ:Local-map}
    r=\bigoplus_{p\in W^G} r_p : H^*_G(W;\Z_2)\rightarrow
     H^*_G(W^G;\Z_2) = \bigoplus_{p\in W^G} H^*(BG;\Z_2). 
    \end{equation}  
  By Theorem~\ref{Thm:Local-Borel}, the kernel of $r$ is the $H^*(BG;\Z_2)$-torsion subgroup of $H^*_G(W;\Z_2)$.\n

  Clearly, $r_p(\tau_f) =0$ unless $p\in (W_f)^G$ (i.e. $p$ is a vertex of $f$).
  It follows from~\eqref{Equ:Weight-Identify} that
  for any $p\in W^G$,
  \begin{equation} \label{Equ:res-tau}
  r_p(\tau_f)=   \begin{cases}
   \underset{p\in e,\, e\nsubseteq f}{\prod} \alpha(e),  &  \text{if  $p \in f$}; \\
  \ \ 0,  &  \text{otherwise}.
 \end{cases} 
  \end{equation}
  
 In addition, define
    \begin{equation} \label{Equ:wideHat-H}
      \widehat{H}^*_G(W;\Z_2):= H^*_G(W;\Z_2) \big\slash H^*(BG;\Z_2)\text{-torsion}.
    \end{equation}
  
  By the localization theorem (Theorem~\ref{Thm:Local-Borel}), the restriction homomorphism $r$ induces a monomorphism 
  $ \widehat{H}^*_G(W;\Z_2) \rightarrow H^*_G(W^G;\Z_2)$, still denoted by $r$.\nn
  
    The following proposition is parallel to~\cite[Proposition 3.3]{MasPanov06}.
    \begin{prop} \label{Prop:Equiv-Struc}
      Let $W$ be an $n$-dimensional locally standard $2$-torus manifold.
      \begin{itemize}
      \item[(i)] For each characteristic submanifold $W_i$ with
           $(W_i)^{G}\neq \varnothing$ where $G=\Z^n_2$, there is a unique element $a_i\in H_1(BG;\Z_2)$ such that
            $$\rho^*(t) = \sum_{i}\langle t,a_i\rangle \tau_i \ \, \text{modulo}\ \, H^*(BG;\Z_2)\text{-torsion} $$
            for any element $t\in H^1(BG;\Z_2)$. Here the sum is taken over all the characteristic submanifolds $W_i$ with $(W_i)^G\neq \varnothing$ and $\rho^*$ is defined in~\eqref{Equ:rho-star}.\n
      \item[(ii)] For each $W_i$ with $(W_i)^G\neq \varnothing$, the subgroup $G_i$ fixing $W_i$ coincides with the subgroup determined by $a_i\in H_1(BG;\Z_2)$ through the identification 
      $H_1(BG;\Z_2)\cong \mathrm{Hom}(\Z_2,G)$.\n
      
      \item[(iii)] If $n$ different characteristic submanifolds
      $W_{i_1},\cdots, W_{i_n}$ have a $G$-fixed point
      in their intersection, then the elements
      $a_{i_1},\cdots,a_{i_n}$ form a linear basis of 
      $H_1(BG;\Z_2)$ over $\Z_2$.
   \end{itemize}
 \end{prop}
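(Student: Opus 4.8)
The plan is to follow the strategy of \cite[Proposition~3.3]{MasPanov06}: regard the identity in (i) as an equality in the torsion-free quotient $\widehat{H}^*_G(W;\Z_2)$, verify it one $G$-fixed point at a time using the injectivity of $r$ furnished by the localization theorem (Theorem~\ref{Thm:Local-Borel}), and then read off (ii) and (iii) from the local picture at a fixed point. I would first note that there is nothing to prove when $W^G=\varnothing$: then $(W_i)^G\subseteq W^G$ is empty for every $i$, so the sum in (i) is empty, and by Theorem~\ref{Thm:Local-Borel} every element of $H^*_G(W;\Z_2)$ is $H^*(BG;\Z_2)$-torsion, so $\rho^*(t)$ is torsion as well. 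Hence assume $W^G\neq\varnothing$.

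Next I would record two local computations at a fixed point $p\in W^G$. Because the action is locally standard and $\mathrm{rank}(G)=n=\dim W$, a $G$-invariant neighbourhood of $p$ is modelled on $\R^n$ with a faithful linear $\Z^n_2$-action; consequently $\pi(p)$ is a vertex of $Q$ lying on exactly $n$ facets, the index set $I(p)$ has $n$ elements, $T_pW=\bigoplus_{i\in I(p)}\nu_i|_p$, and the weights $w_i:=e^G(\nu_i|_p)\in H^1(BG;\Z_2)\cong\mathrm{Hom}(G,\Z_2)$ for $i\in I(p)$ form a basis of $\mathrm{Hom}(G,\Z_2)$ (property~(i) of the axial function in Section~\ref{Subsec:mod2-GKM}). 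On the one hand, the composite of the inclusion $EG\times_G\{p\}\hookrightarrow EG\times_G W$ with $\rho$ is the identity of $BG$, so $r_p\circ\rho^*=\mathrm{id}$ on $H^*(BG;\Z_2)$ and in particular $r_p(\rho^*(t))=t$ for $t\in H^1(BG;\Z_2)$. On the other hand, \eqref{Equ:res-tau} applied with $f=F_i$ together with \eqref{Equ:Weight-Identify} give $r_p(\tau_i)=w_i$ when $i\in I(p)$ and $r_p(\tau_i)=0$ otherwise; in particular if $(W_i)^G=\varnothing$ then $r_p(\tau_i)=0$ for every $p$, whence $\tau_i$ is $H^*(BG;\Z_2)$-torsion, which is why such $W_i$ may be omitted from the sum.

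The heart of the argument is to identify the coefficient attached to $W_i$. Fix $i$ with $(W_i)^G\neq\varnothing$ and a fixed point $p\in W_i$. I claim that $\lambda_W(F_i)$, regarded in $H_1(BG;\Z_2)\cong G$, satisfies $\langle w_j,\lambda_W(F_i)\rangle=\delta_{ij}$ for all $j\in I(p)$. Indeed, in the linear model at $p$ the characteristic submanifold $W_i$ is the coordinate hyperplane $\bigoplus_{j\in I(p),\,j\neq i}\nu_j|_p$, which $G_i=\langle\lambda_W(F_i)\rangle$ fixes pointwise, so $\lambda_W(F_i)$ acts trivially on $\nu_j|_p$ for every $j\in I(p)\setminus\{i\}$, i.e.\ $w_j(\lambda_W(F_i))=0$; and $w_i(\lambda_W(F_i))=1$, for otherwise $\lambda_W(F_i)$ would act trivially on all of $T_pW$, hence on a neighbourhood of $p$, hence on the connected manifold $W$, contradicting effectiveness and $\lambda_W(F_i)\neq0$. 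Since $\{w_j\}_{j\in I(p)}$ is a basis of $\mathrm{Hom}(G,\Z_2)$, these relations pin down $\lambda_W(F_i)$ as the corresponding dual basis vector of $H_1(BG;\Z_2)$; in particular it does not depend on the chosen $p\in W_i^G$. Setting $a_i:=\lambda_W(F_i)$ under this identification, the assertion about $G_i$ is exactly (ii).

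Finally I would assemble (i) and (iii). For any $t\in H^1(BG;\Z_2)$ and any $p\in W^G$, expanding $t=\sum_{j\in I(p)}\langle t,a_j\rangle w_j$ in the basis $\{w_j\}_{j\in I(p)}$ and using the two computations of the previous paragraph gives
\[ r_p\bigl(\rho^*(t)\bigr)=t=\sum_{j\in I(p)}\langle t,a_j\rangle\,w_j=\sum_{j\in I(p)}\langle t,a_j\rangle\,r_p(\tau_j)=r_p\Bigl(\,\textstyle\sum_i\langle t,a_i\rangle\,\tau_i\Bigr), \]
the last equality because $r_p(\tau_i)=0$ for $i\notin I(p)$. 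Since this holds for every $p\in W^G$ and $r=\bigoplus_{p}r_p$ is injective on $\widehat{H}^*_G(W;\Z_2)$, we conclude $\rho^*(t)=\sum_i\langle t,a_i\rangle\tau_i$ modulo $H^*(BG;\Z_2)$-torsion, which is (i); uniqueness of the $a_i$ follows because restricting this identity to any fixed point of $W_i$ determines $\langle t,a_i\rangle$ for all $t$. For (iii), if $W_{i_1},\dots,W_{i_n}$ have a fixed point $p$ in common then $I(p)=\{i_1,\dots,i_n\}$, so $\{a_{i_k}\}_{k=1}^n$ is the dual basis of the basis $\{w_{i_k}\}_{k=1}^n$ of $\mathrm{Hom}(G,\Z_2)$ and is therefore a basis of $H_1(BG;\Z_2)$ (equivalently $\{\lambda_W(F_{i_k})\}$ is linearly independent by the linear independence condition for $\lambda_W$). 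I expect the only real friction to be keeping the identifications $H_1(BG;\Z_2)\cong\mathrm{Hom}(\Z_2,G)\cong G$, $H^1(BG;\Z_2)\cong\mathrm{Hom}(G,\Z_2)$, the Kronecker pairing, and weights-as-equivariant-Euler-classes straight; the two genuinely needed inputs, the localization theorem and \eqref{Equ:res-tau}, are already at hand.
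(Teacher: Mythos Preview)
Your proof is correct and follows essentially the same approach the paper intends: the paper merely cites \cite[Lemma~1.3, 1.5, 1.7]{Mas99} and leaves the details to the reader, and the argument there is precisely the localization-at-fixed-points argument you have written out, identifying $a_i$ with $\lambda_W(F_i)$ via the dual basis relation to the weights at a fixed point. You have in effect supplied the omitted details.
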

    \begin{proof}
     The argument is completely parallel to the arguments
     for torus manifolds in the proof of~\cite[Lemma 1,3, Lemma. 1.5, Lemma 1.7]{Mas99}. Indeed, we can just replace
    the torus manifold $M$ in~\cite{Mas99} by our $2$-torus manifold $W$ and, replace $T^n$ by $\Z^n_2$ and $H^2(M ;\Z)$ by $H^1(W ;\Z_2)$ to obtain our proof
    here. The details of the proof are left to the reader.
    \end{proof} 
    
    In addition, the following lemma is completely parallel to the torus manifold case~\cite[Lemma 6.2]{MasPanov06}. Its proof is also parallel to~\cite{MasPanov06}, hence omitted.\n
    
    \begin{lem} \label{Lem:Diff-Restrict}
      Let $W$ be a locally standard $2$-torus manifold with orbit space $Q$.
    For any $\eta\in H^*_G(W;\Z_2)$ and any edge $e\in E(Q)$, $r_p(\eta)-r_{p'}(\eta)$ is divisible by
    $\alpha(e)$ where $p$ and $p'$ are the endpoints of $e$.
    \end{lem}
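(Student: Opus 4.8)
The plan is to reduce the statement to a computation on the circle $W_e := \pi^{-1}(e)$, which (unlike $W$ itself) is automatically equivariantly formal and small enough to handle directly. We may assume the endpoints $p$ and $p'$ of $e$ are distinct, since otherwise $r_p(\eta)-r_{p'}(\eta)=0$. As $W$ is locally standard, $Q$ is a nice manifold with corners, so the codimension-$(n-1)$ face $e$ lies in exactly $n-1$ facets and the isotropy group $G_e\subset G:=\Z^n_2$ of the relative interior of $e$ has rank $n-1$; hence $\pi^{-1}(e)$ is a circle on which $G_e$ acts trivially and $G/G_e\cong\Z_2$ acts by the reflection fixing precisely $p$ and $p'$ (this is the situation recalled in \S\ref{Subsec:mod2-GKM}, where the axial function value $\alpha(e)$ is defined by $\ker\alpha(e)=G_e$).

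First I would record the structure of $H^*_G(W_e;\Z_2)$. Since $(W_e)^G=\{p,p'\}$ and $\dim_{\Z_2}H^*(W_e;\Z_2)=\dim_{\Z_2}H^*(S^1;\Z_2)=2=|(W_e)^G|$, Theorem~\ref{thm:Hsiang} shows the $G$-action on $W_e$ is equivariantly formal. Hence, by Theorem~\ref{thm:standard} and Theorem~\ref{Thm:Local-Borel}, $H^*_G(W_e;\Z_2)$ is a free $H^*(BG;\Z_2)$-module with graded Poincar\'e series $(1+t)(1-t)^{-n}$, and the restriction
\[
 r_e:=r_p\oplus r_{p'}\colon H^*_G(W_e;\Z_2)\hookrightarrow H^*(BG;\Z_2)\oplus H^*(BG;\Z_2)
\]
is injective.

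The key step is to identify the image of $r_e$ with
\[
 S:=\bigl\{\,(x,y)\in H^*(BG;\Z_2)\oplus H^*(BG;\Z_2)\ :\ \alpha(e)\ \text{divides}\ x-y\,\bigr\}.
\]
For the inclusion $S\subseteq\mathrm{im}(r_e)$ it is enough to exhibit two elements of the image: $r_e(1)=(1,1)$, and $r_e(\tau)=(\alpha(e),0)$, where $\tau\in H^1_G(W_e;\Z_2)$ is the equivariant Thom class of the $G$-invariant codimension-one submanifold $\{p\}\subset W_e$. Indeed $r_{p'}(\tau)=0$ since $p'\notin\{p\}$, while $r_p(\tau)$ is the equivariant Euler class of the normal bundle of $\{p\}$ in $W_e$, that is, of the $1$-dimensional $G$-representation $T_pW_e$; as $G_e$ acts trivially and $G/G_e$ nontrivially on $T_pW_e$, this representation has character $\alpha(e)$, whence $r_p(\tau)=\alpha(e)$ (compare~\eqref{Equ:Weight-Identify} and~\eqref{Equ:res-tau}). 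Because $H^*(BG;\Z_2)=\Z_2[\rho_1,\dots,\rho_n]$ is an integral domain, one checks directly that $\{(1,1),(\alpha(e),0)\}$ is a free $H^*(BG;\Z_2)$-basis of $S$; thus $S$ is free with generators in degrees $0$ and $1$ and has Poincar\'e series $(1+t)(1-t)^{-n}$ --- the same as $\mathrm{im}(r_e)\cong H^*_G(W_e;\Z_2)$. Since $S\subseteq\mathrm{im}(r_e)$ and the two graded modules have equal, finite dimensions in every degree, we get $S=\mathrm{im}(r_e)$.

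Finally, since $\{p\}\subset W_e\subset W$ are inclusions of $G$-spaces, each of the restrictions $r_p,r_{p'}\colon H^*_G(W;\Z_2)\to H^*(BG;\Z_2)$ factors through $H^*_G(W_e;\Z_2)$; hence for any $\eta\in H^*_G(W;\Z_2)$ the pair $\bigl(r_p(\eta),r_{p'}(\eta)\bigr)$ lies in $\mathrm{im}(r_e)=S$, which is precisely the divisibility asserted. I expect the only genuine obstacle to be the third step, the exact determination of $\mathrm{im}(r_e)$; the Poincar\'e-series comparison above is the cleanest route I know, and everything else is formal once $W_e$ is recognized as the standard reflection circle.
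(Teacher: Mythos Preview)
Your proof is correct, but it takes a different and somewhat heavier route than the paper's. You work with the full $G$-equivariant cohomology of $W_e$: you invoke equivariant formality of the circle $W_e$, produce the Thom class $\tau$ to get $(\alpha(e),0)$ in the image of $r_e$, and then pin down $\mathrm{im}(r_e)$ exactly by a Poincar\'e-series/dimension comparison. The paper instead changes the group: it passes from $G$ to the isotropy subgroup $G_e$, which acts \emph{trivially} on $W_e$, so that $H^*_{G_e}(W_e;\Z_2)=H^*(BG_e;\Z_2)\otimes H^*(W_e;\Z_2)$ is a bare tensor product; then restriction to either point $p$ or $p'$ is just evaluation $H^*(W_e;\Z_2)\to H^*(pt;\Z_2)$ on the second tensor factor, and these agree. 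This forces $r_p(\eta)$ and $r_{p'}(\eta)$ to have the same image in $H^*(BG_e;\Z_2)$, and since the kernel of $H^*(BG;\Z_2)\to H^*(BG_e;\Z_2)$ is the ideal $(\alpha(e))$, the divisibility follows.

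The paper's argument is shorter and avoids equivariant formality, Thom classes, and dimension counting; your argument, on the other hand, yields the stronger statement that $\mathrm{im}(r_e)$ is \emph{exactly} the set $S$ of pairs satisfying the congruence, which is the content of Theorem~\ref{thm:compute} restricted to the single edge $e$.
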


    \subsection{Face ring}
   \ \n
   
    A poset (partially ordered set) $\mathcal{P}$ is called \emph{simplicial} if it has an initial element
    $\hat{0}$ and for each $x\in \mathcal{P}$ the lower segment $[\hat{0},x]$ is a boolean lattice (the face lattice of a simplex). \n
    
   Let $\mathcal{P}$ be a simplicial poset. For each $x\in \overline{\mathcal{P}}:=\mathcal{P}-\{\hat{0}\}$, we assign a geometrical simplex whose face poset 
    is $[\hat{0},x]$ and glue these geometrical simplices together according to the order relation in $\mathcal{P}$.
    The cell complex we obtained is called the \emph{geometrical realization} of $\mathcal{P}$, denoted by $|\mathcal{P}|$. We may also say that $|\mathcal{P}|$
    is a \emph{simplicial cell complex}.\n
    
    For any two elements $x, x'\in \mathcal{P}$, 
    denote by $x\vee x'$ the set of their least common
    upper bounds, and by $x\wedge x'$ their greatest common lower bounds. Since $\mathcal{P}$ is simplicial,
    $x\wedge x'$ consists of a single element if 
    $x\vee x'$ is non-empty.
    
    \begin{defi}[see Stanley~\cite{Stanley91}]
       The \emph{face ring} of a simplicial poset $\mathcal{P}$ over a field $\mathbf{k}$ is the quotient
       $$\mathbf{k}[\mathcal{P}] := \mathbf{k}[v_x:x\in\mathcal{P}] \big\slash \mathcal{I}_{\mathcal{P}}$$
       where $\mathcal{I}_{\mathcal{P}}$ is the ideal generated by all the elements of the form
         $$ v_x v_{x'} - v_{x\wedge x'}\cdot \sum_{x''\in x\vee x'} v_{x''}. $$
     \end{defi}  
   
    Let $Q$ be a nice manifold with corners.
    It is easy to see that the face poset of $Q$ is a
    simplicial poset, denoted by $\mathcal{P}_Q$.
    We call $|\mathcal{P}_Q|$ the simplicial cell complex \emph{dual} to $Q$.\n
    
    We define the \emph{face ring} of $Q$ to be 
    the face ring of $\mathcal{P}_Q$. Equivalently,    
        we can write the face ring of $Q$ as
        \[ \mathbf{k}[Q]:=\mathbf{k}[v_f: f \ 
        \text{a face of $Q$}] \big\slash \mathcal{I}_Q.  \]
        where $\mathcal{I}_Q$ is the ideal generated by all the elements of the form
        \[  v_f v_{f'} - v_{f\vee f'}\cdot \sum_{f''\in f\cap f'} v_{f''}.\]
        where $f\vee f'$ denotes the unique minimal face
        of $Q$
    containing both $f$ and $f'$. \n
    
   \noindent \textbf{Convention:}        
      For any face $f$ of $Q$, define the degree of $v_f$ to be the codimension of $f$. Then $\mathbf{k}[Q]=\mathbf{k}[\mathcal{P}_Q]$ becomes a graded ring. Note that in the discussion of torus manifolds in~\cite{MasPanov06}, the degree of $v_f$ is defined to be twice the codimension of $f$ to fit the study there.
         \nn
         
         The \emph{$f$-vector} of $Q$ is defined as
         $\mathbf{f}(Q) =(f_0,\cdots, f_{n-1})$
         where $n=\dim(Q)$ and $f_i$ is the number of faces of codimension $i+1$. The equivalent
information is contained in the \emph{$h$-vector} $\mathbf{h}(Q)=(h_0,\cdots,h_n)$ determined by the
equation:
\begin{equation} \label{Equ:h-vec}
   h_0 t^n +\cdots + h_{n-1}t+ h_n = (t-1)^n + f_0 (t-1)^{n-1}+\cdots+ f_{n-1}.
 \end{equation}
       
     The \emph{Hilbert series} of $\mathbf{k}[Q]$ is $F(\mathbf{k}[Q]):=\sum_i \dim_{\mathbf{k}}\mathbf{k}[Q]_{i} \cdot t^i$ where $\mathbf{k}[Q]_{i}$ denotes the homogeneous degree $i$ part of $\mathbf{k}[Q]$. 
       By~\cite[Proposition 3.8]{Stanley91},
       \begin{equation} \label{Equ:Hilbert-Face-Ring}
         F(\mathbf{k}[Q];t) = \frac{h_0 + h_1 t +\cdots + h_nt^n}{(1-t)^n}.
       \end{equation}  
         
          The following construction is taken from~\cite[Section 5]{MasPanov06}. For any vertex ($0$-face) $p\in Q$, we define a map
   \begin{equation} \label{Equ:s-p}
    s_p: \mathbf{k}[Q] \rightarrow \mathbf{k}[Q]\big\slash
    (v_f: p\notin f).
    \end{equation} 
    
   If $p$ is the intersection of $n$ different facets
   $F_1,\cdots, F_n$, then $\mathbf{k}[Q]\big\slash
    (v_f: p\notin f)$ can be identified with the polynomial ring $\mathbf{k}[v_{F_1},\cdots,v_{F_n}]$.  
    \n
    
    \begin{lem}[Lemma 5.6 in~\cite{MasPanov06}]\label{Lem:res-s}
     If every face of $Q$ has a vertex, then the
    direct sum $s=\bigoplus_{p} s_p$ over all vertices
    $p\in Q$ is a monomorphism from
    $\mathbf{k}[Q]$ to the sum of polynomial rings
    $\mathbf{k}[Q]\big\slash
    (v_f: p\notin f)$. 
    \end{lem}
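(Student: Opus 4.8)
The plan is to reduce the statement to the known additive structure of the face ring. First I would invoke Stanley's description of a $\mathbf{k}$-basis of the face ring $\mathbf{k}[\mathcal{P}_Q]=\mathbf{k}[Q]$ \cite{Stanley91} (the same basis underlies the use of $\mathbf{k}[Q]$ in \cite[\S5]{MasPanov06}): this ring is a free $\mathbf{k}$-module on $1$ together with all \emph{chain monomials} $v_{g_1}^{a_1}\cdots v_{g_k}^{a_k}$, where $g_1\subsetneq g_2\subsetneq\cdots\subsetneq g_k$ runs over chains of proper faces of $Q$ and $a_1,\dots,a_k\ge1$; here $1$ is the empty product, and for convenience set $g(\mu):=g_1$ for the smallest face occurring in a chain monomial $\mu$, with $g(1):=Q$. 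That the chain monomials span $\mathbf{k}[Q]$ is immediate from the straightening relations $v_fv_{f'}=v_{f\vee f'}\sum_{f''\in f\cap f'}v_{f''}$; their linear independence is the one nontrivial external input, and it is exactly what underlies \eqref{Equ:Hilbert-Face-Ring}.

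Next I would record the effect of $s_p$ on this basis. Fix a vertex $p$, lying on exactly the facets $F_1,\dots,F_n$. Since $Q$ is nice it looks like $\R^n_{\ge0}$ near $p$, so the faces of $Q$ through $p$ are in inclusion-reversing bijection with the subsets $S\subseteq\{1,\dots,n\}$, the face $h\ni p$ corresponding to $S(h):=\{\,j:h\subseteq F_j\,\}$, with $|S(h)|=\mathrm{codim}\,h$. A short induction on $|S(h)|$ using the straightening relations shows that, in the target $\mathbf{k}[Q]/(v_f:p\notin f)$ identified with $\mathbf{k}[v_{F_1},\dots,v_{F_n}]$, one has $s_p(v_h)=\prod_{j\in S(h)}v_{F_j}$ when $p\in h$ and $s_p(v_h)=0$ when $p\notin h$. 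Hence $s_p(\mu)\ne0$ precisely when $p\in g(\mu)$; in that case all the faces $g_i$ of $\mu$ contain $p$, and
\[
  s_p(\mu)=\prod_{j=1}^{n}v_{F_j}^{\,e_j},\qquad\text{where}\quad e_j=\sum_{i\,:\,j\in S(g_i)}a_i .
\]

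The crux, and the step I expect to be the main obstacle, is to prove that for a fixed vertex $p$ the chain monomials $\mu$ with $p\in g(\mu)$ are carried by $s_p$ to \emph{pairwise distinct} monomials of $\mathbf{k}[v_{F_1},\dots,v_{F_n}]$; equivalently, that such a $\mu$ is reconstructible from its exponent vector $(e_1,\dots,e_n)$. For such a $\mu$ the sets $S_i:=S(g_i)$ form a strictly decreasing chain $S_1\supsetneq\cdots\supsetneq S_k$ of nonempty subsets of $\{1,\dots,n\}$. Hence for each $j$ the set $\{i:j\in S_i\}$ is an initial segment $\{1,\dots,m(j)\}$, and $e_j=b_{m(j)}$, where $0=b_0<b_1<\cdots<b_k$ are the partial sums $b_i=a_1+\cdots+a_i$. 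Since every difference $S_i\setminus S_{i+1}$ is nonempty (with $S_{k+1}:=\varnothing$), each $b_i$ occurs among the $e_j$, so the distinct nonzero entries of $(e_1,\dots,e_n)$ are precisely $b_1<\cdots<b_k$. From the $e_j$ one then reads off $k$, next the $b_i$, then $a_i=b_i-b_{i-1}$, and finally $S_i=\{\,j:e_j\ge b_i\,\}$; since a face through $p$ is determined by its subset $S$, this recovers each $g_i$, hence $\mu$.

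Finally I would assemble the pieces. Given $0\ne x\in\mathbf{k}[Q]$, expand $x=\sum_\mu c_\mu\mu$ in the basis, pick any $\mu_0$ with $c_{\mu_0}\ne0$, and — using that \emph{every face of $Q$ has a vertex}, which is the only point at which the hypothesis enters — pick a vertex $p$ of $g(\mu_0)$. Then $p\in g(\mu_0)$, so $s_p(\mu_0)\ne0$; and in $s_p(x)=\sum_\mu c_\mu\,s_p(\mu)$ the surviving terms are scalar multiples of pairwise distinct monomials by the previous paragraph, so the monomial $s_p(\mu_0)$ occurs in $s_p(x)$ with coefficient $c_{\mu_0}\ne0$. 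Hence $s_p(x)\ne0$, so $s(x)\ne0$, and $s$ is a monomorphism. Apart from the linear independence of the chain monomials (classical), the only step requiring an idea is the partial-sum reconstruction above; the rest is routine bookkeeping with the straightening relations and with the local model $\R^n_{\ge0}$ of $Q$ near a vertex.
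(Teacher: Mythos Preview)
Your proof is correct; the paper does not supply its own argument here but defers to \cite[Lemma~5.6]{MasPanov06}, whose proof follows exactly your line --- Stanley's chain-monomial basis for $\mathbf{k}[\mathcal{P}_Q]$, together with the observation that $s_p$ carries the chain monomials through $p$ injectively to monomials of $\mathbf{k}[v_{F_1},\dots,v_{F_n}]$. Your partial-sum reconstruction of the chain $(g_i,a_i)$ from the exponent vector $(e_1,\dots,e_n)$ makes that injectivity step more explicit than the original.
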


          A finitely generated graded commutative ring $R$ over $\mathbf{k}$ is called \emph{Cohen-Macaulay} if there exists an \emph{h.s.o.p} (homogeneous system of parameters) $\theta_1,\cdots,\theta_n$ such that $R$ is a free $\mathbf{k}[\theta_1,\cdots,\theta_n]$-module.
  Clearly, if $\mathbf{k}[Q]=\mathbf{k}[\mathcal{P}_Q]$ is Cohen-Macaulay,
  then it has a \emph{l.s.o.p} (linear system of parameters). \n
  
  A simplicial complex $K$ is called a \emph{Gorenstein* complex} over $\mathbf{k}$ if its face ring $\mathbf{k}[K]$ is Cohen-Macaulay and $H^*(K;\mathbf{k})\cong H^*(S^{d};\mathbf{k})$ where $d=\dim(K)$. The reader is referred to Bruns-Herzog~\cite{BruHerz98} and Stanley~\cite{Stanley07}  for more information of Cohen-Macaulay rings and Gorenstein* complexes.\n
        
      The following proposition is parallel to~\cite[Lemma 8.2\,(1)]{MasPanov06}.  
          
      \begin{prop} \label{Prop:Hom-Polytope}
        If $Q$ is an $n$-dimensional mod $2$ homology polytope, then the geometrical realization $|\mathcal{P}_Q|$ of $\mathcal{P}_Q$ is a Gorenstein* simplicial  complex over $\Z_2$. In particular, $\Z_2[\mathcal{P}_Q]$ is Cohen-Macaulay and $H^*(|\mathcal{P}_Q|;\Z_2)\cong H^*(S^{n-1};\Z_2)$. 
      \end{prop}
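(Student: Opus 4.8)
The Gorenstein* property of $|\mathcal{P}_Q|$ over $\Z_2$, as recalled just above, amounts to two assertions: that $\Z_2[\mathcal{P}_Q]$ is Cohen--Macaulay, and that $H^*(|\mathcal{P}_Q|;\Z_2)\cong H^*(S^{n-1};\Z_2)$. The plan is to derive both from the following claim applied to $Q$ and to each of its faces: \emph{if $P$ is an $m$-dimensional mod $2$ homology polytope, then $H^*(|\mathcal{P}_P|;\Z_2)\cong H^*(S^{m-1};\Z_2)$}. Two structural remarks set this up. Since $Q$ is a mod $2$ homology polytope, $\mathcal{P}_Q$ is the face poset of an honest simplicial complex $K=|\mathcal{P}_Q|$; write $\sigma_f$ for the simplex of $K$ dual to a face $f$ of $Q$, so $\dim\sigma_f=\mathrm{codim}_Q f-1$. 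Because $g\mapsto\sigma_g$ reverses inclusion, the faces of $Q$ contained in $f$ correspond to the simplices of $K$ containing $\sigma_f$, i.e. to the face poset of $\mathrm{lk}_K(\sigma_f)$; but the same faces, with the induced order, form the face poset $\mathcal{P}_f$ of $f$. Hence $\mathrm{lk}_{|\mathcal{P}_Q|}(\sigma_f)\cong|\mathcal{P}_f|$. Moreover $f$ is itself a mod $2$ homology polytope of dimension $\dim f=n-\mathrm{codim}_Q f$: it is clearly mod $2$ face-acyclic, and its face poset is that of a simplicial complex by the previous sentence.

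Granting the claim, applying it to $Q$ gives $H^*(|\mathcal{P}_Q|;\Z_2)\cong H^*(S^{n-1};\Z_2)$, and applying it to each face $f$ of $Q$ shows that $\mathrm{lk}_{|\mathcal{P}_Q|}(\sigma_f)\cong|\mathcal{P}_f|$ has the mod $2$ cohomology of $S^{\dim f-1}$, a sphere of the correct dimension $\dim\mathrm{lk}_{|\mathcal{P}_Q|}(\sigma_f)=(n-1)-\dim\sigma_f-1$; in particular $|\mathcal{P}_Q|$ is pure of dimension $n-1$ (which also follows from Lemma~\ref{Lem:1-Skeleton-Connect}). Since every link, including the whole complex (the link of $\varnothing$), is then a mod $2$ homology sphere, Reisner's criterion (see~\cite{Stanley07}) shows that $\Z_2[\mathcal{P}_Q]$ is Cohen--Macaulay, and the two assertions defining the Gorenstein* property are both verified.

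It remains to prove the claim, which I would do in two steps (assume $m\ge 1$; $m=0$ is trivial). Step 1: $\partial P$ has the mod $2$ homology of $S^{m-1}$. Up to homeomorphism a compact manifold with corners is a compact manifold with boundary, so mod $2$ Lefschetz duality gives $H_k(P,\partial P;\Z_2)\cong H^{m-k}(P;\Z_2)$, which equals $\Z_2$ for $k=m$ and $0$ otherwise because $P$ is mod $2$ acyclic; in particular $\partial P\ne\varnothing$, since a closed mod $2$ acyclic $m$-manifold with $m\ge 1$ is impossible by Poincar\'e duality. A diagram chase in the long exact homology sequence of $(P,\partial P)$, using $\widetilde H_*(P;\Z_2)=0$, then gives $H_*(\partial P;\Z_2)\cong H_*(S^{m-1};\Z_2)$ (for $m=1$, $P$ is an interval and this is immediate). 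Step 2: $|\mathcal{P}_P|\simeq_{\Z_2}\partial P$. The facets $F_1,\dots,F_s$ of $P$ form a closed cover of $\partial P$; since $P$ is nice, a nonempty intersection $F_{i_0}\cap\cdots\cap F_{i_l}$ is a union of codimension-$(l+1)$ faces, since $P$ is a homology polytope that union is connected and hence a single face, and since $P$ is mod $2$ face-acyclic it is mod $2$ acyclic. So all nonempty multiple intersections of the $F_i$ are mod $2$ acyclic, and the nerve of the cover is tautologically the simplicial complex with face poset $\mathcal{P}_P$, i.e. $|\mathcal{P}_P|$. Thickening each $F_i$ to a small open neighborhood in $\partial P$ that deformation retracts onto it with the same intersection pattern --- available from the local model $\R^{m-l}\times\R^{l}_{\geq 0}$ near the codimension-$l$ strata --- and invoking the mod $2$ nerve lemma yields $H_*(\partial P;\Z_2)\cong H_*(|\mathcal{P}_P|;\Z_2)$. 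Combining Steps 1 and 2 proves the claim.

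The delicate point is Step 2. Two things need genuine care there: that the combinatorial nerve of the facet cover of $\partial P$ really is $|\mathcal{P}_P|$ --- this is exactly where the ``mod $2$ homology polytope'' hypothesis is used rather than merely ``mod $2$ face-acyclic'', as it is what forces all multiple facet intersections to be connected --- and that the nerve lemma may legitimately be applied to this closed cover, which I would justify either by the open thickening just described or by the Mayer--Vietoris spectral sequence of $\{F_i\}$, whose $E_1$-page is concentrated in the row $q=0$ by acyclicity of all the intersections, that row being the simplicial cochain complex of $|\mathcal{P}_P|$. The rest runs parallel to the proof of~\cite[Lemma 8.2]{MasPanov06}, with $\Z_2$-coefficients and $2$-torus actions in place of $\Q$-coefficients and torus actions.
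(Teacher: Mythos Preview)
Your argument is correct and is essentially a fleshed-out version of the proof the paper only sketches by pointing to~\cite[Lemma 8.2]{MasPanov06} with $\Z_2$-coefficients: identify $\mathrm{lk}_{|\mathcal{P}_Q|}(\sigma_f)$ with $|\mathcal{P}_f|$, show each $|\mathcal{P}_f|$ has the mod $2$ cohomology of a sphere via Lefschetz duality on $f$ together with a nerve/Mayer--Vietoris comparison of $\partial f$ with $|\mathcal{P}_f|$, and then invoke the Reisner-type characterization in~\cite[II\,5.1]{Stanley07}. Your care about the closed-cover nerve lemma (handled via open thickenings or the Mayer--Vietoris spectral sequence) is exactly the point where the argument needs justification, and you address it adequately.
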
 
      \begin{proof}
        The proof is almost identical to the proof in~\cite[Lemma 8.2]{MasPanov06} except that we use $\Z_2$-coefficients instead of $\Z$-coefficients
        when applying~\cite[II 5.1]{Stanley07} in the argument.        
      \end{proof}

     \section{Equivariantly formal $2$-torus manifolds} \label{Sec-Proof-Main-1}
     
     In this section, we study various properties of  equivariantly formal $2$-torus manifolds. 
    One may find that many discussions on $2$-torus manifolds here are parallel to the discussions in~\cite{MasPanov06} on torus manifolds. The condition ``vanishing of odd degree cohomology'' on a torus manifold in~\cite{MasPanov06} is now replaced by the equivariant formality condition on a $2$-torus manifold and, the coefficients $\Z$ is replaced by $\Z_2$. Many arguments in~\cite{MasPanov06} are
     transplanted into our proof here
     while some of them actually become simpler.
 \n
      
      In Section~\ref{Subsec:Form-Local-Std}, we prove
      some general results of equivariantly formal $\Z^r_2$-actions on compact manifolds.
      In particular, we prove that any equivariantly formal
     $2$-torus manifold is locally standard, and the equivariant formality of a $2$-torus manifold is inherited by all its facial submanifolds.\n
     
     In Section~\ref{Subsec:Equiv-Local-Std-Mfd}, we explore the relations between
      the equivariant cohomology of a locally standard $2$-torus manifold and the face ring of its orbit space.\n
      
    In Section~\ref{Subsec:Blow-up}, we prove that the equivariant formality of a $2$-torus manifold is preserved under real
     blow-ups along its facial submanifolds. Our proof uses a result from Gitler~\cite{Gilter92}. 
     
         \subsection{Equivariantly formal $\Rightarrow$ locally standard} \label{Subsec:Form-Local-Std}
     
   \begin{lem} \label{Lem:Component-Action}
   Suppose $M$ is a compact manifold whose connected components are
   $M_1,\cdots, M_k$.  A $\Z_2^r$-action on $M$ is equivariantly formal if and only if each $M_i$ is $\Z_2^r$-invariant and the restricted $\Z_2^r$-action on $M_i$ is equivariantly formal.
   \end{lem}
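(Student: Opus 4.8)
The plan is to reduce the statement to the numerical criterion of Theorem~\ref{thm:Hsiang}: for any compact $\Z_2^r$-manifold $X$ one has $\dim_{\Z_2}H^*(X^{\Z_2^r};\Z_2)\le\dim_{\Z_2}H^*(X;\Z_2)$, with both sides finite, and the $\Z_2^r$-action is equivariantly formal exactly when equality holds. The only other ingredients needed are the obvious additivity of $\dim_{\Z_2}H^*(-;\Z_2)$ and of the fixed-point set under disjoint unions of invariant subspaces, together with the elementary fact that a $\Z_2^r$-fixed point of $M$ must lie in a $\Z_2^r$-invariant connected component of $M$.

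For the ``if'' direction, suppose $G:=\Z_2^r$ carries each $M_i$ onto itself. Then $M=\bigsqcup_i M_i$ as $G$-spaces, so $M^G=\bigsqcup_i M_i^G$, $H^*(M;\Z_2)=\bigoplus_i H^*(M_i;\Z_2)$ and $H^*(M^G;\Z_2)=\bigoplus_i H^*(M_i^G;\Z_2)$. If each restricted action is equivariantly formal, Theorem~\ref{thm:Hsiang} gives $\dim_{\Z_2}H^*(M_i^G;\Z_2)=\dim_{\Z_2}H^*(M_i;\Z_2)$ for every $i$, and summing over $i$ yields the same equality for $M$; hence $M$ is equivariantly formal.

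For the ``only if'' direction, assume $M$ is equivariantly formal. I first claim $G$ preserves each component. If not, pick $i$ with $G_i:=\{g\in G:gM_i=M_i\}$ a proper subgroup of $G$. Every component in the orbit $G\cdot M_i$ has stabilizer conjugate to $G_i$, hence of index $[G:G_i]\ge 2$, so is not $G$-invariant; since any point of $M^G$ lies in a $G$-invariant component and no component of $U:=\bigcup_{M_j\in G\cdot M_i}M_j$ is $G$-invariant, the $G$-invariant open-and-closed set $U$ satisfies $U^G=\varnothing$. Writing $M=U\sqcup V$ with $V$ the (possibly empty, compact) union of the remaining components, we get
\[
 \dim_{\Z_2}H^*(M^G;\Z_2)=\dim_{\Z_2}H^*(V^G;\Z_2)\le\dim_{\Z_2}H^*(V;\Z_2)<\dim_{\Z_2}H^*(M;\Z_2),
\]
the middle inequality being Theorem~\ref{thm:Hsiang} applied to $V$ and the last one holding because $H^*(M;\Z_2)=H^*(U;\Z_2)\oplus H^*(V;\Z_2)$ with $H^*(U;\Z_2)\neq 0$. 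This contradicts the equivariant formality of $M$, proving the claim. Now that each $M_i$ is $G$-invariant, the additivity used in the previous paragraph gives $\sum_i\dim_{\Z_2}H^*(M_i^G;\Z_2)=\dim_{\Z_2}H^*(M^G;\Z_2)=\dim_{\Z_2}H^*(M;\Z_2)=\sum_i\dim_{\Z_2}H^*(M_i;\Z_2)$, while Theorem~\ref{thm:Hsiang} gives $\dim_{\Z_2}H^*(M_i^G;\Z_2)\le\dim_{\Z_2}H^*(M_i;\Z_2)$ for each $i$; equality of the two sums forces equality term by term, so every restricted action is equivariantly formal.

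The only step with genuine content is ruling out component-permuting actions: a nontrivial orbit of components contributes nothing to $M^G$ but contributes positively to $H^*(M;\Z_2)$, so it strictly lowers the total mod $2$ Betti number of $M$ below that of $M^G$ would allow, which is impossible for an equivariantly formal action. Everything else is bookkeeping with the additivity of singular mod $2$ Betti numbers over disjoint unions.
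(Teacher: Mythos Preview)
Your proof is correct and follows essentially the same route as the paper's: both reduce the statement to the numerical criterion of Theorem~\ref{thm:Hsiang} and use that a $\Z_2^r$-fixed point must lie in a $\Z_2^r$-invariant component, then compare the sum of mod~$2$ Betti numbers over invariant components with that of $M$ to force every component to be invariant and each restricted action to be equivariantly formal. The only difference is organizational: the paper lets $M_1,\dots,M_s$ be the invariant components and bounds $\dim_{\Z_2}H^*(M^{\Z_2^r};\Z_2)$ by $\sum_{i\le s}\dim_{\Z_2}H^*(M_i;\Z_2)$ directly, whereas you isolate one nontrivial $G$-orbit $U$ of components and argue $U^G=\varnothing$; these are the same idea.
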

   \begin{proof}
    The ``if'' part is obvious. For the ``only if'' part, assume that $M_{1},\cdots, M_{s}$, $ s\leq k$, are all the components each of which is
   preserved under the $\Z_2^r$-action. 
  Since the $\Z_2^r$-action on $M$ is equivariantly formal, by Theorem~\ref{thm:AllPuppe} we have
   $$\dim_{\Z_2}H^*(M^{\Z^r_2};\Z_2)=
  \dim_{\Z_2}H^*(M;\Z_2).$$
  So in particular, $M^{\Z^r_2}$ is not empty.
  Clearly, $M^{\Z^r_2}$ must lie in
   $M_{1}\cup\cdots\cup M_{s}$, so $s>0$ and $M^{\Z^r_2}$ is the disjoint union of $M_1^{\Z^r_2},\cdots, M_s^{\Z^r_2}$. Then by Theorem~\ref{thm:AllPuppe}, 
  $$ \dim_{\Z_2}H^*(M^{\Z^r_2};\Z_2) = \sum^s_{i=1} \dim_{\Z_2}H^*(M_i^{\Z^r_2};\Z_2) \leq \sum^s_{i=1} \dim_{\Z_2}H^*(M_i;\Z_2)\, \leq \dim_{\Z_2}H^*(M;\Z_2). $$
  
By comparing this inequality with the previous equation, we
can deduce that $s=k$ and on every component $M_i$, $\dim_{\Z_2}H^*(M_i^{\Z^r_2};\Z_2) = \dim_{\Z_2}H^*(M_i;\Z_2)$. So by Theorem~\ref{thm:AllPuppe} again, the 
  $\Z_2^r$-action on $M_i$ is equivariantly formal.    
   \end{proof}
    
    \begin{lem} \label{Lem:Induced-Formality}
   If a $\Z_2^r$-action on a compact manifold $M$ is
    equivariantly formal, then for every subgroup $H$ of $\Z_2^r$,
   \begin{itemize}
    \item[(i)] The action of $H$ on $M$ is 
  equivariantly formal. \n
 \item[(ii)] The induced action of $\Z^r_2$ on $M^H$ and   $\Z^r_2\slash H$ on $M^H$ are both equivariantly formal.\n
 \item[(iii)] The induced action of $\Z^r_2$ (or $\Z^r_2\slash H$) on every connected component $N$ of
 $M^H$ is equivariantly formal, hence $N$ has a $\Z^r_2$-fixed point.  
  \end{itemize}
 \end{lem}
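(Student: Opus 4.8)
The plan is to deduce the whole lemma from the numerical criterion of Theorem~\ref{thm:Hsiang} together with Lemma~\ref{Lem:Component-Action}. Throughout write $G=\Z^r_2$ and fix a subgroup $H\leq G$. The one piece of bookkeeping to set up first is the identification of the relevant fixed-point sets: since $G$ is abelian, $M^H$ is $G$-invariant and $H$ acts trivially on it, so the induced $G$-action on $M^H$ factors through $G/H$; consequently $(M^H)^{G/H}=(M^H)^{G}$, and this equals $M^G$ because $M^G\subseteq M^H$.

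For (i) I would chain three applications of Theorem~\ref{thm:Hsiang}. Applying it to the $G/H$-space $M^H$ gives $\dim_{\Z_2}H^*(M^G;\Z_2)\leq\dim_{\Z_2}H^*(M^H;\Z_2)$; applying it to the $H$-space $M$ gives $\dim_{\Z_2}H^*(M^H;\Z_2)\leq\dim_{\Z_2}H^*(M;\Z_2)$. Since the $G$-action on $M$ is equivariantly formal, the two extremes coincide, $\dim_{\Z_2}H^*(M^G;\Z_2)=\dim_{\Z_2}H^*(M;\Z_2)$, so every inequality in the chain collapses to an equality. In particular $\dim_{\Z_2}H^*(M^H;\Z_2)=\dim_{\Z_2}H^*(M;\Z_2)$, and Theorem~\ref{thm:Hsiang} applied once more to the $H$-space $M$ yields (i).

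Part (ii) comes out of the same collapse of inequalities. The equality $\dim_{\Z_2}H^*((M^H)^{G/H};\Z_2)=\dim_{\Z_2}H^*(M^H;\Z_2)$ says, via Theorem~\ref{thm:Hsiang}, that the $G/H$-action on $M^H$ is equivariantly formal; and since $(M^H)^G=(M^H)^{G/H}$, the very same equality shows that the $G$-action on $M^H$ is equivariantly formal too. For (iii) I would invoke Lemma~\ref{Lem:Component-Action}: the fixed set $M^H$ is a compact manifold (a disjoint union of closed submanifolds of $M$, the action being smooth), and by (ii) the $G$-action on it is equivariantly formal, so Lemma~\ref{Lem:Component-Action} forces each connected component $N$ to be $G$-invariant with equivariantly formal restricted $G$-action; the $G/H$-statement follows identically, or simply from $N^G=N^{G/H}$ since $H$ acts trivially on $N$. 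Finally $N$ is nonempty and connected, hence $\dim_{\Z_2}H^*(N;\Z_2)\geq 1$, and equivariant formality gives $\dim_{\Z_2}H^*(N^G;\Z_2)=\dim_{\Z_2}H^*(N;\Z_2)\geq 1$, so $N^G\neq\varnothing$.

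There is no real obstacle in this argument; the only points demanding a moment's care are the identification $(M^H)^{G/H}=M^G$ noted above, and the observation that Lemma~\ref{Lem:Component-Action} continues to apply when $M^H$ is disconnected with components of differing dimension — which is harmless, since its proof rests only on Theorem~\ref{thm:Hsiang}, valid for any paracompact space of finite cohomological dimension.
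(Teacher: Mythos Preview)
Your proposal is correct and follows essentially the same route as the paper: both arguments rest on the identification $(M^H)^{G/H}=M^G$, the numerical criterion of Theorem~\ref{thm:Hsiang}, and Lemma~\ref{Lem:Component-Action} for passing to components. The only cosmetic difference is that the paper phrases part~(i) as a proof by contradiction while you run the same chain of inequalities directly; the content is identical.
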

 \begin{proof}
 (i)  By Theorem~\ref{thm:AllPuppe}, it is equivalent to prove
 \begin{equation} \label{Equ:M-H-Fix}
   \dim_{\Z_2} H^*(M^H;\Z_2)=\dim_{\Z_2} H^*(M;\Z_2).
  \end{equation} 
  
   Otherwise assume
 $\dim_{\Z_2} H^*(M^H;\Z_2)<\dim_{\Z_2} H^*(M;\Z_2)$. Observe that the $\Z^r_2$-action on $M$ induces an action of $\Z_2^r\slash H$ on
 $M^H$ and we have 
 \begin{equation} \label{Equ:FixPt-Quotient}
    M^{\Z^r_2} = (M^H)^{\Z^r_2 \slash H}.
    \end{equation}
   So by Theorem~\ref{thm:AllPuppe}, $\dim_{\Z_2} H^*(M^{\Z^r_2};\Z_2)\leq \dim_{\Z_2} H^*(M^{H};\Z_2)
  <\dim_{\Z_2} H^*(M;\Z_2)$, which contradicts the assumption that the $\Z_2^r$-action on $M$ is equivariantly formal. This proves (i).\n
  
  (ii)
  By~\eqref{Equ:FixPt-Quotient} and the assumption that the $\Z^r_2$-action is equivariantly formal, 
  \begin{align*}
    \dim_{\Z_2} H^*\big((M^H)^{\Z^r_2 \slash H};\Z_2\big)
  & = \dim_{\Z_2} H^*\big( M^{\Z^r_2};\Z_2\big) \\
  &= \dim_{\Z_2} H^*\big( M;\Z_2\big)\overset{\eqref{Equ:M-H-Fix}}{=} \dim_{\Z_2} H^*(M^H;\Z_2).
  \end{align*}
   Then by Theorem~\ref{thm:AllPuppe}, the
   action of $\Z^r_2\slash H$ on $M^H$ is equivariantly formal, so is the action of $\Z^r_2$ on $M^H$.\n
   
   (iii) By the conclusion in (ii) and Lemma~\ref{Lem:Component-Action}, the induced action of $\Z^r_2$ (or $\Z^r_2\slash H$)  on every connected component $N$ of $M^H$ is equivariantly formal. So by Theorem~\ref{thm:AllPuppe}, $N$ must have a $\Z^r_2$-fixed point.
 \end{proof} 
  
  Next, we prove a theorem that is parallel to~\cite[Theorem 4.1]{MasPanov06}.
 
 \begin{thm} \label{Thm:Locally-Std}
   If a $2$-torus manifold $W$ is equivariantly formal,
   then $W$ must be locally standard.
 \end{thm}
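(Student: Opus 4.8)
The plan is to transplant Masuda--Panov's proof of~\cite[Theorem 4.1]{MasPanov06} (their hypothesis ``$H^{\mathrm{odd}}(M;\Z)=0$'' being replaced by equivariant formality, and $\Z$ by $\Z_2$), arguing by induction on $n=\dim W$. First I would reduce the statement to one about isotropy representations. By the slice theorem for the finite group $\Z^n_2$, an invariant tubular neighbourhood of the orbit of any point $x\in W$ is equivariantly diffeomorphic to $\Z^n_2\times_{G_x}(T_xW)$, where $G_x$ is the isotropy group of $x$ and $T_xW$ carries the isotropy representation. Since a faithful $n$-dimensional real representation of $\Z^n_2$ is the standard one up to an automorphism of $\Z^n_2$, this shows that $W$ is locally standard if and only if for every $x$ the $G_x$-representation $T_xW$ has exactly $\mathrm{rank}(G_x)$ non-trivial one-dimensional summands -- equivalently, the connected component of $W^{G_x}$ through $x$ has codimension $\mathrm{rank}(G_x)$ in $W$. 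Because $W$ is connected, an element of $G_x$ acting trivially on $T_xW$ fixes a whole invariant neighbourhood of the orbit of $x$, hence all of $W$, hence is trivial by effectiveness; so the weights of $T_xW$ always span $\mathrm{Hom}(G_x,\Z_2)$, and there are \emph{at least} $\mathrm{rank}(G_x)$ non-trivial ones. This already settles the $\Z^n_2$-fixed points (there $\mathrm{rank}(G_x)=n$, so the $n$ weights form a basis) and the points of the free part, and reduces the theorem to proving the reverse inequality at points of intermediate isotropy.

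For the inductive step the main tool is Lemma~\ref{Lem:Induced-Formality}: for every subgroup $H\le\Z^n_2$ and every connected component $N$ of $W^{H}$, the induced actions of $\Z^n_2$ and of $\Z^n_2/H$ on $N$ are equivariantly formal and $N$ contains a $\Z^n_2$-fixed point. In particular, applying Theorem~\ref{thm:Hsiang} to the equivariantly formal $\Z^n_2$-space $W^{\langle g\rangle}$ for $0\neq g\in\Z^n_2$ yields the identity $\dim_{\Z_2}H^*(W^{\langle g\rangle};\Z_2)=|W^{\Z^n_2}|=\dim_{\Z_2}H^*(W;\Z_2)$, and similarly component by component. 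The crux -- call it the key claim -- is that \emph{every point $x$ with $G_x\neq\{e\}$ lies on a codimension-one fixed submanifold}: there is $g\in G_x\setminus\{e\}$ for which the component $N$ of $W^{\langle g\rangle}$ through $x$ has codimension $1$ in $W$. Granting this, $N$ carries a faithful non-free equivariantly formal action of $\Z^n_2/\langle g\rangle\cong\Z^{n-1}_2$ (faithful because if $h\in\Z^n_2$ acts trivially on $N$ then one of $h,\,gh$ acts trivially on the tangent space at a point of $N$, hence on $W$, so $h\in\langle g\rangle$), so $N$ is an $(n-1)$-dimensional equivariantly formal $2$-torus manifold, hence locally standard by the inductive hypothesis; combining this with $T_xW=T_xN\oplus(\text{normal line})$, on which $g$ acts by $-\mathrm{id}$, one checks directly that $T_xW$ is the standard $G_x$-representation. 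The base case $n\leq 1$ is immediate. So everything reduces to the key claim.

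The key claim is where equivariant formality is genuinely needed, and I expect it to be the main obstacle. Suppose it fails at some $x$ with $G_x=H$; then the normal bundle $\nu$ of $N=$ (the component of $W^{H}$ through $x$) in $W$ has rank $c>\mathrm{rank}(H)$ and decomposes over the non-trivial characters of $H$ occurring in it, with each such character either occurring with multiplicity $\geq 2$ or lying in the span of the others. One then runs the $\Z_2$-coefficient analogue of the equivariant Thom/Euler-class computation of~\cite[Sections~3--4]{MasPanov06}: the equivariant Thom class $\tau_N\in H^{c}_{\Z^n_2}(W;\Z_2)$ restricts at each $\Z^n_2$-fixed point of $N$ to an explicit product of the $c$ weights of $\nu$, and via the localization theorem (Theorem~\ref{Thm:Local-Borel}) together with the equivariant formality of $W$, of the $W^{\langle g\rangle}$, and of $N$ (Lemma~\ref{Lem:Induced-Formality}), the constraints imposed on the images of the restriction maps $H^*_{\Z^n_2}(-;\Z_2)\to H^*_{\Z^n_2}((-)^{\Z^n_2};\Z_2)$ turn out to be incompatible with $c>\mathrm{rank}(H)$. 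The delicate points are, as usual, to check that the relevant mod-$2$ Euler classes do not vanish (this replaces the integral orientation arguments of~\cite{MasPanov06}) and to keep the numerology of the restriction maps straight; apart from that, the argument should be a faithful $\Z_2$-coefficient translation of Masuda--Panov's.
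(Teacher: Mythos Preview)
Your approach is far more elaborate than the paper's, and the added complication introduces a genuine gap: your ``key claim'' (that every point with nontrivial isotropy lies on a codimension-one fixed submanifold) is never actually proved, only sketched with an appeal to ``a faithful $\Z_2$-coefficient translation of Masuda--Panov's'' Euler-class arguments. That is not a proof, and in fact none of that machinery is needed.

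The paper's argument is a two-line application of Lemma~\ref{Lem:Induced-Formality}\,(iii), which you already quote. Given $x\in W$ with nontrivial isotropy $G_x$, take $N$ to be the component of $W^{G_x}$ through $x$. By Lemma~\ref{Lem:Induced-Formality}\,(iii), $N$ contains a global fixed point $x_0\in W^{\Z^n_2}$. Since $W^{\Z^n_2}$ is discrete, the tangential $\Z^n_2$-representation $T_{x_0}W$ is faithful, hence standard. Now $x$ and $x_0$ lie in the same component of the $G_x$-fixed set, so the $G_x$-representations on $T_xW$ and $T_{x_0}W$ are isomorphic; the latter is the restriction of a faithful $\Z^n_2$-representation to $G_x$, which is exactly what local standardness requires near $x$. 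No induction, no key claim, no Euler classes.

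You had this ingredient in hand --- you explicitly state that each component of $W^H$ contains a $\Z^n_2$-fixed point --- but you applied it only with $H=\langle g\rangle$ for individual elements $g$, then tried to climb back up by induction on $n$. Applying it once with $H=G_x$ and comparing tangent representations along $N$ finishes the job immediately. (Incidentally, this \emph{is} Masuda--Panov's proof of their Theorem~4.1; the Thom/Euler-class computations in their Sections~3--4 serve other purposes.)
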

 \begin{proof}
 Suppose $\dim(W)=n$. For a point $x\in W$, denote by $G_x$ the isotropy group of $x$. 
 \begin{itemize}
 \item If $G_x$ is trivial, then $x$ is in a free orbit
  of the $\Z_2^n$-action. So $W$ is locally standard near $x$.
  \n
 
 \item Otherwise, let $N$ be the connected component of
  $W^{G_x}$ containing $x$. By Lemma~\ref{Lem:Induced-Formality}\,(iii), the induced $\Z_2^n$-action on $N$ has a fixed point, say $x_0$. Since $W^{\Z^n_2}$ is discrete,
    the tangential $\Z^n_2$-representation
$T_{x_0}W$ is faithful.  Then  
  since $x$ and $x_0$
   are in the same connected component fixed
    pointwise by $G_x$, the $G_x$-representation on 
    $T_x W$ agrees with the restriction
of the tangential $\Z^n_2$-representation 
   $T_{x_0} W$ to $G_x$.
    This implies that $W$ is locally
standard near $x$. 
\end{itemize}
 The theorem is proved.
 \end{proof}

 \begin{prop} \label{Prop:Weak-Facial-2-torus}
    Let $W$ be an equivariantly formal $2$-torus manifold  
   with orbit space $Q$.  For any face $f$ of $Q$,
   the facial submanifold $W_f$ is also an 
  equivariantly formal $2$-torus manifold.
 \end{prop}
 \begin{proof}
   Suppose $\dim(W)=n$ and $f$ is a codimension-$k$ face of $Q$.  By Theorem~\ref{Thm:Locally-Std}, $W$ is locally standard. Then $W_f$ is a connected $(n-k)$-dimensional embedded submanifold of $W$ fixed pointwise by $G_f\cong \Z^k_2$ (see~\eqref{Equ:Isotropy-Sub-Mfd}). By Lemma~\ref{Lem:Induced-Formality}\,(iii), the induced action of $\Z^n_2\slash G_f \cong \Z^{n-k}_2$ on $W_f$ is
 equivariantly formal. 
 \end{proof}

 \subsection{Equivariant cohomology of locally standard $2$-torus manifolds} \label{Subsec:Equiv-Local-Std-Mfd}
 \ \n
  Let $W$ be an $n$-dimensional locally standard $2$-torus manifold with orbit space $Q$.
  We explore the relation between
  $H^*_G(W;\Z_2)$ where $G=\Z^n_2$ and the face ring $\Z_2[Q]$ under some conditions on $Q$.
  In the following, we use the notations from Section~\ref{Subsec:mod2-GKM}. \n

  First of all, we have a lemma that is parallel to~\cite[Lemma 6.3]{MasPanov06}.
  
  \begin{lem} \label{Lem:tau-relation}
    For any faces $f$ and $f'$ of $Q$, the relation below holds in $\widehat{H}^*_G(W;\Z_2)$:
    \[ \tau_f\tau_{f'} = \tau_{f\vee f'}\cdot \sum_{f''\in f\cap f'} \tau_{f''}.\]
  Here we define $\tau_{\varnothing}=0$.
  \end{lem}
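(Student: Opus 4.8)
The plan is to verify the identity after restricting to the fixed point set $W^G$, which is legitimate because $\widehat{H}^*_G(W;\Z_2)$ injects into $H^*_G(W^G;\Z_2)=\bigoplus_{p\in W^G} H^*(BG;\Z_2)$ by the localization theorem (Theorem~\ref{Thm:Local-Borel}). So it suffices to show that for every fixed point $p\in W^G$ we have $r_p(\tau_f\tau_{f'})=r_p\bigl(\tau_{f\vee f'}\cdot\sum_{f''\in f\cap f'}\tau_{f''}\bigr)$. Since $r_p$ is a ring homomorphism and $r_p(\tau_g)$ is given explicitly by~\eqref{Equ:res-tau}, this reduces to a bookkeeping comparison of products of the weights $\alpha(e)$ over edges $e$ at $p$. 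First I would dispose of the trivial cases: if $p$ is not a vertex of both $f$ and $f'$, then one of $\tau_f,\tau_{f'}$ restricts to $0$ at $p$, and simultaneously $p$ is a vertex of $f\vee f'$ for no term unless $p$ lies in every $f''\in f\cap f'$, which forces $p$ to be a vertex of both $f$ and $f'$; so both sides restrict to $0$. Also if $f\vee f'$ does not exist (i.e.\ $f$ and $f'$ lie in no common face), then $\tau_{f\vee f'}$ is interpreted as $\tau_\varnothing=0$ and one checks the left side vanishes too, since then $f$ and $f'$ cannot share a vertex (a vertex is a common lower bound, forcing a common upper bound in a nice manifold with corners, where the face poset is simplicial).

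The main case is when $p$ is a common vertex of $f$ and $f'$. Work in the polynomial ring $H^*(BG;\Z_2)$ and let $I(p)$ index the $n$ facets through $p$, equivalently the $n$ edges at $p$, with weights $\{\alpha(e)\}$. For a face $g$ with vertex $p$, formula~\eqref{Equ:res-tau} gives $r_p(\tau_g)=\prod_{p\in e,\,e\nsubseteq g}\alpha(e)$, i.e.\ the product of weights of exactly those edges at $p$ that are transverse to $g$; equivalently, writing $S_g\subseteq I(p)$ for the set of facets through $p$ that contain $g$, we have $r_p(\tau_g)=\prod_{i\in I(p)\setminus S_g}\alpha(e_i)$ (using the nice structure and the linear independence condition, the edges at $p$ inside $g$ correspond bijectively to $S_g$). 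Then $r_p(\tau_f)r_p(\tau_{f'})=\prod_{i\notin S_f}\alpha(e_i)\cdot\prod_{i\notin S_{f'}}\alpha(e_i)$. On the other side, $r_p(\tau_{f\vee f'})=\prod_{i\notin S_{f\vee f'}}\alpha(e_i)$ with $S_{f\vee f'}=S_f\cap S_{f'}$ (the facets through $p$ containing $f\vee f'$ are exactly those containing both $f$ and $f'$), and the faces $f''\in f\cap f'$ with vertex $p$ correspond exactly to the single face $f\vee f'$ again, so $\sum_{f''\in f\cap f'}r_p(\tau_{f''})=r_p(\tau_{f\vee f'})$ as well (here I use that in a simplicial poset $f\cap f'$, interpreted via the intersection of facet-sets, has a unique element containing $p$ when $p$ is a common vertex, namely $f\vee f'$ restricted near $p$). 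Hence the right side restricts to $r_p(\tau_{f\vee f'})^2=\prod_{i\notin S_f\cap S_{f'}}\alpha(e_i)^2$. Finally $\prod_{i\notin S_f}\alpha(e_i)\cdot\prod_{i\notin S_{f'}}\alpha(e_i)=\prod_{i\notin S_f\cap S_{f'}}\alpha(e_i)^{m_i}$ where $m_i\in\{1,2\}$; over $\Z_2$ an index with $m_i=1$ would contribute a non-square factor, but such an index lies in exactly one of $S_f,S_{f'}$, say $i\in S_f\setminus S_{f'}$, meaning the edge $e_i$ is inside $f$ but transverse to $f'$—and one checks using the fact that $f''$ ranges over $f\cap f'$ that these contributions are precisely accounted for, so both sides agree.

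The step I expect to be the main obstacle is the careful identification, near a common vertex $p$, between the combinatorial object $f\cap f'$ appearing in the face-ring relation and the facet-sets $S_f, S_{f'}, S_{f\vee f'}\subseteq I(p)$, and confirming that the ``correction'' sum $\sum_{f''\in f\cap f'}\tau_{f''}$ restricts correctly so that the $\Z_2$-arithmetic of the weight products matches—this is exactly where the passage from the integral torus-manifold argument in~\cite[Lemma 6.3]{MasPanov06} to the $\Z_2$ setting needs the nice structure and linear independence condition, and where, as the introduction notes, the argument in fact simplifies because squaring is harmless mod $2$. I would model the bookkeeping on the localized restriction picture of Section~\ref{Subsec:mod2-GKM}, reducing everything to the local data at each $p$ via the correspondence~\eqref{Equ:Weight-Identify}.
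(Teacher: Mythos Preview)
Your overall strategy---restrict via the monomorphism $r:\widehat{H}^*_G(W;\Z_2)\hookrightarrow H^*_G(W^G;\Z_2)$ and verify the identity at each fixed point using~\eqref{Equ:res-tau}---is exactly the paper's approach. However, the local bookkeeping contains two genuine errors that make the computation break down.

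First, your translation of~\eqref{Equ:res-tau} into the facet-index language is backwards. With $S_g\subseteq I(p)$ the set of facets through $p$ containing $g$, the edges at $p$ lying \emph{inside} $g$ correspond to $I(p)\setminus S_g$, not to $S_g$ (an edge $e_i$ is contained in $g=\bigcap_{j\in S_g}F_j$ iff $i\notin S_g$, since $e_i\nsubseteq F_i$). Hence $r_p(\tau_g)=\prod_{i\in S_g}\alpha(e_i)$, the opposite of what you wrote. Second, and more seriously, you identify the unique component $f''\in f\cap f'$ through $p$ with $f\vee f'$. These are different faces: $f\vee f'$ is the minimal face \emph{containing} both $f$ and $f'$ (so $S_{f\vee f'}=S_f\cap S_{f'}$, as you correctly note), whereas the component of $f\cap f'$ through $p$ is a face \emph{contained in} both, with local facet-set $S_{f''}=S_f\cup S_{f'}$. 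Your conclusion $\sum_{f''}r_p(\tau_{f''})=r_p(\tau_{f\vee f'})$ is therefore false in general, and the subsequent hand-wave (``one checks\dots these contributions are precisely accounted for'') does not repair it.

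With the two corrections in place the verification is immediate and needs no mod~$2$ trick: at a common vertex $p$,
\[
r_p(\tau_f)\,r_p(\tau_{f'})=\prod_{i\in S_f}\alpha_i\cdot\prod_{i\in S_{f'}}\alpha_i
=\prod_{i\in S_f\cap S_{f'}}\alpha_i\cdot\prod_{i\in S_f\cup S_{f'}}\alpha_i
=r_p(\tau_{f\vee f'})\cdot r_p(\tau_{f''}),
\]
the middle equality being the multiset identity $[i\in A]+[i\in B]=[i\in A\cap B]+[i\in A\cup B]$. This holds over any coefficient ring, so the remark that ``squaring is harmless mod~$2$'' is a red herring rather than the crux.
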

  \begin{proof}
     The proof is parallel to the proof of~\cite[Lemma 6.3]{MasPanov06}. The idea is to use the 
    monomorphism $r:  \widehat{H}^*_G(W;\Z_2)
     \rightarrow H^*_G(W^G;\Z_2)$ to map both sides of the identity to the fixed points and then use the formula~\eqref{Equ:res-tau} to check that they are equal. 
  \end{proof}
  
  By Lemma~\ref{Lem:tau-relation}, we obtain a well-defined homomorphism 
  \begin{align*} \label{Equ:varphi}
   \varphi: \Z_2[Q] &\longrightarrow \widehat{H}^*_G(W;\Z_2).\\
      v_f\ &\longmapsto \tau_f
  \end{align*}
  
  The following lemma and its proof are parallel to~\cite[Lemma 6.4]{MasPanov06}.
  
  \begin{lem} \label{Lem:injective}
  The homomorphism $\varphi$ is injective if every
  face $Q$ has a vertex.
  \end{lem}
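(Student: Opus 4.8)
The plan is to compare $\varphi$ with the monomorphism $s=\bigoplus_{p}s_p$ of Lemma~\ref{Lem:res-s} by restricting everything to the $G$-fixed points of $W$, in the spirit of the proof of~\cite[Lemma 6.4]{MasPanov06}. For a vertex $p$ of $Q$, let $F_{i_1},\dots,F_{i_n}$ be the $n$ facets of $Q$ through $p$, so that $\Z_2[Q]\big\slash(v_f:p\notin f)$ is identified with the polynomial ring $\Z_2[v_{F_{i_1}},\dots,v_{F_{i_n}}]$ in which a face $f$ containing $p$ has $v_f\mapsto\prod_{F\supseteq f}v_F$, the product over the facets of $Q$ containing $f$. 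Let $\psi_p\colon\Z_2[Q]\big\slash(v_f:p\notin f)\to H^*(BG;\Z_2)$ be the graded algebra homomorphism with $\psi_p(v_{F_{i_j}})=\tau_{F_{i_j}}|_p=e^G(\nu_{i_j})|_p\in H^1(BG;\Z_2)$. First I would check that the square
\begin{CD}
\Z_2[Q] @>{\varphi}>> \widehat{H}^*_G(W;\Z_2)\\
@V{s}VV @VV{r}V\\
\bigoplus_{p}\Z_2[Q]\big\slash(v_f:p\notin f) @>{\bigoplus_{p}\psi_p}>> \bigoplus_{p}H^*(BG;\Z_2)
\end{CD}
commutes, where $r=\bigoplus_{p}r_p$ is the restriction to $W^G$. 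In the $p$-component both composites kill $v_f$ when $p\notin f$, since then $s_p(v_f)=0$ and $r_p(\tau_f)=0$ by~\eqref{Equ:res-tau}; and when $p\in f$ one has $\psi_p(s_p(v_f))=\prod_{F\supseteq f}\tau_F|_p$ while $r_p(\tau_f)=\prod_{p\in e,\,e\nsubseteq f}\alpha(e)$ by~\eqref{Equ:res-tau}. These two products agree because, under the correspondence~\eqref{Equ:Weight-Identify}, the edges of $Q$ through $p$ that are not contained in $f$ are exactly those transverse to the facets through $p$ that contain $f$, and $\alpha(e)=\tau_F|_p$ whenever $e$ is transverse to $F$.

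Next I would observe that each $\psi_p$ is injective. By property (i) of the axial function in Section~\ref{Subsec:mod2-GKM} together with the identification~\eqref{Equ:Weight-Identify}, the weights $\tau_{F_{i_1}}|_p,\dots,\tau_{F_{i_n}}|_p$ form a linear basis of $H^1(BG;\Z_2)=\mathrm{Hom}(\Z^n_2,\Z_2)$ (cf.\ also the dual statement in Proposition~\ref{Prop:Equiv-Struc}(iii)). Hence $\psi_p$ carries the $n$ degree-one polynomial generators of $\Z_2[v_{F_{i_1}},\dots,v_{F_{i_n}}]$ to a basis of the degree-one part of $H^*(BG;\Z_2)=\Z_2[\rho_1,\dots,\rho_n]$, so $\psi_p$ is an isomorphism of graded rings; in particular $\bigoplus_{p}\psi_p$ is injective. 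Since by hypothesis every face of $Q$ has a vertex, Lemma~\ref{Lem:res-s} gives that $s$ is injective, so the composite $\big(\bigoplus_{p}\psi_p\big)\circ s$ is injective; by commutativity of the square this composite equals $r\circ\varphi$, and therefore $\varphi$ is injective.

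Note that the argument does not even need the fact, recorded after Theorem~\ref{Thm:Local-Borel}, that $r$ is itself a monomorphism on $\widehat{H}^*_G(W;\Z_2)$ — the injectivity of $\varphi$ is produced entirely on the face-ring side via $s$ and the $\psi_p$. I expect the only real work to lie in the commutativity check of the first step, and there the only genuine content is the combinatorial bookkeeping that matches the edges through $p$ escaping $f$ with the facets through $p$ containing $f$; once that is in place, formulas~\eqref{Equ:res-tau} and~\eqref{Equ:Weight-Identify} finish it and everything else is formal.
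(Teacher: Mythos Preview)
Your proposal is correct and follows exactly the same approach as the paper's proof: the paper simply states that, after identifying $H^*_G(p;\Z_2)$ with $\Z_2[Q]/(v_f:p\notin f)$ for each vertex $p$, one has $s=r\circ\varphi$, and then invokes Lemma~\ref{Lem:res-s}. Your map $\psi_p$ is precisely this identification made explicit, and your commutativity check unpacks what the paper compresses into the single phrase ``by identifying''; so the two arguments are the same, with yours supplying the bookkeeping that the paper omits.
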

  \begin{proof}
    According to the definitions of $r$ and $s$ (see~\eqref{Equ:Local-map} and~\eqref{Equ:s-p}),
     we have $s=r\circ\varphi$ by identifying
     $H^*_G(p,\Z_2)$ with $\Z_2[Q]\big\slash
    (v_f: p\notin f)$ for every vertex $p$ of $Q$. Then by Lemma~\ref{Lem:res-s},
     $s$ is injective if every
    face of $Q$ has a vertex, so is $\varphi$.
  \end{proof}

  The following lemma is parallel to~\cite[Proposition 7.4]{MasPanov06}. 
  
  \begin{lem} \label{Lem:Cohom-Hat-Gene}
  If the $1$-skeleton of every face of $Q$ (including $Q$ itself) is connected, then $\widehat{H}^*_G(W;\Z_2)$ is generated by the elements $\tau_{F_1},\cdots,\tau_{F_m} \in H^1_G(W;\Z_2)$ as an $H^*(BG;\Z_2)$-module, where $F_1,\cdots, F_m$ are all the facets of $Q$.
  \end{lem}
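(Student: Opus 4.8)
I would prove the slightly stronger statement that the ring homomorphism $\varphi\colon\Z_2[Q]\to\widehat{H}^*_G(W;\Z_2)$, $v_f\mapsto\tau_f$, is surjective; this says that the equivariant Thom classes $\tau_f$ of \emph{all} the facial submanifolds $W_f$ (which include $\tau_{F_1},\dots,\tau_{F_m}$) generate $\widehat{H}^*_G(W;\Z_2)$ as an $H^*(BG;\Z_2)$-module, and, together with Lemma~\ref{Lem:injective} (applicable since every face of $Q$ has a vertex), it gives $\widehat{H}^*_G(W;\Z_2)\cong\Z_2[Q]$. The whole argument runs parallel to~\cite[\S6--\S7, especially Proposition 7.4]{MasPanov06}, with $T^n,\Z,H^2$ replaced by $\Z^n_2,\Z_2,H^1$ and ``vanishing odd-degree cohomology'' replaced by equivariant formality. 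By the localization theorem (Theorem~\ref{Thm:Local-Borel}) the restriction $r\colon\widehat{H}^*_G(W;\Z_2)\hookrightarrow\bigoplus_{p\in W^G}H^*(BG;\Z_2)$ is injective, so it is enough to show that for every homogeneous $\eta\in\widehat{H}^*_G(W;\Z_2)$ the tuple $r(\eta)$ is an $H^*(BG;\Z_2)$-linear combination of the tuples $r(\tau_f)$.

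I would produce such a combination by killing the restrictions $r_{p}(\eta)$ one vertex at a time, along a total order $p_1<\cdots<p_N$ of $V(Q)$ whose restriction to the vertex set of \emph{every} face $f$ of $Q$ is a connected ordering of $\Gamma(f)$ (each initial segment spanning a connected subgraph); the existence of such an order is the combinatorial input provided by the hypothesis and is discussed below. Suppose a class $\eta'$ has been obtained from $\eta$ by subtracting an $H^*(BG;\Z_2)$-combination of Thom classes and satisfies $r_{p_i}(\eta')=0$ for $i<j$; put $v:=r_{p_j}(\eta')$. Because $p_j$ is joined by an edge of $\Gamma(Q)$ to some earlier vertex, Lemma~\ref{Lem:Diff-Restrict} gives $\alpha(e)\mid v$ for every backward edge $e$ at $p_j$; as these weights lie in the basis $\alpha_{p_j}$ of $H^1(BG;\Z_2)$, their product $\alpha_B$ divides $v$, say $v=\alpha_B\,g$, where $B$ denotes the set of backward edges at $p_j$. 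Let $f_0$ be the connected component through $p_j$ of the intersection of the $|B|$ facets of $Q$ that are transverse to the edges in $B$; by niceness this is a face of codimension $|B|$, and formula~\eqref{Equ:res-tau} gives $r_{p_j}(\tau_{f_0})=\alpha_B$, because the edges at $p_j$ not contained in $f_0$ are exactly those in $B$. Moreover $f_0$ contains no vertex $p_i$ with $i<j$: if it did, then $p_j$ would not be the smallest vertex of $f_0$, hence — the order restricting to a connected ordering of $\Gamma(f_0)$ — $p_j$ would be $\Gamma(f_0)$-adjacent to an earlier vertex, contradicting that every edge of $f_0$ at $p_j$ is a forward edge. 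Therefore $r_{p_i}(g\,\tau_{f_0})=0$ for all $i<j$ while $r_{p_j}(g\,\tau_{f_0})=v$, so subtracting $g\,\tau_{f_0}$ kills $p_j$ without disturbing $p_1,\dots,p_{j-1}$. After $N$ such steps the class restricts to $0$ at every fixed point, hence vanishes by injectivity of $r$, exhibiting $\eta$ as the required combination.

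The delicate point is the existence of one vertex order that restricts to a connected ordering of $\Gamma(f)$ for every face $f$ at once (equivalently, that keeps $f_0$ disjoint from the already-treated vertices at each step). I would obtain it by induction on $\dim Q$, using the connectivity of all the face $1$-skeleta; alternatively, one may avoid it and argue by induction on $\dim W$: should $f_0$ contain some earlier vertex, descend to the facial submanifold $W_{f_0}$ — again a locally standard $2$-torus manifold, of dimension $\dim W-|B|$, whose orbit space $f_0$ satisfies the same hypotheses — and correct $g\,\tau_{f_0}$ by the push-forward, along the equivariant Gysin homomorphism of $W_{f_0}\hookrightarrow W$, of a suitable class of $\widehat{H}^*_G(W_{f_0};\Z_2)$ supplied by the inductive hypothesis (in its ``prescribed restrictions at the vertices'' form); here one uses that this Gysin map is $H^*(BG;\Z_2)$-linear, carries $\tau^{W_{f_0}}_{g'}$ to $\tau_{g'}$ for faces $g'\subseteq f_0$, and alters restrictions to fixed points by multiplication with $e^G(\nu_{f_0})$, which equals $\alpha_B$ at $p_j$ and vanishes outside $f_0$. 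Apart from this combinatorial/inductive subtlety — the real obstacle — every step is a direct transcription of the corresponding one in~\cite{MasPanov06}.
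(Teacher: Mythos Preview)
Your argument is correct in outline and reaches the same conclusion, but you have introduced an avoidable combinatorial hurdle. The paper's proof does not order the vertices at all. Instead it picks an arbitrary $p\notin Z(\eta):=\{q\in W^G\mid r_q(\eta)=0\}$, expands $r_p(\eta)$ in the basis $\alpha_p$, and for each monomial $\prod_{p\in e}\alpha(e)^{n_e}$ lets $f$ be the face spanned by the edges with $n_e=0$, so that the monomial equals $r_p(t_f\tau_f)$ for some $t_f\in H^*(BG;\Z_2)$. The connectivity hypothesis is used not to build an ordering but to prove the single claim: \emph{for every vertex $q\in f$ one has $r_q(\eta)\notin\mathcal{D}(f)$, in particular $r_q(\eta)\neq0$}. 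This follows because $r_p(\eta)\notin\mathcal{D}(f)$ (it contains the chosen monomial), and along any edge $e\subseteq f$ the restrictions differ by a multiple of $\alpha(e)\in\mathcal{D}(f)$ (Lemma~\ref{Lem:Diff-Restrict}), so the property propagates through the connected $1$-skeleton of $f$. Consequently, subtracting $t_f\tau_f$ can only change $r_q$ at vertices $q\in f$, none of which lay in $Z(\eta)$ to begin with; hence $Z(\eta)$ never shrinks, and after cancelling all monomials at $p$ it strictly grows.

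So your ``delicate point'' --- the simultaneous connected ordering on all faces, or the inductive Gysin fix --- is unnecessary: the connectivity hypothesis already guarantees, without any ordering, that the face $f$ produced at each subtraction step contains no vertex of the current zero set. Your approach does work (the inductive alternative via $W_{f_0}$ is sound), but the paper's route is shorter and uses the hypothesis more directly.
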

  \begin{proof}
  The argument is a bit technical, but it is completely parallel to the proof of~\cite[Proposition 7.4]{MasPanov06}. The main idea of the proof is to consider the restriction of an element $\eta\in H^*_G(W;\Z_2)$ 
    to the fixed point set $W^G$ via
     $r: H^*_G(W;\Z_2) \rightarrow H^*_G(W^G;\Z_2)$, and then use $\tau_{F_1},\cdots,\tau_{F_m}$ and
 elements in $H^*(BG;\Z_2)$ to spell out $r(\eta)$ at each  fixed point $p\in W^G$ (see Proposition~\ref{Prop:Equiv-Struc}). The details of the proof are left to the reader.   
   \end{proof}
  
   The following theorem is parallel 
   to~\cite[Theorem 7.5]{MasPanov06}.    
   
   \begin{thm} \label{Thm:Local-Std-Cohom}
    Let $W$ be a locally standard $2$-torus manifold
    with orbit space $Q$. If every face $f$ of $Q$ has a vertex and the $1$-skeleton of $f$ is connected, then
  the map  $\varphi: \Z_2[Q] \rightarrow \widehat{H}^*_G(W;\Z_2)$ is an isomorphism of graded rings.    
   \end{thm}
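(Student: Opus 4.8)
The plan is to assemble the three lemmas just proved. By construction $\deg v_f=\mathrm{codim}(f)=\deg\tau_f$, so $\varphi$ is a homomorphism of graded rings (Lemma~\ref{Lem:tau-relation}), and since every face of $Q$ has a vertex it is injective by Lemma~\ref{Lem:injective}. Hence everything reduces to proving that $\varphi$ is surjective. Because $\tau_{F_i}=\varphi(v_{F_i})$ and $\mathrm{Im}(\varphi)$ is a subring, it suffices to show that $\widehat{H}^*_G(W;\Z_2)$ is generated as a $\Z_2$-algebra by $\tau_{F_1},\cdots,\tau_{F_m}$.

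First I would invoke Lemma~\ref{Lem:Cohom-Hat-Gene}, whose hypothesis (the $1$-skeleton of every face of $Q$, including $Q$, is connected) is part of our assumptions: it gives that $\widehat{H}^*_G(W;\Z_2)$ is generated \emph{as an $H^*(BG;\Z_2)$-module} by $1=\tau_Q$ and $\tau_{F_1},\cdots,\tau_{F_m}$. The step that is not formal is to upgrade this module statement to a statement about $\Z_2$-algebra generators, and this is exactly where Proposition~\ref{Prop:Equiv-Struc}(i) enters. Since every face of $Q$ has a vertex, in particular every facet $F_i$ has a vertex, so $(W_i)^G\neq\varnothing$ and Proposition~\ref{Prop:Equiv-Struc}(i) applies to all characteristic submanifolds. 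The $H^*(BG;\Z_2)$-module structure on $\widehat{H}^*_G(W;\Z_2)$ is the one induced by $\rho^*$ from~\eqref{Equ:rho-star}; under the identification $H^*(BG;\Z_2)=\Z_2[\rho_1,\cdots,\rho_n]$ this ring is generated by its degree-one part, and Proposition~\ref{Prop:Equiv-Struc}(i) says that for each $t\in H^1(BG;\Z_2)$ the class $\rho^*(t)$ equals a $\Z_2$-linear combination of $\tau_{F_1},\cdots,\tau_{F_m}$ in $\widehat{H}^*_G(W;\Z_2)$. Since $\rho^*$ is a ring homomorphism, it follows that $\rho^*(c)$ lies in the subring $R$ generated by $\tau_{F_1},\cdots,\tau_{F_m}$ for every $c\in H^*(BG;\Z_2)$.

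Finally, writing a general element of $\widehat{H}^*_G(W;\Z_2)$ in the module form $\rho^*(c_0)+\sum_i \rho^*(c_i)\,\tau_{F_i}$ supplied by Lemma~\ref{Lem:Cohom-Hat-Gene} and using that each $\rho^*(c_i)\in R$, we see that $\widehat{H}^*_G(W;\Z_2)=R$; since $R\subseteq\mathrm{Im}(\varphi)$ this shows $\varphi$ is onto, and together with injectivity $\varphi$ is an isomorphism of graded rings. The main obstacle, as indicated, is precisely the passage from $H^*(BG;\Z_2)$-module generation --- where the coefficients a priori range over the whole polynomial ring --- to $\Z_2$-algebra generation; Proposition~\ref{Prop:Equiv-Struc}(i) is what makes this passage possible, and it is the only place in the argument where the finer structure of $\rho^*$ is used. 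Everything else is a matter of quoting results already in hand.
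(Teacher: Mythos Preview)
Your proof is correct and follows the same route as the paper, which simply cites Lemma~\ref{Lem:injective} for injectivity and Lemma~\ref{Lem:Cohom-Hat-Gene} for surjectivity. You have usefully made explicit the step the paper leaves implicit: that Proposition~\ref{Prop:Equiv-Struc}(i) places the image of $\rho^*$ inside the $\Z_2$-subalgebra generated by the $\tau_{F_i}$, upgrading the $H^*(BG;\Z_2)$-module generation of Lemma~\ref{Lem:Cohom-Hat-Gene} to $\Z_2$-algebra generation and hence to surjectivity of $\varphi$.
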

   \begin{proof}
    By Lemma~\ref{Lem:injective}, $\varphi$ is injective and, by Lemma~\ref{Lem:Cohom-Hat-Gene}, $\varphi$ is surjective.
   \end{proof} 
   
   \begin{lem} \label{Lem:Equiv-1-Skeleton-Conn}
    Let $W$ be an equivariantly formal $2$-torus manifold  
   with orbit space $Q$. Then
  the $1$-skeleton of every face of $Q$ (including $Q$ itself) is connected. 
 \end{lem}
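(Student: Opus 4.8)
The plan is to first reduce to the case of $Q$ itself, and then use the mod $2$ GKM description of $H^*_G(W;\Z_2)$ from Section~\ref{Subsec:mod2-GKM}; this step is parallel to the corresponding one for torus manifolds in~\cite{MasPanov06}. By Corollary~\ref{Cor:Facial-2-torus}, for every face $f$ of $Q$ the facial submanifold $W_f$ is itself an equivariantly formal $2$-torus manifold whose orbit space is $f$ (with its induced face structure). So it suffices to prove the single statement: \emph{for any equivariantly formal $2$-torus manifold $W$, the $1$-skeleton $\Gamma(Q)$ of its orbit space $Q$ is connected}; applying this to each $W_f$ then yields the lemma.

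To prove that master statement, I would first record that $W$ is locally standard by Theorem~\ref{Thm:Locally-Std}, so $Q$ is a nice manifold with corners, $\Gamma(Q)$ is a genuine graph, and the mod $2$ GKM package is available. Two facts will be used: (a) since $W$ is equivariantly formal, the restriction $r\colon H^*_G(W;\Z_2)\to H^*_G(W^G;\Z_2)$ is injective with image characterized by Theorem~\ref{thm:compute}; (b) since $W$ is a nonempty manifold, $H^*(W;\Z_2)\neq 0$, so Theorem~\ref{thm:Hsiang} forces $W^G=W^{\Z^n_2}\neq\varnothing$ and $Q$ has at least one vertex.

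The core step is a short contradiction argument. Suppose $\Gamma(Q)$ is disconnected and let $C$ be one of its components, so $\varnothing\neq C\subsetneq V(Q)$. Define $\eta=(\eta_p)_{p\in W^G}$ with $\eta_p=1\in H^0(B\Z^n_2;\Z_2)$ for $p\in C$ and $\eta_p=0$ otherwise. For every edge $e\in E(Q)$ with endpoints $p,p'$, both endpoints lie in the same component of $\Gamma(Q)$, so $\eta_p-\eta_{p'}=0$ is (trivially) divisible by $\alpha(e)$; hence by Theorem~\ref{thm:compute}, $\eta$ lies in the image of $r$. Since $\eta$ is homogeneous of degree $0$ and $r$ preserves degree, $\eta$ must come from $H^0_G(W;\Z_2)$. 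But $W$ is connected, hence so is $EG\times_G W$, so $H^0_G(W;\Z_2)=\Z_2\cdot 1$ and $r(1)=(1,\dots,1)$; therefore $\eta\in\{0,(1,\dots,1)\}$, contradicting $\varnothing\neq C\subsetneq V(Q)$.

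I do not anticipate a serious obstacle. The only points requiring care are the degree bookkeeping in the last step (a degree-$0$ element in the image of the degree-preserving map $r$ lifts to the one-dimensional group $H^0_G(W;\Z_2)$) and ensuring the indicator element $\eta$ is non-vacuous, i.e. that $V(Q)\neq\varnothing$ — which is exactly what equivariant formality provides via Theorem~\ref{thm:Hsiang} (equivalently Lemma~\ref{Lem:Induced-Formality}(iii) applied with $H$ trivial). The reduction through Corollary~\ref{Cor:Facial-2-torus} disposes of all proper faces uniformly.
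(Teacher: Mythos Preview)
Your proposal is correct and follows essentially the same approach as the paper: both use the mod $2$ GKM description (Theorem~\ref{thm:compute}) together with the fact that $H^0_G(W;\Z_2)\cong\Z_2$ to force connectedness of $\Gamma(Q)$, and both reduce the case of a proper face $f$ to the case of $Q$ via Corollary~\ref{Cor:Facial-2-torus}. The only differences are organizational---you do the reduction to $Q$ first and spell out the indicator-function contradiction explicitly, whereas the paper is terser and handles the faces afterward.
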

  \begin{proof}
  Since $W$ is equivariantly formal, the localization 
    theorem (Theorem~\ref{Thm:Local-Borel})
     implies that the restriction homomorphism $r: H^*_{G}(W;\Z_2)  \rightarrow H^*_{G}(W^{G};\Z_2)$ is injective. In addition,  since $W$ is connected,  
the image of $H^0_{G}(W;\Z_2)$ under the restriction homomorphism is isomorphic to $\Z_2$. So the ``if'' part of Theorem~\ref{thm:compute} implies that the $1$-skeleton of $Q$ must be connected.\n

For any proper face $f$ of $Q$, the facial submanifold 
  $W_f$ is also an equivariantly formal $2$-torus manifold by Proposition~\ref{Prop:Weak-Facial-2-torus}. Then by applying the above argument to $W_f$, we obtain that the $1$-skeleton of $f$ is also connected. 
 \end{proof}
    
  \begin{cor} \label{Cor:Equiv-Cohom-Face-Ring}
   If $W$ is an equivariantly formal $2$-torus manifold, then the map $\varphi: \Z_2[Q] \rightarrow H^*_G(W;\Z_2)$ is an isomorphism of graded rings.
  \end{cor}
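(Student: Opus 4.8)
The plan is to present this corollary as the equivariantly-formal specialization of Theorem~\ref{Thm:Local-Std-Cohom}: that theorem already identifies $\Z_2[Q]$ with $\widehat{H}^*_G(W;\Z_2)$ once one knows that $W$ is locally standard and that every face of $Q$ has a vertex and connected $1$-skeleton, so essentially all that remains is to check these hypotheses and then to remove the ``hat'' using equivariant formality.

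First I would note that equivariant formality of $W$ gives, via Theorem~\ref{Thm:Locally-Std}, that $W$ is locally standard; hence $Q$ is a nice manifold with corners and the Thom classes $\tau_f$ and the homomorphism $\varphi\colon \Z_2[Q]\to\widehat H^*_G(W;\Z_2)$ of Section~\ref{Subsec:Equiv-Local-Std-Mfd} are defined. Next I would verify the two combinatorial conditions required by Theorem~\ref{Thm:Local-Std-Cohom}. Connectivity of the $1$-skeleton of every face of $Q$ (including $Q$ itself) is exactly Lemma~\ref{Lem:Equiv-1-Skeleton-Conn}. For the existence of a vertex in each face $f$, I would invoke Corollary~\ref{Cor:Facial-2-torus}: the facial submanifold $W_f$ is again an equivariantly formal $2$-torus manifold under the induced $\Z^n_2/G_f$-action, so by Theorem~\ref{thm:Hsiang} its fixed-point set $(W_f)^{\Z^n_2/G_f}$ is non-empty; such a fixed point is a $\Z^n_2$-fixed point of $W$ lying over a vertex of $f$, so $f$ has a vertex (the case $f=Q$ included).

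With both hypotheses in place, Theorem~\ref{Thm:Local-Std-Cohom} yields that $\varphi\colon \Z_2[Q]\to\widehat H^*_G(W;\Z_2)$ is an isomorphism of graded rings. Finally I would remove the hat: since $W$ is equivariantly formal, Theorem~\ref{thm:standard}(ii) says $H^*_G(W;\Z_2)$ is a free $H^*(BG;\Z_2)$-module, hence has trivial $H^*(BG;\Z_2)$-torsion, so $\widehat H^*_G(W;\Z_2)=H^*_G(W;\Z_2)$. Composing, $\varphi$ is the asserted isomorphism $\Z_2[Q]\xrightarrow{\ \sim\ }H^*_G(W;\Z_2)$.

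I do not expect a real obstacle here: the argument is a direct concatenation of results already established in the excerpt. The only point needing a moment's care is the implication ``$W_f$ equivariantly formal $\Rightarrow$ $f$ has a vertex,'' and even that is immediate once Corollary~\ref{Cor:Facial-2-torus} is granted. So the corollary is essentially just Theorem~\ref{Thm:Local-Std-Cohom} together with the torsion-freeness furnished by Theorem~\ref{thm:standard}.
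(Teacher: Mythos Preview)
Your proof is correct and follows essentially the same route as the paper: invoke Corollary~\ref{Cor:Facial-2-torus} to get a vertex in every face, Lemma~\ref{Lem:Equiv-1-Skeleton-Conn} for connectedness of the $1$-skeletons, apply Theorem~\ref{Thm:Local-Std-Cohom}, and use Theorem~\ref{thm:standard}(ii) to identify $\widehat H^*_G(W;\Z_2)$ with $H^*_G(W;\Z_2)$. The only cosmetic difference is that the paper removes the hat first rather than last.
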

 \begin{proof}
   Since $W$ is equivariantly formal, its equivariant cohomology
   $H^*_G(W;\Z_2)$ is a free module over $H^*(BG;\Z_2)$. So by definition,
  $\widehat{H}^*_G(W;\Z_2) = H^*_G(W;\Z_2)$. 
  For any face $f$ of $Q$,
 the facial submanifold $W_f$ is also an
 equivariantly formal $2$-torus manifold by
 Proposition~\ref{Prop:Weak-Facial-2-torus}. This implies that $f$ has a vertex. 
 Moreover, the $1$-skeleton of $f$ is connected by Lemma~\ref{Lem:Equiv-1-Skeleton-Conn}.
  Then the corollary follows from Theorem~\ref{Thm:Local-Std-Cohom}.
 \end{proof}
 
 When a $2$-torus manifold $W$ is equivariantly formal, Corollary~\ref{Cor:Equiv-Cohom-Face-Ring} tells us that 
 the equivariant cohomology ring of $W$ is completely determined by the face poset of its orbit space (so independent on the characteristic function $\lambda_W$ or the principal bundle
 $\xi_W$). This suggests that the orbit space of $W$ should be rather special.  \n 
 
     The following corollary is parallel to~\cite[Corollary 7.8]{MasPanov06}. It generalizes the calculation of the mod $2$ cohomology ring of a small cover in~\cite{DaJan91}.
     
      \begin{cor}
      If a $2$-torus manifold $W$ is equivariantly formal, then
      \[ H^*(W;\Z_2)\cong \Z_2[v_f: f\ \text{a face of $Q$}] \big\slash I \]
      where $I$ is the ideal generated by the following two types of elements:\n      
            \quad \ $\mathrm{(a)}$ $v_fv_{f'} - v_{f\vee f'}\sum_{f''\in f\cap f'} v_{f''}$, \ \ $\mathrm{(b)}$ $\sum^m_{i=1}\langle t,a_i\rangle v_{F_i}$, $t\in H^1(BG;\Z_2)$.\n       
            
       Here $F_1,\cdots,F_m$ are all the facets of $Q$ and the 
       elements $a_i\in H_1(BG;\Z_2)$ are defined in
       Proposition~\ref{Prop:Equiv-Struc}.     
    \end{cor}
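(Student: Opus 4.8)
The plan is to combine Corollary~\ref{Cor:Equiv-Cohom-Face-Ring} with the standard fact that ordinary cohomology is recovered from equivariant cohomology by killing the image of the augmentation ideal of $H^*(BG;\Z_2)$. Since $W$ is equivariantly formal, Theorem~\ref{thm:standard}\,(iii) tells us that $\iota_W^* : H^*_G(W;\Z_2)\to H^*(W;\Z_2)$ is surjective and induces an isomorphism
$$ H^*_G(W;\Z_2)\otimes_{H^*(BG;\Z_2)}\Z_2 \;\xrightarrow{\ \cong\ }\; H^*(W;\Z_2). $$
So first I would identify the left-hand side, using $\varphi : \Z_2[Q]\xrightarrow{\cong} H^*_G(W;\Z_2)$ from Corollary~\ref{Cor:Equiv-Cohom-Face-Ring}, as the quotient of $\Z_2[Q]$ by the ideal generated by the image of $\rho^* : H^{>0}(BG;\Z_2)\to H^*_G(W;\Z_2)$. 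Pulling this back along $\varphi$, and noting that $H^*(BG;\Z_2)=\Z_2[\rho_1,\dots,\rho_n]$ is generated in degree one, the relevant ideal in $\Z_2[Q]$ is generated by the degree-one elements $\varphi^{-1}(\rho^*(t))$ for $t\in H^1(BG;\Z_2)$.

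Next I would compute those generators explicitly. By Proposition~\ref{Prop:Equiv-Struc}\,(i), for any $t\in H^1(BG;\Z_2)$ we have $\rho^*(t)=\sum_i\langle t,a_i\rangle\,\tau_i$ modulo $H^*(BG;\Z_2)$-torsion, where the sum runs over those characteristic submanifolds $W_i=W_{F_i}$ with $(W_i)^G\neq\varnothing$; but since $W$ is equivariantly formal, Corollary~\ref{Cor:Facial-2-torus} (applied to the facet $F_i$) guarantees every characteristic submanifold has a fixed point, so the sum is over all $i=1,\dots,m$, and moreover $\widehat H^*_G(W;\Z_2)=H^*_G(W;\Z_2)$ so there is no torsion to discard. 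Since $\varphi(v_{F_i})=\tau_i$, we get $\varphi^{-1}(\rho^*(t))=\sum_{i=1}^m\langle t,a_i\rangle v_{F_i}$, which is exactly the type (b) relation. Combining, $H^*(W;\Z_2)\cong \Z_2[Q]/(\text{type (b) relations})$, and unwinding the definition of $\Z_2[Q]=\Z_2[v_f:f\text{ a face of }Q]/\mathcal I_Q$ shows the type (a) relations are precisely the generators of $\mathcal I_Q$. This gives the claimed presentation.

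The main obstacle is the bookkeeping at the passage from $H^*_G$ to $H^*$: one must be careful that killing the augmentation ideal of $H^*(BG;\Z_2)$ in the free module $H^*_G(W;\Z_2)$ corresponds, under $\varphi$, to quotienting $\Z_2[Q]$ by the ideal generated by the elements $\sum_i\langle t,a_i\rangle v_{F_i}$ — i.e. that $\varphi$ is compatible with the $H^*(BG;\Z_2)$-module structures, with $\rho_j$ acting on $\Z_2[Q]$ via multiplication by $\sum_i\langle\rho_j,a_i\rangle v_{F_i}$. This compatibility is implicit in the construction of $\varphi$ via the Gysin maps and in Proposition~\ref{Prop:Equiv-Struc}, and no essentially new input is needed beyond what has been set up; the argument is the exact $\Z_2$-analogue of the deduction of~\cite[Corollary 7.8]{MasPanov06} from its Theorem 7.5, so the details can be left to the reader.
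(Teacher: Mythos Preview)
Your proposal is correct and follows essentially the same route as the paper's proof: use Theorem~\ref{thm:standard}\,(iii) to identify $H^*(W;\Z_2)$ with $H^*_G(W;\Z_2)$ modulo the ideal generated by $\rho^*(H^{>0}(BG;\Z_2))$, then invoke Corollary~\ref{Cor:Equiv-Cohom-Face-Ring} and Proposition~\ref{Prop:Equiv-Struc} to translate this into the stated presentation of $\Z_2[Q]$ by the type~(b) relations. Your extra care in noting that every $W_i$ has a $G$-fixed point (via Corollary~\ref{Cor:Facial-2-torus}) and that the torsion qualifier in Proposition~\ref{Prop:Equiv-Struc}\,(i) is vacuous here is a welcome clarification that the paper leaves implicit.
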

    \begin{proof}
  Since $W$ is equivariantly formal,
    $\iota_{W}^* : H^*_G(W;\Z_2)\rightarrow
 H^*(W;\Z_2)$ is surjective and its kernel is generated by
 all $\rho^*(t)$ with $t\in H^1(BG;\Z_2)$ (see~\eqref{Equ:rho-star}). Then the statement follows from Corollary~\ref{Cor:Equiv-Cohom-Face-Ring} and Proposition~\ref{Prop:Equiv-Struc}.
    \end{proof}

  \subsection{Real blow-up of a locally standard $2$-torus manifold along a facial submanifold}    \label{Subsec:Blow-up}
 \ \n
 
 Let $W$ be a locally standard $2$-torus manifold with orbit space $Q$. 
For a codimension-$k$ face $f$ of $Q$, the facial submanifold $W_f$ is an embedded connected codimension-$k$ submanifold of $W$. So the equivariant normal bundle $\nu_f$ of $W_f$ in $W$ is a real vector bundle of rank $k$.
 If we replace $W_f \subset W$ by the real projective bundle $P(\nu_f)$, we obtain a new $2$-torus manifold 
 denoted by $\widetilde{W}^f$ called the \emph{real blow-up} of $W$ along $W_f$. This is analogous to 
 the blow-up of a torus manifold in~\cite[Sec.\,9]{MasPanov06} (also see~\cite[p.\,605]{GriHarr78} and~\cite[Sec.\,4]{Franz17}).\n
 
  The orbit space of $\widetilde{W}^f$, denoted by $Q^f$, is the result of ``cutting off'' the face
 $f$ from $Q$ (see Figure~\ref{Fig:Blow-up}). So $\widetilde{W}^f$ is also locally standard.
 Correspondingly, the simplicial cell complex $|\mathcal{P}_{Q^f}|$ is obtained from
 $|\mathcal{P}_{Q}|$ by a stellar subdivision of the face dual to $f$.  \n
 
  \begin{figure}
        \begin{equation*}
        \vcenter{
            \hbox{
                  \mbox{$\includegraphics[width=0.5\textwidth]{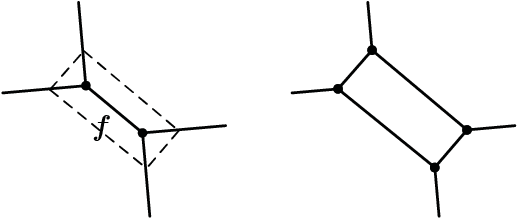}$}
                 }
           }
     \end{equation*}
   \caption{Cutting off a face from a nice manifold with corners} \label{Fig:Blow-up}
   \end{figure}
 
 \begin{prop} \label{Prop:blow-up-Equiv}
 Let $W$ be a locally standard $2$-torus manifold with orbit space $Q$ and $f$ be a proper face of $Q$ with codimension-$k$.
  Then $\widetilde{W}^f$ is equivariantly formal if and only if so is $W$.
 \end{prop}
 \begin{proof}
 (a) Let $\widetilde{\nu}_f$ denote the equivariant normal bundle of $P(\nu_f)$ in $\widetilde{W}^f$. Besides, let $\mathrm{Th}(\nu_f)$ and $\mathrm{Th}(\widetilde{\nu}_f)$ be the Thom space of $\nu_f$ and $\widetilde{\nu}_f$, respectively. Then we have a natural commutative diagram of continuous maps:
  \[ \xymatrix{
           P(\nu_f) \ar[d]_{p_0} \ar[r]^{\  \widetilde{i}}
                & \widetilde{W}^f \ar[d]^{p} \ar[r]^{\widetilde{t}\ \ \ }& \mathrm{Th}(\widetilde{\nu}_f) \ar[d]^q \\
           W_f \ar[r]^{i} & W \ar[r]^{t\ \ \ } & \mathrm{Th}(\nu_f)
                 } 
                 \]
   where $i$ and $\widetilde{i}$ are the inclusions,
   $t$ and $\widetilde{t}$ are the Thom-Pontryagin maps; and $p: \widetilde{W}^f\rightarrow W$ is the blow-down map, $p_0$ is the restriction of $p$ to $P(\nu_f)$ and
    $q$ is the induced map by $p$ in the Thom spaces. 
   
   \n
  According to~\cite[\S 5]{Gilter92} and~\cite[Theorem 3.7]{Gilter92}, there is a short exact sequence:
  \begin{equation} \label{Equ:Gilter-Seq}
   0\longrightarrow H^*(\mathrm{Th}(\nu_f);\Z_2)\overset{\alpha}{\longrightarrow}
    H^*(W;\Z_2)\oplus H^*(\mathrm{Th}(\widetilde{\nu}_f);\Z_2)
   \overset{\beta}{\longrightarrow} H^*(\widetilde{W}^f;\Z_2)\longrightarrow 0. 
   \end{equation}
   where $\alpha=(t^*, q^*)$ and $\beta=p^*-\widetilde{t}^*$.   
    This implies:
    \[ \dim_{\Z_2} H^*(\widetilde{W}^f;\Z_2) = 
    \dim_{\Z_2} H^*(W;\Z_2) +
    \dim_{\Z_2} H^*(\mathrm{Th}(\widetilde{\nu}_f);\Z_2) -
     \dim_{\Z_2} H^*(\mathrm{Th}(\nu_f);\Z_2).
      \]
      
     By the Thom isomorphism, we have
     \begin{align*}
       \dim_{\Z_2} H^*(\mathrm{Th}(\nu_f);\Z_2) &=
     \dim_{\Z_2} H^*(W_f;\Z_2), \\ 
     \dim_{\Z_2} H^*(\mathrm{Th}(\widetilde{\nu}_f);\Z_2)&=
     \dim_{\Z_2} H^*(P(\nu_f);\Z_2).  
     \end{align*}
     
    By Leray-Hirsch theorem, 
     $H^*(P(\nu_f);\Z_2)\cong H^*(W_f;\Z_2)\otimes H^*(\R P^{k-1};\Z_2)$ (as $\Z_2$-vector spaces), which implies
      $ \dim_{\Z_2} H^*(P(\nu_f);\Z_2) = k \cdot \dim_{\Z_2} H^*(W_f;\Z_2)$.     
      So  
      \begin{equation} \label{Equ:Dim-blow-up}
        \dim_{\Z_2} H^*(\widetilde{W}^f;\Z_2) = 
    \dim_{\Z_2} H^*(W;\Z_2) + (k-1) \cdot \dim_{\Z_2} H^*(W_f;\Z_2).
      \end{equation}
      
    If $W$ is equivariantly formal, then $W$ is locally standard and so $Q$ is a nice manifold with corners. It is easy to see
    \[ \# \text{vertices of $Q^f$} = 
      \# \text{vertices of $Q$} + (k-1)\cdot  
      \# \text{vertices of $f$}. \]
      
   Since the fixed point set $W^{G}$ ($G=\Z^n_2$) corresponds to the vertex set of $Q$ which is discrete,
   the number of fixed points of the $G$-action  satisfies 
   \begin{equation} \label{Equ:Fixpoints-blow-up}
     |(\widetilde{W}^f)^G| = |W^G| + (k-1)\cdot |(W_f)^G|.
  \end{equation}  
  
  By Proposition~\ref{Prop:Weak-Facial-2-torus},
  $W_f$ is also equivariantly formal. So by Theorem~\ref{thm:AllPuppe},  
  $$\dim_{\Z_2} H^*(W;\Z_2)=|W^G|, \ \ \dim_{\Z_2} H^*(W_f;\Z_2)=|(W_f)^G|.$$ 
     It follows from \eqref{Equ:Dim-blow-up} and~\eqref{Equ:Fixpoints-blow-up} that 
  $|(\widetilde{W}^f)^G| = \dim_{\Z_2} H^*(\widetilde{W}^f;\Z_2)$.
   So we deduce from Theorem~\ref{thm:AllPuppe} that $\widetilde{W}^f$ is equivariantly formal.
  \n
  
  Conversely, if $\widetilde{W}^f$ is equivariantly formal,
  we have
  \begin{align*}      
    \dim_{\Z_2} H^*(W;\Z_2) &\overset{\eqref{Equ:Dim-blow-up}}{=} \dim_{\Z_2} H^*(\widetilde{W}^f;\Z_2) - (k-1) \cdot \dim_{\Z_2} H^*(W_f;\Z_2)\\
        (\text{by Theorem~\ref{thm:AllPuppe}}) \, &\leq
         |(\widetilde{W}^f)^G|  -  (k-1)\cdot |(W_f)^G| \overset{\eqref{Equ:Fixpoints-blow-up}}{=} |W^G| = 
          \dim_{\Z_2} H^*(W^G;\Z_2).        
  \end{align*}
  
   But by Theorem~\ref{thm:AllPuppe}, $ \dim_{\Z_2} H^*(W^G;\Z_2) \leq  \dim_{\Z_2} H^*(W;\Z_2)$. So we must have $\dim_{\Z_2} H^*(W;\Z_2)= \dim_{\Z_2} H^*(W^G;\Z_2)$, which implies that $W$ is equivariantly formal.    
The proposition is proved.      
 \end{proof}

 The following lemma is parallel to~\cite[Lemma 9.1]{MasPanov06}.  Its proof is almost identical to the proof in~\cite{MasPanov06}, hence omitted.
 
 \begin{lem} \label{Lem:Q-cut-face}
 Let $Q$ be a nice manifold with corners and $f$ be a proper face of $Q$. Then $Q^f$ is mod $2$ face-acyclic if and only if so is $Q$.
 \end{lem}

 \section{Equivariantly formal $2$-torus manifolds with mod $2$ cohomology generated by degree-one part}
 \label{Sec:Equiv-Formal-Cohom-One}
 
  In our study of equivariantly formal $2$-torus manifolds, those manifolds whose mod $2$ cohomology rings are generated by
  their degree-one part are of special importance. 
 We will see in Section~\ref{Sec:Proof-Thm-1} that 
  the study of general equivariantly formal $2$-torus manifolds can be reduced to the study of these special $2$-torus manifolds by a sequence of real blow-ups along
 facial submanifolds. \n
 
The following lemma is parallel to~\cite[Lemma 2.3]{MasPanov06}.
 
 \begin{lem} \label{Lem:Surj-Cohom}
  Suppose there is an equivariantly formal $\Z_2^r$-action on a compact manifold $M$ where the cohomology ring $H^*(M;\Z_2)$ is generated by its degree-one part. Then for any subgroup $H$ of $\Z_2^r$ and every connected component $N$ of $M^H$, the homomorphism $i^*: H^*(M;\Z_2)\rightarrow H^*(N;\Z_2)$ is surjective where $i: N\hookrightarrow M$
  is the inclusion. In particular, $H^*(N;\Z_2)$ is also generated by its degree-one part.
 \end{lem}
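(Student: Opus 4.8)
The plan is to mimic the reduction used for $M^H$ at the level of cohomology and then exploit the equivariant formality of the induced action on $N$. First I would note that, by Lemma~\ref{Lem:Induced-Formality}, the induced $\Z_2^r$-action on $M^H$ and on each connected component $N$ of $M^H$ is again equivariantly formal, and $N$ carries a $\Z_2^r$-fixed point. The key observation is that, since $H^*(M;\Z_2)$ is generated in degree one, it suffices to show that every degree-one class of $H^*(N;\Z_2)$ lies in the image of $i^*$; from there, multiplicativity of $i^*$ gives surjectivity in all degrees, and the ``in particular'' statement follows immediately because $i^*$ is then a ring surjection onto $H^*(N;\Z_2)$.

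Next I would pass to the Borel construction. Consider the commutative square relating $\iota_M^*\colon H^*_{\Z_2^r}(M;\Z_2)\to H^*(M;\Z_2)$ and $\iota_N^*\colon H^*_{\Z_2^r}(N;\Z_2)\to H^*(N;\Z_2)$ via the restriction maps induced by $i\colon N\hookrightarrow M$ and by $E\Z_2^r\times_{\Z_2^r} N\hookrightarrow E\Z_2^r\times_{\Z_2^r} M$. By Theorem~\ref{thm:standard}\,(iii), equivariant formality of both $M$ and $N$ makes $\iota_M^*$ and $\iota_N^*$ surjective; moreover equivariant formality gives $H^*_{\Z_2^r}(M;\Z_2)\cong H^*(M;\Z_2)\otimes H^*(B\Z_2^r;\Z_2)$ and similarly for $N$ as modules over $H^*(B\Z_2^r;\Z_2)$. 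So it is enough to prove that the equivariant restriction $H^*_{\Z_2^r}(M;\Z_2)\to H^*_{\Z_2^r}(N;\Z_2)$ is surjective; chasing the square then yields surjectivity of $i^*$. To get equivariant surjectivity in degree one (which suffices, since $H^*_{\Z_2^r}(N;\Z_2)$ is generated over $H^*(B\Z_2^r;\Z_2)$ by $\iota_N^*$-lifts of degree-one classes of $H^*(N;\Z_2)$ together with $H^*(B\Z_2^r;\Z_2)$ itself), I would argue as follows: the classes pulled back from $B\Z_2^r$ are obviously in the image, and a degree-one class of $H^*(N;\Z_2)$ lifts to $H^1_{\Z_2^r}(N;\Z_2)$; since $H^1$ of a space classifies $\Z_2$-bundles (equivalently $H^1(-;\Z_2)=\mathrm{Hom}(\pi_1(-),\Z_2)$ via $\mathbb{R}P^\infty$), and $N$ is a retract-like subspace sitting inside $M$ with a fixed point forcing the relevant splittings, one shows each such class extends; alternatively, use the Atiyah–Bredon/localization picture: both $H^*_{\Z_2^r}(M;\Z_2)$ and $H^*_{\Z_2^r}(N;\Z_2)$ inject into the sum of $H^*(B\Z_2^r;\Z_2)$ over the common fixed-point set $M^{\Z_2^r}=(M^H)^{\Z_2^r}\supseteq N^{\Z_2^r}$, compatibly with restriction, and one compares images.

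The cleanest route, and the one I would write up, is to reduce to $H=\Z_2^r$ being replaced by a subgroup of rank one at a time, or better, to use the mod~$2$ GKM / Chang–Skjelbred description: both $H^*_{\Z_2^r}(M;\Z_2)$ and $H^*_{\Z_2^r}(N;\Z_2)$ are cut out inside $\bigoplus_{p} H^*(B\Z_2^r;\Z_2)$ (sum over fixed points) by divisibility conditions along edges, and $N^{\Z_2^r}\subseteq M^{\Z_2^r}$ with the edge-conditions for $N$ being a subset of those for $M$; an element of $H^*_{\Z_2^r}(N;\Z_2)$ in degree one is a tuple $(\eta_p)_{p\in N^{\Z_2^r}}$ of linear forms, and it extends to $M$ by setting $\eta_p$ at the remaining fixed points using the connectedness of the $1$-skeleton and compatibility, which works in degree one because linear forms can always be propagated along an edge path (the obstruction to divisibility by $\alpha(e)$ is automatic for degree-one classes once the values agree modulo $\alpha(e)$). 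The hard part will be making the extension argument precise—i.e. showing that a degree-one equivariant class on $N$ genuinely extends equivariantly to $M$ rather than just abstractly lifting $i^*$; I expect to handle this either by the bundle-theoretic argument ($H^1(-;\Z_2)$ represents line bundles, and a $\Z_2$-line bundle on $N$ extends over $M$ because $N$ is a component of a fixed set of a locally standard-type action, using the normal bundle structure) or by invoking the surjectivity in the Atiyah–Bredon sequence that comes with equivariant formality of $M$. Once degree-one equivariant surjectivity is in hand, the multiplicative structure and Theorem~\ref{thm:standard}\,(iii) finish the proof mechanically.
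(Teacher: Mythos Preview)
Your reduction in the first paragraph is circular. You say that since $H^*(M;\Z_2)$ is generated in degree one, it suffices to show every class in $H^1(N;\Z_2)$ lies in the image of $i^*$. But the image of $i^*$ is the subring generated by $i^*(H^1(M;\Z_2))\subseteq H^1(N;\Z_2)$, so knowing $i^*$ surjects onto $H^1(N;\Z_2)$ only yields surjectivity in all degrees \emph{if you already know} $H^*(N;\Z_2)$ is generated in degree one --- which is precisely the ``in particular'' conclusion you are trying to reach. The same circularity recurs when you claim $H^*_{\Z_2^r}(N;\Z_2)$ is generated over $H^*(B\Z_2^r;\Z_2)$ by lifts of degree-one classes of $H^*(N;\Z_2)$: equivariant formality gives a free module structure, not algebra generation by degree one. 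Your GKM/Chang--Skjelbred route also does not apply as stated, since the lemma is for an arbitrary compact $M$ with an equivariantly formal $\Z_2^r$-action and there is no hypothesis that $M^{\Z_2^r}$ is finite.

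The paper's proof avoids all of this by working at the opposite end of the grading. It first reduces to $H\cong\Z_2$, then uses the localization fact (Bredon) that $H^k_H(M;\Z_2)\to H^k_H(M^H;\Z_2)$ is an isomorphism for $k$ sufficiently large, so the equivariant restriction $\widehat{i}^*$ to $N$ is surjective in \emph{high} degrees. Given $\zeta\in H^*(N;\Z_2)$, one can therefore hit $\zeta\otimes t^q$ (for $q\gg 0$) by some polynomial $P(\widehat{v}_1,\ldots,\widehat{v}_d)$ in equivariant lifts of degree-one generators of $H^*(M;\Z_2)$; expanding $P(b_1t+w_1,\ldots,b_dt+w_d)$ as $\sum_k P_k(w_1,\ldots,w_d)\otimes t^k$ and matching the $t^q$ coefficient gives $\zeta=i^*(P_q(v_1,\ldots,v_d))$. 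This is the missing idea in your attempt: surjectivity is obtained first in large degree via localization, and then pushed down by the polynomial expansion, rather than built up from degree one.
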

 \begin{proof}
  First we assume $H\cong\Z_2$. We have a commutative diagram as follows:
   \begin{equation} \label{Diag:M-N}
     \xymatrix{
           H^*_H(M;\Z_2) \ar[d]_{\iota_M^*} \ar[r]^{\widehat{i}^*_H}
                & H^*_H(N;\Z_2) \ar[d]^{\iota_N^*}  \\
           H^*(M;\Z_2)  \ar[r]^{i^*} & H^*(N;\Z_2) 
                 }  
       \end{equation}
   where 
   $H^*_H(N;\Z_2)\cong H^*(N;\Z_2)\otimes H^*(BH;\Z_2)$
   and $\widehat{i}^*_H$ is the homomorphism on equivariant cohomology induced by $i$.  By our assumption, both $\iota^*_M$ and $\iota^*_N$
   are surjective.
    The following argument is parallel to the proof of~\cite[Lemma 2.3]{MasPanov06}.
    \n
    By~\cite[Theorem VII.1.5]{Bred72}, the inclusion $M^H \hookrightarrow M$ induces an isomorphism $H^k_H(M;\Z_2)\rightarrow
   H^k_H(M^H;\Z_2) $ for sufficiently large $k$, which implies that 
   $$\widehat{i}^*_H:  H^k_H(M;\Z_2)\rightarrow
   H^k_H(N;\Z_2)$$
    is surjective if $k$ is sufficiently large.\n
    
    Let $v_1,\cdots, v_d \in H^1(M;\Z_2)$ be a set of multiplicative generators of
   $H^*(M;\Z_2)$. For each $1\leq l \leq d$,
   let $\widehat{v}_l$ be a lift of
   $v_l$ in $H^*_H(M;\Z_2)$ and $w_l:= i^*(v_l)\in H^1(N;\Z_2)$.
    Let $t$ be a generator of $H^1(BH;\Z_2)\cong \Z_2$.
    By the commutativity of the above diagram~\eqref{Diag:M-N}, 
    $$\widehat{i}^*(\widehat{v}_l) = b_l t+w_l \ \,\text{for some}\ \ b_l\in \Z_2.$$
   
   Then for an arbitrary element $\zeta \in H^*(N;\Z_2)$,
   there exists a large enough integer $q\in \Z$ and a polynomial $P(x_1,\cdots, x_d)$ such that
   $$ \widehat{i}^* \big( P(\widehat{v}_1,\cdots,
   \widehat{v}_d) \big) = \zeta\otimes t^{q} . $$
   
   On the other hand, we have 
    \[ \widehat{i}^* \big( P(\widehat{v}_1,\cdots,
   \widehat{v}_d) \big) = P(b_1 t+w_1,\cdots, 
   b_d t+w_d)=\sum_{k\geq 0} P_k(w_1,\cdots,w_d)\otimes t^k \]
   for some polynomials $P_k$, $k\geq 0$. Hence $\zeta = P_q(w_1,\cdots, w_d) = i^*(P(v_1,\cdots, v_d))$. Therefore, $i^*$ is surjective and $H^*(N;\Z_2)$ is generated by $w_1,\cdots, w_d \in H^1(N;\Z_2)$. \n
   
   For the general case, suppose $H\cong \Z^s_2$, $1\leq s\leq r$. Then we have a sequence:
   \[  \{0\}= H_0\subset H_1\subset H_2 \subset \cdots\subset H_s = H  \]
   where $H_l\cong \Z^l_2$ for each $0\leq l \leq s$. Moreover, we have
   \[ M^H = \big((M^{H_1})^{H_2\slash H_1})\cdots \big)^{H_s\slash H_{s-1}} ,\;  H_l\slash H_{l-1}\cong \Z_2, \, l=1,\cdots, s.\]
    Repeating the above argument for each $H_l\slash H_{l-1}$ proves the lemma.     
  \end{proof}
  
 The following lemma is parallel to~\cite[Lemma 3.4]{MasPanov06}.
 
 \begin{lem} \label{Lem:Conn-Intersect}
 Let $W$ be an equivariantly formal $2$-torus manifold whose cohomology ring
  $H^*(W;\Z_2)$ is generated by its degree-one part.
  Then all non-empty multiple intersections of the characteristic submanifolds of $W$ are equivariantly formal $2$-torus manifolds whose mod $2$ cohomology rings are generated by their degree-one part as well.
 \end{lem}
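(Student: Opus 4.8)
The plan is to prove the statement by induction on the number of characteristic submanifolds involved in the intersection, reducing everything to the single-characteristic-submanifold case via the structure we have already built. First I would recall that, since $W$ is equivariantly formal, Theorem~\ref{Thm:Locally-Std} gives that $W$ is locally standard, so its orbit space $Q$ is a nice manifold with corners and the characteristic submanifolds are exactly the facial submanifolds $W_{F_i}$. A non-empty multiple intersection $W_{F_{i_1}}\cap\cdots\cap W_{F_{i_k}}$ decomposes into connected components, each of which is a facial submanifold $W_f$ for some codimension-$k$ face $f$ of $Q$ (using the linear independence condition and equation~\eqref{Equ:Isotropy-Sub-Mfd}). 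Thus it suffices to show each such $W_f$ is an equivariantly formal $2$-torus manifold with mod $2$ cohomology generated in degree one.

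Next I would invoke two facts already in hand. By Corollary~\ref{Cor:Facial-2-torus}, every facial submanifold $W_f$ is an equivariantly formal $2$-torus manifold with respect to the induced $\Z^n_2/G_f\cong\Z^{n-k}_2$-action; so the only thing left to establish is that $H^*(W_f;\Z_2)$ is generated by its degree-one part. For this I would apply Lemma~\ref{Lem:Surj-Cohom}: since $W_f$ is a connected component of the fixed-point set $W^{G_f}$ of the subgroup $G_f\subseteq\Z^n_2$, and $W$ is equivariantly formal with $H^*(W;\Z_2)$ generated in degree one, the restriction $i^*\colon H^*(W;\Z_2)\to H^*(W_f;\Z_2)$ is surjective. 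The image of a ring generated in degree one under a surjective ring homomorphism is again generated in degree one, so $H^*(W_f;\Z_2)$ has the desired property.

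Finally I would assemble these observations: for an arbitrary non-empty intersection $N=W_{F_{i_1}}\cap\cdots\cap W_{F_{i_k}}$, each connected component is some $W_f$, and $H^*(N;\Z_2)=\bigoplus_{\text{components}} H^*(W_f;\Z_2)$; each summand is an equivariantly formal $2$-torus manifold whose mod $2$ cohomology is generated in degree one by the argument above, and equivariant formality of the (possibly disconnected) $N$ with its $\Z^n_2$-action follows from Lemma~\ref{Lem:Component-Action}. I do not expect a serious obstacle here: the entire argument is a bookkeeping combination of Theorem~\ref{Thm:Locally-Std}, Corollary~\ref{Cor:Facial-2-torus}, Lemma~\ref{Lem:Surj-Cohom} and Lemma~\ref{Lem:Component-Action}. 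The one point requiring a little care is identifying the connected components of the multiple intersection $W_{F_{i_1}}\cap\cdots\cap W_{F_{i_k}}$ with facial submanifolds $W_f$ over codimension-$k$ faces $f$ of $Q$ — this uses that $Q$ is a \emph{nice} manifold with corners (every codimension-$k$ face is a component of an intersection of $k$ facets) together with the linear independence condition on $\lambda_W$ ensuring the isotropy group of such a component is precisely $\langle\lambda_W(F_{i_1}),\dots,\lambda_W(F_{i_k})\rangle\cong\Z^k_2$; this is exactly the discussion around~\eqref{Equ:Isotropy-Sub-Mfd}, so no new work is needed.
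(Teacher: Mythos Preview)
Your argument correctly shows that each connected component of a non-empty intersection $W_{i_1}\cap\cdots\cap W_{i_k}$ is an equivariantly formal $2$-torus manifold with mod~$2$ cohomology generated in degree one; that part is fine and is exactly how the paper begins. But you have missed the main content of the lemma. In this paper a \emph{$2$-torus manifold is by definition connected} (see the Definition in the Introduction), so the assertion that ``all non-empty multiple intersections \ldots\ are $2$-torus manifolds'' is precisely the claim that each such intersection is \emph{connected}. Your proposal explicitly allows for a ``possibly disconnected $N$'' and never addresses connectedness; this is a genuine gap, not a bookkeeping issue.

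The connectedness is the hard part and is what the lemma is actually used for later (see Proposition~\ref{Prop:Cohom-Polytope} and the proof of Theorem~\ref{thm:Main-1}\,(ii), where one needs that $|\mathcal{P}_Q|$ is a simplicial complex, i.e.\ that facet intersections in $Q$ are connected). The paper's proof establishes connectedness by an argument with equivariant Gysin maps: one shows that the restriction $H^*_G(W;\Z_2)\to H^*_G(W_i;\Z_2)$ is surjective, that $H^k_G(W;\Z_2)$ is generated multiplicatively by the equivariant Thom classes $\tau_1,\dots,\tau_m$ (this uses the degree-one generation hypothesis in an essential way), and then computes $(\psi_i\circ\varphi_i)_!(1)=\tau_{i_1}\cdots\tau_{i_k}$ for any component $N$; since this class restricts nontrivially to every $G$-fixed point of $W_{i_1}\cap\cdots\cap W_{i_k}$ but the Gysin image vanishes on fixed points outside $N$, there can be only one component. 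None of this is captured by the combination of Corollary~\ref{Cor:Facial-2-torus}, Lemma~\ref{Lem:Surj-Cohom} and Lemma~\ref{Lem:Component-Action} that you invoke.
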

 \begin{proof}    
    Let $F_1,\cdots, F_m$ be all the facets of $Q$ and $G=\Z^n_2$ where $n=\dim(W)$. 
    In the following, we use the notations defined in Section~\ref{Subsec:mod2-GKM}. 
  First of all, since the characteristic submanifold $W_i$ is a connected component of the fixed point set $X^{G_i}$, Lemma~\ref{Lem:Surj-Cohom} implies that
   the restriction $H^*(W;\Z_2)\rightarrow H^*(W_i;\Z_2)$ is surjective. So the $G$-action on $W_i$ is equivariantly formal (by Proposition~\ref{Prop:Facial-2-torus}). Then we 
   have 
  \begin{align*}
    H^*_G(W;\Z_2) &\cong H^*(W;\Z_2)\otimes H^*(BG;\Z_2), \\
    H^*_G(W_i;\Z_2)&\cong H^*(W_i;\Z_2)\otimes H^*(BG;\Z_2).
    \end{align*}
    It follows that the restriction
  $H^*_G(W;\Z_2)\rightarrow H^*_G(W_i;\Z_2)$ is also surjective.   
  In addition,  
 by using Proposition~\ref{Prop:Equiv-Struc}\,(i) and a completely parallel argument to the proof of~\cite[Prop.\,3.4(2)]{Mas99}, we can prove the following claim.
 \n
 \textbf{Claim:} $H^*_G(W;\Z_2)$ is generated as a ring by
  all the equivariant Thom classes $\tau_1,\cdots,\tau_m$
  of the normal bundles of the characteristic submanifolds $W_1,\cdots, W_m$. \n
  
   When $W_{j_1}\cap\cdots\cap W_{j_s} = \varnothing$,
  $\tau_{j_1}\cdots\tau_{j_s}$ clearly vanishes. So the above claim implies that for any $k\geq 0$, $H^k_G(W;\Z_2)$ is additively generated by the monomials $\tau^{k_1}_{j_1}\cdots\tau^{k_s}_{j_s}$
  such that $W_{j_1}\cap\cdots\cap W_{j_s}\neq \varnothing$ and $k_1+\cdots+k_s =k$.\n
  
  Let $N$ be a connected component of $W_{i_1}\cap\cdots\cap W_{i_k}$, $1\leq k \leq n$. Then $N$ is the facial submanifold $W_f$ over some codimension-$k$ face $f$ of $Q$. So by Lemma~\ref{Lem:Surj-Cohom}, $N$ is an equivariantly formal $2$-torus manifold whose cohomology ring $H^*(N;\Z_2)$ is generated by its degree-one part.
  Moreover, by a completely parallel argument to the proof of~\cite[Lemma 3.4]{MasPanov06}, 
 we can show that $N$ is the only connected component of $W_{i_1}\cap\cdots\cap W_{i_k}$ from the above discussion of $H^k_G(W;\Z_2)$. The lemma is proved. 
     \end{proof}
 
    The following proposition is parallel to~\cite[Lemma 8.2\,(2)]{MasPanov06}.
    
  \begin{prop} \label{Prop:Cohom-Polytope}
        Suppose $W$ is an $n$-dimensional equivariantly formal $2$-torus manifold
  with orbit space $Q$ and the cohomology ring $H^*(W;\Z_2)$ is generated by its degree-one part. Then the geometrical realization 
  $|\mathcal{P}_Q|$ of the
  face poset $\mathcal{P}_Q$ of $Q$ is a Gorenstein* simplicial complex over $\Z_2$. In particular, $\Z_2[\mathcal{P}_Q]=\Z[Q]$ is Cohen-Macaulay and $H^*(|\mathcal{P}_Q|;\Z_2)\cong H^*(S^{n-1};\Z_2)$. 
   \end{prop}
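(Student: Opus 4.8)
The plan is to transcribe the proof of~\cite[Lemma 8.2(2)]{MasPanov06} to $\Z_2$-coefficients: first deduce that $\Z_2[Q]$ is Cohen--Macaulay from the equivariant formality of $W$, and then read off the homotopy type of $|\mathcal{P}_Q|$ from the fundamental class of the closed manifold $W$. Write $G=\Z^n_2$, $K:=|\mathcal{P}_Q|$, and let $F_1,\dots,F_m$ be the facets of $Q$. First I would check that $K$ is a genuine simplicial complex. Since $W$ is equivariantly formal and $H^*(W;\Z_2)$ is generated by $H^1(W;\Z_2)$, Lemma~\ref{Lem:Conn-Intersect} shows that every non-empty multiple intersection of characteristic submanifolds of $W$ is connected; because $\pi\colon W\to Q$ has connected fibres, the corresponding non-empty multiple intersections of facets of $Q$ are connected, so by the criterion recalled in \S\ref{Subsec:Mfd-corners} the simplicial poset $\mathcal{P}_Q$ is the face poset of a simplicial complex $K$. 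By Corollary~\ref{Cor:Facial-2-torus} together with Theorem~\ref{thm:Hsiang} every face of $Q$ contains a vertex, and a vertex of the nice manifold with corners $Q$ is an intersection of exactly $n$ facets; hence $K$ is pure of dimension $n-1$, and its face ring $\Z_2[K]$ has Krull dimension $n$.

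Next I would establish Cohen--Macaulayness. By Corollary~\ref{Cor:Equiv-Cohom-Face-Ring} the map $\varphi$ is an isomorphism of graded rings $\Z_2[K]=\Z_2[Q]\to H^*_G(W;\Z_2)$, and by Theorem~\ref{thm:standard} (equivariant formality) $H^*_G(W;\Z_2)$ is a finitely generated free module over $H^*(BG;\Z_2)=\Z_2[\rho_1,\dots,\rho_n]$; moreover $\rho^*$ is injective since $W^G\neq\varnothing$. Transporting through $\varphi$ and using Proposition~\ref{Prop:Equiv-Struc}(i) (note every $W_{F_j}$ has a $G$-fixed point by Corollary~\ref{Cor:Facial-2-torus}), the degree-one elements $\theta_i:=\sum_{j}\langle\rho_i,a_j\rangle v_{F_j}\in\Z_2[K]_1$, $i=1,\dots,n$, are algebraically independent and $\Z_2[K]$ is a finitely generated free $\Z_2[\theta_1,\dots,\theta_n]$-module. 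As $\Z_2[K]$ has Krull dimension $n$, the $\theta_i$ form an l.s.o.p., and being free over the polynomial subring they generate, $\Z_2[K]$ is Cohen--Macaulay over $\Z_2$.

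It then remains to compute $H^*(K;\Z_2)$. By Reisner's criterion (see~\cite{Stanley07}) Cohen--Macaulayness gives $\widetilde H_i(K;\Z_2)=0$ for all $i<\dim K=n-1$, so the only possibly non-zero reduced homology of $K$ sits in degree $n-1$. Since reduction of a Cohen--Macaulay ring modulo a degree-one l.s.o.p.\ has the $h$-vector as its Hilbert function (via~\eqref{Equ:Hilbert-Face-Ring}), and since Theorem~\ref{thm:standard}(iii) identifies $\Z_2[K]/(\theta_1,\dots,\theta_n)$ with $H^*(W;\Z_2)$ as graded rings, we get $h_i=\dim_{\Z_2}H^i(W;\Z_2)$ for all $i$; in particular $h_n=\dim_{\Z_2}H^n(W;\Z_2)=1$ because $W$ is a closed connected $n$-manifold. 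Evaluating the defining identity~\eqref{Equ:h-vec} at $t=0$ gives $h_n=(-1)^{n-1}\widetilde\chi(K)$, which equals $\dim_{\Z_2}\widetilde H_{n-1}(K;\Z_2)$ by the vanishing below the top dimension. Hence $\widetilde H_{n-1}(K;\Z_2)\cong\Z_2$ and $H^*(K;\Z_2)\cong H^*(S^{n-1};\Z_2)$; together with Cohen--Macaulayness this is exactly the assertion that $K=|\mathcal{P}_Q|$ is Gorenstein* over $\Z_2$, and the ``in particular'' statements follow at once.

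The delicate point is really the first step. For a general equivariantly formal $2$-torus manifold $\mathcal{P}_Q$ is only a simplicial poset and $|\mathcal{P}_Q|$ only a simplicial cell complex; it is precisely the hypothesis that $H^*(W;\Z_2)$ is generated in degree one — fed in through Lemma~\ref{Lem:Conn-Intersect} — that upgrades $|\mathcal{P}_Q|$ to an honest simplicial complex, after which the Gorenstein* conclusion is forced. Everything else is bookkeeping: carrying the free-module structure across $\varphi$ and feeding the Poincar\'e duality algebra structure of $H^*(W;\Z_2)$ (automatic over $\Z_2$) into Stanley's machinery, exactly as in the $\Z$-coefficient argument of~\cite{MasPanov06}.
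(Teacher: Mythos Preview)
Your proof is correct and follows essentially the same strategy as the paper: show $|\mathcal{P}_Q|$ is a genuine simplicial complex via Lemma~\ref{Lem:Conn-Intersect}, deduce Cohen--Macaulayness of $\Z_2[Q]$ from equivariant formality via Corollary~\ref{Cor:Equiv-Cohom-Face-Ring} and Theorem~\ref{thm:standard}, and use $h_i=\dim_{\Z_2}H^i(W;\Z_2)$ together with Poincar\'e duality of $W$ to pin down the Euler characteristic.

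The one genuine difference is in how you reach $H^*(|\mathcal{P}_Q|;\Z_2)\cong H^*(S^{n-1};\Z_2)$. The paper invokes Stanley's criterion~\cite[II 5.1(d)]{Stanley07}, which requires separately checking the pseudomanifold condition that every $(n-2)$-simplex lies in exactly two $(n-1)$-simplices; this is done geometrically by observing that an intersection of $n-1$ characteristic submanifolds is an equivariantly formal $1$-manifold, hence a circle with two fixed points. You instead appeal to Reisner's criterion to get $\widetilde H_i(|\mathcal{P}_Q|;\Z_2)=0$ for $i<n-1$ directly from Cohen--Macaulayness, and then read off $\widetilde H_{n-1}$ from the Euler characteristic. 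Your route is slightly more economical since it bypasses the pseudomanifold verification, while the paper's route makes the underlying combinatorial geometry of $Q$ more visible. One small quibble: the fibres of $\pi$ are finite, not connected; the reason connectedness of $W_{i_1}\cap\cdots\cap W_{i_k}$ forces connectedness of $F_{i_1}\cap\cdots\cap F_{i_k}$ is simply that the latter is the continuous image of the former.
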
 
   \begin{proof}
    By Lemma~\ref{Lem:Conn-Intersect}, all non-empty multiple intersections of the characteristic submanifolds
    of $W$ are connected. This implies that $|\mathcal{P}_Q|$ is
    a simplicial complex. Moreover, by~\cite[II 5.1(d)]{Stanley07},  it is enough to verify the following three conditions to prove that $|\mathcal{P}_Q|$ is Gorenstein*
    over $\Z_2$:
    \begin{itemize}
     \item[(a)] $\Z_2[\mathcal{P}_Q]$ is Cohen-Macaulay;\n
     \item[(b)] Every $(n-2)$-simplex in $\mathcal{P}_Q$ is contained in exactly two $(n-1)$-simplices;\n
      \item[(c)] $\chi(\mathcal{P}_Q)=\chi(S^{n-1})$.
    \end{itemize}  
   
   Since $W$ is equivariantly formal,
   $H^*_G(W;\Z_2)$ is a free $H^*(BG;\Z_2)$-module 
    and   
    $\Z_2[\mathcal{P}_Q]=\Z_2[Q]$ is isomorphic to $H^*_G(W;\Z_2)$ (by Corollary~\ref{Cor:Equiv-Cohom-Face-Ring}) where $G=\Z^n_2$. This implies (a). 
   \n
    
    Note that each $(n-2)$-simplex of $\mathcal{P}_Q$
    corresponds to a non-empty intersection of $n-1$ 
characteristic submanifolds of $W$. The latter intersection is an equivariantly formal $1$-manifold by Lemma~\ref{Lem:Conn-Intersect}, so it is a circle with exactly two $G$-fixed points. This implies (b).\n

The proof of (c) is completely parallel to~\cite[Lemma 8.2\,(2)]{MasPanov06}, so we leave it to the reader. The proposition is proved.
  \end{proof}
   
  Using the above proposition and the lemmas from Section~\ref{Sec-Proof-Main-1}, we obtain the following theorem that is parallel 
  to~\cite[Theorem 7.7]{MasPanov06}. 
   
   \begin{thm} \label{Thm:Equiv-Form-Cohom-Deg1}
    Let $W$ be a $2$-torus manifold whose orbit space is $Q$. Then $W$ is equivariantly formal and the cohomology ring $H^*(W;\Z_2)$ is generated by its degree-one part
    if and only if the following three conditions are satisfied:
       \begin{itemize} 
          \item[(a)] $H^*_G(W;\Z_2)$ is isomorphic to $\Z_2[Q]=\Z_2[\mathcal{P}_Q]$ as a graded ring.\n 
          
     \item[(b)] $\Z_2[Q]$ is Cohen-Macaulay. \n
     
     \item[(c)] $|\mathcal{P}_Q|$ is a simplicial complex.
     \end{itemize}
   \end{thm}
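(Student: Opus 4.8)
The plan is to treat the two implications separately. The forward implication will be a direct assembly of results already proved, so the real content lies in the converse, which I would deduce from a single commutative-algebra fact about Cohen--Macaulay rings.

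\emph{From equivariant formality to (a)--(c).} Suppose $W$ is equivariantly formal and $H^*(W;\Z_2)$ is generated by $H^1(W;\Z_2)$. Then $W$ is locally standard by Theorem~\ref{Thm:Locally-Std}, so $Q$ is a nice manifold with corners and the homomorphism $\varphi$ is defined; Corollary~\ref{Cor:Equiv-Cohom-Face-Ring} gives that $\varphi\colon\Z_2[Q]=\Z_2[\mathcal{P}_Q]\to H^*_G(W;\Z_2)$ is an isomorphism of graded rings, which is (a). Proposition~\ref{Prop:Cohom-Polytope} shows that $|\mathcal{P}_Q|$ is a Gorenstein${}^{*}$ complex over $\Z_2$; by the very definition of that notion this makes $\mathcal{P}_Q$ the face poset of an honest simplicial complex, which is (c), and makes $\Z_2[\mathcal{P}_Q]=\Z_2[Q]$ Cohen--Macaulay, which is (b). Thus no new argument is needed in this direction.

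\emph{From (a)--(c) to equivariant formality.} Assume (a), (b), (c). By (b) and (c), $\Z_2[Q]=\Z_2[\mathcal{P}_Q]$ is the Stanley--Reisner ring of a Cohen--Macaulay simplicial complex $|\mathcal{P}_Q|$, which is therefore pure; a preliminary check---using that $Q$ is a manifold with corners and that otherwise the facets of $Q$ would be closed positive-dimensional manifolds, which is incompatible with (a)---shows that this complex has dimension $n-1$, so that $W^G\neq\varnothing$ and $\Z_2[Q]\cong H^*_G(W;\Z_2)$ has Krull dimension $n$. Next I would invoke that $H^*_G(W;\Z_2)$ is a finitely generated module over $H^*(BG;\Z_2)=\Z_2[\rho_1,\dots,\rho_n]$ (equivariant cohomology of a compact manifold over the finite group $\Z^n_2$ is Noetherian over the cohomology of its classifying space). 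Then $H^*_G(W;\Z_2)$ is integral over the image of $\rho^*$ with no drop in Krull dimension, so $\rho^*$ is injective; and $H^*_G(W;\Z_2)/(\rho^*\rho_1,\dots,\rho^*\rho_n)\cong H^*_G(W;\Z_2)\otimes_{H^*(BG;\Z_2)}\Z_2$ is finite-dimensional over $\Z_2$, so the degree-one elements $\rho^*\rho_1,\dots,\rho^*\rho_n$ form a homogeneous system of parameters for the $n$-dimensional graded ring $H^*_G(W;\Z_2)\cong\Z_2[Q]$. The decisive use of (b) is now that, in a Cohen--Macaulay graded ring, every homogeneous system of parameters is a regular sequence; hence $\rho^*\rho_1,\dots,\rho^*\rho_n$ is a regular sequence on $H^*_G(W;\Z_2)$ and $\mathrm{depth}_{H^*(BG;\Z_2)}H^*_G(W;\Z_2)=n$. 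So $H^*_G(W;\Z_2)$ is a maximal Cohen--Macaulay module over the graded polynomial ring $H^*(BG;\Z_2)$, hence free by the Auslander--Buchsbaum equality (projective dimension plus depth equals $n$). By Theorem~\ref{thm:standard} this means $W$ is equivariantly formal, and then Theorem~\ref{Thm:Locally-Std} also recovers that $W$ is locally standard, consistently with the hypotheses. Finally, equivariant formality makes $\iota_W^*\colon H^*_G(W;\Z_2)\to H^*(W;\Z_2)$ surjective by Theorem~\ref{thm:standard}\,(iii), and since $H^*_G(W;\Z_2)\cong\Z_2[Q]$ is generated in degree one, so is its image $H^*(W;\Z_2)$.

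I expect the main obstacle to be the middle of the converse: upgrading ``$\Z_2[Q]$ is a Cohen--Macaulay \emph{ring}'' to ``the \emph{particular} degree-one parameters $\rho^*\rho_1,\dots,\rho^*\rho_n$ coming from the $H^*(BG;\Z_2)$-module structure form a regular sequence'', from which freeness and hence equivariant formality follow at once. The two ingredients that make this go through---finite generation of $H^*_G(W;\Z_2)$ as an $H^*(BG;\Z_2)$-module, and the fact that in a Cohen--Macaulay graded algebra every homogeneous system of parameters is regular---must be stated with care, and the preliminary identification of the Krull dimension (equivalently $W^G\neq\varnothing$) is the one place where the geometry of $2$-torus manifolds, rather than pure commutative algebra, enters.
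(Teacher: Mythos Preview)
Your proof is correct and follows essentially the same strategy as the paper: show that the images $\rho^*(t_1),\dots,\rho^*(t_n)$ of the generators of $H^*(BG;\Z_2)$ form a homogeneous system of parameters for $H^*_G(W;\Z_2)\cong\Z_2[Q]$, then use the Cohen--Macaulay hypothesis (b) to upgrade this to a regular sequence and conclude freeness, hence equivariant formality; the degree-one generation then follows from (c) via surjectivity of $\iota_W^*$, exactly as you write. The one substantive difference is how the h.s.o.p.\ property is obtained. You argue via finite generation of $H^*_G(W;\Z_2)$ over $H^*(BG;\Z_2)$ combined with a Krull-dimension count, whereas the paper observes that the composite
\[
H^*(BG;\Z_2)\xrightarrow{\ \rho^*\ }H^*_G(W;\Z_2)\xrightarrow{\ r\ }\bigoplus_{p\in W^G}H^*(BG;\Z_2)
\]
is the diagonal map and invokes~\cite[Theorem 5.1.16]{BruHerz98} directly. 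Your route is a bit more self-contained (it avoids the external reference), and your explicit attention to establishing $\dim|\mathcal{P}_Q|=n-1$, i.e.\ $W^G\neq\varnothing$, is a point the paper leaves implicit in its appeal to the restriction map $r$. The invocation of Auslander--Buchsbaum at the end is a harmless overkill: once $\rho^*(t_1),\dots,\rho^*(t_n)$ is regular, freeness of $H^*_G(W;\Z_2)$ over $\Z_2[\rho^*(t_1),\dots,\rho^*(t_n)]=H^*(BG;\Z_2)$ is immediate, which is how the paper phrases it.
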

   \begin{proof}
  The argument is completely parallel to the proof of~\cite[Theorem 7.7]{MasPanov06}. We only need to
  replace $T^n$ by $\Z^2_n$ and $\mathbb{Q}$-coefficients by $\Z_2$-coefficients to obtain our proof here.
   \end{proof}

 \section{Proof of Theorem~\ref{thm:Main-1}} \label{Sec:Proof-Thm-1}

 In this section, we give a proof of Theorem~\ref{thm:Main-1}. Our proof
   follows the proof of~\cite[Theorem 8.3, Theorem 9.3]{MasPanov06} almost step by step, while some
   arguments for $2$-torus manifolds here are simpler than those for torus manifolds in~\cite{MasPanov06}. 

 \subsection{Equivariant cohomology of the canonical model} \label{Subsec:Canon-Model} \ \n
 
  Let $Q$ be a connected compact smooth nice $n$-manifold with corners. We call any function
  $ \lambda: \mathcal{F}(Q)\rightarrow \Z^n_2$ that satisfies the linear independence relation 
  in Section~\ref{Subsec:Mfd-corners} a \emph{characteristic function} on $Q$.
 By the same gluing rule in~\eqref{Equ-Glue-Const-Z2}, we
 can obtain a space $M_Q(\lambda)$ from any 
 characteristic function $\lambda$ on $Q$, called the \emph{canonical model} determined by $(Q,\lambda)$. It is easy to see that $M_Q(\lambda)$ is a  $2$-torus manifold of dimension $n$.
  \n
   
   Let $Q^{\vee}$ denote the cone of the geometrical realization of the \emph{order complex} $\mathrm{ord}(\overline{\mathcal{P}}_Q)$ of $\overline{\mathcal{P}}_Q=\mathcal{P}_Q-\{\hat{0}\}$. So
   topologically, $Q^{\vee}$ is homeomorphic to $\text{Cone}(|\mathcal{P}_Q|)$. Moreover,   
  $Q^{\vee}$ is
   a ``space with faces'' (see Davis~\cite[Sec.\,6]{Da83})
   where each proper face $f$ of $Q$ determines 
   a unique ``face'' $f^{\vee}$ of $Q^{\vee}$ that is the geometrical realization of the order complex of the poset 
   $\{ f' \,|\, f' \subseteq f  \}$. More precisely, $f^{\vee}$ consists of all simplices of the form 
   $f'_k\subsetneq\cdots\subsetneq f'_1\subsetneq f'_0=f$
   in $\mathrm{ord}(\overline{\mathcal{P}}_Q)$.
   The ``boundary'' of $Q^{\vee}$, denoted by $\partial Q^{\vee}$, is $\mathrm{ord}(\overline{\mathcal{P}}_Q)$ which is homeomorphic to $|\mathcal{P}_Q|$. So we have
   homeomorphisms:
    \begin{equation} \label{Equ:Q-hat-Bound}
      \partial Q^{\vee} \cong |\mathcal{P}_Q|, \ \ 
    Q^{\vee}\cong \mathrm{Cone}(|\mathcal{P}_Q|). 
    \end{equation}
   
   \begin{rem}
       When $|\mathcal{P}_Q|$ is a simplicial complex, the space $Q^{\vee}$
with the face decomposition was called in~\cite[p.\,428]{DaJan91} a \emph{simple polyhedral complex}. \n
\end{rem}
 
 Suppose $F_1,\cdots, F_m$ are all the facets of $Q$. Let
 $\mathcal{F}(Q^{\vee}) = \{ F^{\vee}_1,\cdots, F^{\vee}_m\}$. Then any characteristic function
 $\lambda: \mathcal{F}(Q)\rightarrow \Z^n_2$ induces a
 map $\lambda^{\vee}: \mathcal{F}(Q^{\vee})\rightarrow \Z^n_2$ where $\lambda^{\vee}(F^{\vee}_i) = \lambda(F_i)$,
 $1\leq i \leq m$. Then by the same gluing rule in~\eqref{Equ-Glue-Const-Z2}, we obtain a   
 space $M_{Q^{\vee}}(\lambda^{\vee})$ with a canonical $\Z^n_2$-action. 
 By the same argument as in the proof of~\cite[Proposition 5.14]{MasPanov06}, we can prove the following.
  
 \begin{prop} \label{Prop:Q-hat-Map}
 There exists a 
   continuous map $\phi: Q\rightarrow Q^{\vee}$ which preserves the face structure and induces an equivariant continuous map
   $$ \Phi: M_Q(\lambda) \rightarrow M_{Q^{\vee}}(\lambda^{\vee}).$$
 \end{prop}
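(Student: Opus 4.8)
The plan is to construct the face-preserving map $\phi\colon Q\to Q^{\vee}$ first, and then to check that it induces $\Phi$ more or less formally; essentially all the geometry lies in reproducing the construction of $\phi$ from \cite[Prop.~5.14]{MasPanov06}, while the passage to the canonical models is where the (trivial) $2$-torus-specific bookkeeping happens. For $\phi$: since $Q$ is a connected compact smooth nice manifold with corners, I would first choose, by a partition-of-unity argument over the corner charts, a system of continuous ``defining functions'' $x_i\colon Q\to[0,1]$ for the facets $F_1,\dots,F_m$, with $x_i^{-1}(0)=F_i$, chosen compatibly with the face structure in the sense that for every $q\in Q$ the facets through $q$ are exactly those $F_i$ with $x_i(q)=0$, and that on a neighbourhood of each face the relevant $x_i$'s extend to a chart onto $\R^{k}_{\ge 0}\times\R^{n-k}$. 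From the collection $(x_i)_i$ one then manufactures, exactly as in \cite{MasPanov06}, a family of continuous ``barycentric-type'' functions $\phi_f\colon Q\to[0,1]$ indexed by the faces $f$ of $Q$ (including $Q$ itself), with $\sum_f\phi_f\equiv 1$ and with $\{f:\phi_f(q)>0\}$ a chain in $\mathcal{P}_Q$ for each $q$; via~\eqref{Equ:Q-hat-Bound} this data determines a point of $Q^{\vee}\cong\mathrm{Cone}(|\mathcal{P}_Q|)$, and $q\mapsto(\phi_f(q))_f$ defines $\phi$. One arranges, as in \cite{MasPanov06}, that $\phi$ carries $\mathrm{relint}(f)$ into $\mathrm{relint}(f^{\vee})$ for every face $f$ and satisfies $\phi^{-1}(f^{\vee})=f$; in particular $\phi$ preserves the face structure, $\phi^{-1}(\partial Q^{\vee})=\partial Q$, and for the unique face $f(q)$ of $Q$ with $q\in\mathrm{relint}(f(q))$ the unique face of $Q^{\vee}$ containing $\phi(q)$ in its relative interior is $f(q)^{\vee}$.

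Granting $\phi$, I would define $\Phi\colon M_Q(\lambda)\to M_{Q^{\vee}}(\lambda^{\vee})$ to be the map induced by $\phi\times\mathrm{id}\colon Q\times\Z^n_2\to Q^{\vee}\times\Z^n_2$ on the quotients by the relation~\eqref{Equ-Glue-Const-Z2}, i.e.\ $\Phi([q,g])=[\phi(q),g]$. For well-definedness, suppose $(q,g)\sim(q',g')$ in $M_Q(\lambda)$, so $q=q'$ and $g-g'\in G_{f(q)}$. Since $\lambda^{\vee}(F_i^{\vee})=\lambda(F_i)$ and, for every face $f$ of $Q$, the facets of $Q^{\vee}$ containing $f^{\vee}$ are precisely the $F_i^{\vee}$ with $F_i\supseteq f$, the subgroup attached to $f^{\vee}$ by the gluing rule~\eqref{Equ-Glue-Const-Z2} is the subgroup generated by $\{\lambda(F_i):F_i\supseteq f\}$, which is $G_f$. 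Taking $f=f(q)$ and using that $f(q)^{\vee}$ is the face of $Q^{\vee}$ with $\phi(q)$ in its relative interior, we get $g-g'\in G_{f(q)}$ equals the subgroup attached to $f^{\vee}(\phi(q))$, so $(\phi(q),g)\sim(\phi(q'),g')$ in $M_{Q^{\vee}}(\lambda^{\vee})$ and $\Phi$ is well defined. Equivariance is immediate from $\Phi(h\cdot[q,g])=\Phi([q,g+h])=[\phi(q),g+h]=h\cdot\Phi([q,g])$, and continuity of $\Phi$ follows from that of $\phi\times\mathrm{id}$ since both quotient maps $Q\times\Z^n_2\to M_Q(\lambda)$ and $Q^{\vee}\times\Z^n_2\to M_{Q^{\vee}}(\lambda^{\vee})$ are quotient maps.

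The main obstacle is the construction of $\phi$ itself: producing, consistently over all of $Q$ at once, the system of defining functions for the facets and the induced barycentric coordinates, and verifying that the resulting map lands correctly in the faces of $Q^{\vee}$ — in particular the equality $\phi^{-1}(f^{\vee})=f$. This is a question purely about the differential topology of nice manifolds with corners (assembling compatible corner charts and collars by partitions of unity) and uses no input from the $\Z^n_2$-action whatsoever; it is precisely the content of the argument for \cite[Prop.~5.14]{MasPanov06}, which transfers verbatim to our setting because $Q^{\vee}$, the simplicial poset $\mathcal{P}_Q$, and the notion of characteristic function are defined in exactly the same way for $2$-torus manifolds as for torus manifolds. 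I emphasize that $\phi$ is claimed only to be continuous and face-preserving (not a homeomorphism), which is all that is needed here; once it is in hand, the second paragraph shows that everything else is formal.
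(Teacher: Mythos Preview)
Your argument is correct, and the passage from $\phi$ to $\Phi$ (well-definedness via $G_{f(q)^{\vee}}=G_{f(q)}$, equivariance, continuity) is handled more carefully than in the paper, which leaves that step entirely implicit.

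The construction of $\phi$ itself, however, follows a genuinely different route from the paper's sketch. The paper builds $\phi$ \emph{inductively over the skeleta of $Q$}: one first identifies the vertices of $Q$ with the corresponding vertices of $Q^{\vee}$, and then extends over each face $f$ of increasing dimension by a degree-one map $f\to f^{\vee}$; since every $f^{\vee}$ is a cone (hence contractible), there is no obstruction to such extensions. Your approach is instead an \emph{explicit} one: you pick defining functions $x_i$ for the facets, build barycentric-type coordinates $(\phi_f)_f$ from them, and read off a point of $\mathrm{Cone}(|\mathcal{P}_Q|)\cong Q^{\vee}$. Both methods produce a face-preserving continuous $\phi$; yours is more concrete but requires the partition-of-unity bookkeeping you flag as ``the main obstacle'', while the paper's obstruction-theoretic argument is shorter and sidesteps that entirely at the cost of being less explicit. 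One point worth noting: the paper's sketch emphasizes that $\phi|_f\colon f\to f^{\vee}$ is \emph{degree one}, a feature used later (in the proof of Theorem~\ref{thm:Main-1}(ii)) to conclude that $\phi^*\colon H^n(Q^{\vee},\partial Q^{\vee};\Z_2)\to H^n(Q,\partial Q;\Z_2)$ is an isomorphism. Your barycentric construction should also yield this, but you do not address it; for the proposition as stated it is not needed, but keep it in mind for the downstream application.
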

 
 Here $\phi: Q\rightarrow Q^{\vee}$ is constructed inductively, starting from an identification
of vertices and extending the map on each higher-dimensional face by a degree-one
map. Since every face $f^{\vee}$ of $Q^{\vee}$ is a cone,  there are no obstructions to such extensions.\n

 In addition,
 by a similar argument to that in~\cite[Theorem 4.8]{DaJan91}, we can obtain the following result.
 
 \begin{prop}  \label{Prop:Q-hat-Cohom}
   $H^*_G(M_{Q^{\vee}}(\lambda^{\vee});\Z_2)$ is isomorphic to $\Z_2[Q]$ where $G=\Z^n_2$.
 \end{prop}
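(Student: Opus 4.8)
The plan is to compute $H^*_G(M_{Q^{\vee}}(\lambda^{\vee});\Z_2)$ by mimicking the proof of \cite[Theorem~4.8]{DaJan91}, working directly with the Borel construction. Write $G=\Z^n_2$, $X:=M_{Q^{\vee}}(\lambda^{\vee})$, $X_G:=EG\times_G X$, and let $p\colon X\to Q^{\vee}$ be the orbit map. Over the relative interior of a face $f^{\vee}$ of $Q^{\vee}$ the fibre of $p$ is the single orbit $G/G_f$ (with $G_f$ as in~\eqref{Equ:Isotropy-Sub-Mfd}), and over $\mathrm{relint}(Q^{\vee})$ the action is free. The essential geometric input is that $Q^{\vee}$ is a cone, so every face $f^{\vee}$ is itself a cone with cone point in $\mathrm{relint}(f^{\vee})$; hence a small cone neighbourhood $U_{f^{\vee}}\subseteq Q^{\vee}$ of that cone point has $p^{-1}(U_{f^{\vee}})$ equivariantly deformation retracting onto the orbit $G/G_f$ over it, so that $EG\times_G p^{-1}(U_{f^{\vee}})\simeq BG_f$, whose $\Z_2$-cohomology is a polynomial ring on $\mathrm{rank}(G_f)=\mathrm{codim}(f)$ generators. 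When $f$ is a vertex $v=F_{i_1}\cap\cdots\cap F_{i_n}$ of $Q$ one gets $EG\times_G p^{-1}(U_{v^{\vee}})\simeq BG\simeq(\R P^{\infty})^n$, whose cohomology is the polynomial ring on the restrictions of the equivariant Thom classes $\tau_{F_{i_1}},\dots,\tau_{F_{i_n}}$, which form a basis of $H^1(BG;\Z_2)$ at $v$ by~\eqref{Equ:res-tau}.

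Next, I would cover $Q^{\vee}$ by the open sets $U_{f^{\vee}}$ over all its faces $f^{\vee}$ (so that $p^{-1}$ of the top face $U_{Q^{\vee}}$ has free action and contractible Borel construction) and feed this into the Mayer--Vietoris spectral sequence computing $H^*_G(X;\Z_2)$. On a pairwise intersection $p^{-1}(U_{f^{\vee}}\cap U_{g^{\vee}})$ the Borel construction is homotopy equivalent to $BG_{f\vee g}$, i.e. the polynomial subring generated by the $\tau_{F_i}$ with $F_i\supseteq f\vee g$; compatibility across these intersections is exactly the restriction rule~\eqref{Equ:res-tau}, and gluing two coordinate polynomial pieces over a common face forces precisely the defining relation $v_fv_{f'}=v_{f\vee f'}\sum_{f''\in f\cap f'}v_{f''}$ of $\Z_2[Q]$ (the analogue of Lemma~\ref{Lem:tau-relation}). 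Since every face of $Q^{\vee}$ is a cone, hence mod $2$ acyclic, the spectral sequence degenerates and identifies $H^*_G(X;\Z_2)$ with the inverse limit over $\mathcal{P}_{Q^{\vee}}\cong\mathcal{P}_Q$ of these polynomial rings, which is $\Z_2[\mathcal{P}_Q]=\Z_2[Q]$; under this identification $v_f\mapsto\tau_f$. Alternatively, one upgrades the resulting surjection $\Z_2[Q]\twoheadrightarrow H^*_G(X;\Z_2)$ to an isomorphism by a cell count as in \cite{DaJan91}, noting that $Q^{\vee}$ has the same $f$-vector as $Q$ and comparing Hilbert series via~\eqref{Equ:Hilbert-Face-Ring}.

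The step I expect to be the main obstacle is the combinatorial bookkeeping caused by $\mathcal{P}_Q$ being only a simplicial poset rather than the face poset of a simplicial complex: multiple intersections of facets may be disconnected, so one cannot restrict to squarefree monomials as in the polytopal setting of \cite{DaJan91}, and the Mayer--Vietoris patching has to be carried out over all faces $f^{\vee}$ of $Q^{\vee}$ while verifying both that no relations beyond those defining $\Z_2[\mathcal{P}_Q]$ are produced and that the spectral sequence indeed degenerates. Working throughout with $\Z_2$-coefficients --- so that orientations of normal bundles are irrelevant --- makes the argument formally identical to the torus case, which is why the proof of \cite{DaJan91} transplants with only cosmetic changes.
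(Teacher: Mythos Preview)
Your proposal is correct and takes essentially the same approach as the paper, which simply states that the result follows ``by a similar argument to that in~\cite[Theorem 4.8]{DaJan91}'' without further detail. Your sketch supplies precisely the adaptation of the Davis--Januszkiewicz argument that the paper has in mind: exploit that every face $f^{\vee}$ of $Q^{\vee}$ is a cone so that the Borel construction over a neighbourhood of its cone point is $BG_f$, then assemble these local polynomial pieces (via Mayer--Vietoris or an inverse-limit argument over $\mathcal{P}_Q$) to recover $\Z_2[Q]$, taking care with the simplicial-poset bookkeeping you flag.
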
 
 
 On the other hand, 
 $H^*_G(M_Q(\lambda);\Z_2)$ could be much more complicated. Indeed, it is shown in~\cite[Theorem 1.7]{Yu20} that
 $H^*_G(M_Q(\lambda);\Z_2)$ isomorphic to the so called
  \emph{topological face ring} of $Q$ over $\Z_2$ which involves the mod $2$ cohomology rings of all the
  faces of $Q$.

 \subsection{Proof of Theorem~\ref{thm:Main-1}\,(ii)}

 \begin{proof}
   We first prove the ``if'' part. Let $Q$ be an $n$-dimensional mod $2$ homology polytope and $G=\Z^n_2$. Since $H^1(Q;\Z_2)=0$ and $W$ is locally standard, the principal $G$-bundle $\xi_W$ determined by $W$ is a trivial $G$-bundle over $Q$. Then by~\cite[Lemma 3.1]{MaLu08}, $W$ is equivariantly homeomorphic to the canonical model $M_Q(\lambda_W)$ (see~\eqref{Equ-Glue-Const-Z2}).
  So by Proposition~\ref{Prop:Q-hat-Cohom}, there exists an 
  equivariant continuous map
  $$\Phi: W =M_Q(\lambda_W) \rightarrow M_{Q^{\vee}}(\lambda^{\vee}_W) := W^{\vee}.$$
  
 Let $\pi: W\rightarrow Q$ and $\pi^{\vee}: W^{\vee}\rightarrow Q^{\vee}$ be the projections, respectively.
 Let $F_1,\cdots, F_m$ be all the facets of $Q$. Since $Q$ is a mod $2$ homology polytope, so are $F_1,\cdots, F_m$. 
 For brevity, let 
 $$W_i = \pi^{-1}(F_i),\ \ W^{\vee}_i = (\pi^{\vee})^{-1}(F^{\vee}_i), \ 1\leq i \leq m.$$ 
 
 It is easy to see that the $\Z^n_2$-actions on $W\backslash \bigcup_i W_i$
 and $W^{\vee}\backslash \bigcup_i W^{\vee}_i$ are both
 free. Then we have
 $$  H^*_G\Big(W,\bigcup_i W_i;\Z_2 \Big) \cong
 H^*(Q,\partial Q;\Z_2), \ \ 
   H^*_G\Big(W^{\vee},\bigcup_i W^{\vee}_i;\Z_2\Big)
   \cong H^*(Q^{\vee},\partial Q^{\vee};\Z_2).  $$
 
 So $\Phi: W \rightarrow W^{\vee}$ induces a map between the following two exact sequences:
 \begin{equation} \label{Equ:Phi-Diagram}
       \xymatrix{
        \ \ar[r] &   H^*(Q^{\vee},\partial Q^{\vee};\Z_2)\ar[d]^{\phi^*} \ar[r]
        & H^*_G(W^{\vee};\Z_2)  \ar[d]^{\Phi^*} \ar[r] & H^*_G \big(\bigcup_i W^{\vee}_i;\Z_2 \big) \ar[d]^{\Phi^*} \ar[r] & \cdots  \\
         \ \ar[r]  &   H^*(Q,\partial Q;\Z_2)  \ar[r] &
          H^*_G(W;\Z_2)  \ar[r] & 
          H^*_G\big(\bigcup_i W_i;\Z_2 \big) \ar[r] & \cdots
                 } 
 \end{equation}
 
  Each $W_i$ is a $2$-torus manifold over the homology polytope $F_i$. So using induction and a Mayer-Vietoris argument, we may assume that in the diagram~\eqref{Equ:Phi-Diagram}, 
  $\Phi^* : H^*_G(\bigcup_i W^{\vee}_i;\Z_2)\rightarrow 
  H^*_G(\bigcup_i W_i;\Z_2)$ is an isomorphism.\n
    
  By Proposition~\ref{Prop:Hom-Polytope}, $H^*(|\mathcal{P}_Q|;\Z_2)\cong H^*(S^{n-1};\Z_2)$. Then by~\eqref{Equ:Q-hat-Bound}, we obtain
   \[ H^*(Q^{\vee},\partial Q^{\vee};\Z_2)\cong 
   H^*(D^n,S^{n-1};\Z_2).  \] 
  
  We also have $H^*(Q,\partial Q;\Z_2)\cong 
   H^*(D^n,S^{n-1};\Z_2)$ since $Q$ is an $n$-dimensional mod $2$ homology polytope. By the construction of $\phi$, it is easy to see that the homomorphism
    $\phi^*: H^*(Q^{\vee},\partial Q^{\vee};\Z_2)\rightarrow
    H^*(Q,\partial Q;\Z_2)$ is an isomorphism.
    Then by applying the five-lemma to the diagram~\eqref{Equ:Phi-Diagram}, we can deduce that $\Phi^*: H^*_G(W^{\vee};\Z_2)
    \rightarrow  H^*_G(W;\Z_2)$ is also an isomorphism.
   So by Proposition~\ref{Prop:Q-hat-Cohom}, $H^*_G(W;\Z_2)\cong \Z_2[Q]$.\n
   
    Besides, we also know that $\Z_2[Q]$
    is Cohen-Macaulay by Proposition~\ref{Prop:Hom-Polytope}. Then since $|\mathcal{P}_Q|$
   is a simplicial complex, all the three conditions in  Theorem~\ref{Thm:Equiv-Form-Cohom-Deg1} are satisfied. Hence
  $W$ is equivariantly formal
      and $H^*(W;\Z_2)$
     is generated by its degree-one part as a ring. The ``if'' part is proved.
  \n
  
   Next, we prove the ``only if'' part.  By the assumption on $W$ and Lemma~\ref{Lem:Conn-Intersect},
    all non-empty multiple
intersections of characteristic submanifolds of $W$ are connected and their cohomology
rings are generated by their degree-one elements. 
 So we may assume by induction that
all the proper faces of $Q$ are mod $2$ homology polytopes. In particular, the proper faces of $Q$ are all
 mod $2$ acyclic. From these assumptions, we need to prove that $Q$ itself is mod $2$ acyclic. \n
 
 By Proposition~\ref{Prop:Cohom-Polytope}, $|\mathcal{P}_Q|$ is a simplicial complex. So $|\mathcal{P}_Q|$ is the nerve simplicial complex of the cover of $\partial Q$ by the facets of $Q$.
 By a Mayer-Vietoris sequence argument, we can deduce that
 $H^*(\partial Q;\Z_2)\cong H^*(|\mathcal{P}_Q|;\Z_2)$. 
 This together with Proposition~\ref{Prop:Cohom-Polytope} shows that
  \begin{equation} \label{Equ:Bound-Q}
     H^*(\partial Q;\Z_2)\cong H^*(S^{n-1};\Z_2).
   \end{equation}  
  \n
   
   \textbf{Claim:} $H^1(Q;\Z_2)=0$.\n
      
   Since $W$ is equivariantly formal, $H^*_G(W;\Z_2)$ is a free $H^*(BG;\Z_2)$-module. On the other hand,
   $H^*(Q,\partial Q;\Z_2)$ is finitely generated
   over $\Z_2$ since $Q$ is compact. So 
     $H^*(Q,\partial Q;\Z_2)$ is a torsion $H^*(BG;\Z_2)$-module.
   It follows that the whole bottom row in the diagram~\eqref{Equ:Phi-Diagram} splits into short exact sequences:
   \begin{equation} \label{Equ:Short-Exact-Seq}
     0\rightarrow  
          H^k_G(W;\Z_2)  \rightarrow
          H^k_G \Big(\bigcup_i W_i;\Z_2 \Big) \rightarrow
          H^{k+1}(Q,\partial Q;\Z_2) \rightarrow 0, \ k\geq 0.
   \end{equation}
   
   Take $k=0$ above, we 
   clearly have $H^0_G(W;\Z_2)\cong  H^0_G \Big(\bigcup_i W_i;\Z_2 \Big)\cong \Z_2$. This implies 
  $H^1(Q,\partial Q;\Z_2)=0$. So in the following exact sequence,
  \[ \cdots \rightarrow H^1(Q,\partial Q;\Z_2)\rightarrow H^1(Q;\Z_2) \rightarrow H^1(\partial Q;\Z_2) \rightarrow \cdots, \]
  $H^1(Q;\Z_2)$ is mapped injectively into $H^1(\partial Q;\Z_2)\cong H^1(S^{n-1};\Z_2)$. Note that if $n=1$, the claim is trivial.
  When $n=2$, we have $\partial Q =S^1$ and $H^1(Q;\Z_2) = 0$ or $\Z_2$. 
  But by the classification of compact surfaces,
  the latter case is impossible. When $n\geq 3$, we have
  $H^1(\partial Q;\Z_2)=0$, so $H^1(Q;\Z_2)=0$.
  The claim is proved.\n
  
  Now since $H^1(Q;\Z_2)=0$, by the above proof of the ``if'' part, there exists an equivariant homeomorphism $\Phi$ from $W$ to the canonical model $M_Q(\lambda_W)$. In addition, by~\eqref{Equ:Q-hat-Bound} and Proposition~\ref{Prop:Cohom-Polytope}, we have
  $$H^*(\partial Q^{\vee};\Z_2) \cong H^*( |\mathcal{P}_Q|;\Z_2) \cong H^*(S^{n-1};\Z_2).$$  
 So we have an isomorphism
   \begin{equation} \label{Equ:Q-vee}
     H^*(Q^{\vee},\partial Q^{\vee};\Z_2)\cong H^*(D^n,S^{n-1};\Z_2).
     \end{equation}
   
  Then by the construction of $\phi$, the map $\phi^*: H^*(Q^{\vee},\partial Q^{\vee};\Z_2) \rightarrow H^*(Q,\partial Q;\Z_2)$ is an isomorphism in degree $n$ (since $Q$ is connected) and thus is injective in all degrees. So by an extended
version of the $5$-lemma, we can deduce that in the diagram~\eqref{Equ:Phi-Diagram} the map $\Phi^*:
H^*_G(W^{\vee};\Z_2)\rightarrow H^*_G(W;\Z_2)$ is injective. Moreover,
 \begin{itemize}
   \item $H^*_G(W^{\vee};\Z_2)=H^*_G(M_{Q^{\vee}}(\lambda^{\vee}_W);\Z_2)\cong \Z_2[Q]$
by Proposition~\ref{Prop:Q-hat-Cohom}, and\n
\item
$\Z_2[Q]\cong H^*_G(W;\Z_2)$ by Corollary~\ref{Cor:Equiv-Cohom-Face-Ring}.
\end{itemize}
 So $H^*_G(W^{\vee};\Z_2)$ and $H^*_G(W;\Z_2)$ have the same dimension over $\Z_2$ in each degree.
Therefore, the monomorphism $\Phi^*:
H^*_G(W^{\vee};\Z_2)\rightarrow H^*_G(W;\Z_2)$ is actually an isomorphism. Then by the $5$-lemma again, we can deduce from the diagram~\eqref{Equ:Phi-Diagram} that $\phi^*: H^*(Q^{\vee},\partial Q^{\vee};\Z_2)\rightarrow
    H^*(Q,\partial Q;\Z_2)$ is an isomorphism.
   So by~\eqref{Equ:Q-vee}, 
   $$  H^*(Q,\partial Q;\Z_2)\cong H^*(D^n,S^{n-1};\Z_2) $$
   which implies that $Q$ is mod $2$ acyclic by Poincar\'e-Lefschetz duality.
   This finishes the proof. 
 \end{proof}

 \subsection{Proof of Theorem~\ref{thm:Main-1}\,(i)}  
    
  \begin{proof} We can reduce Theorem~\ref{thm:Main-1}\,(i) to Theorem~\ref{thm:Main-1}\,(ii) by real blow-ups of $W$ along sufficient many facial submanifolds, which corresponds to doing some barycentric subdivisions of
the face poset $\mathcal{P}_Q$ of $Q$ (see Figure~\ref{Fig:Cutting-Face}). Indeed, after doing enough barycentric subdivisions to $\mathcal{P}_Q$,
we can turn $|\mathcal{P}_Q|$ into a simplicial complex.
Let $\widehat{W}$ be the $2$-torus
manifold obtained after these real blow-ups on $W$ and $\widehat{Q}$
be its orbit space (with $|\mathcal{P}_{\widehat{Q}}|$
being a simplicial complex).

 \begin{itemize}  
  \item[Fact-1:]  $\widehat{W}$
  is equivariantly formal if and only if so is $W$ (by Proposition~\ref{Prop:blow-up-Equiv}).
  
  \item[Fact-2:] $\widehat{Q}$ is mod $2$ face-acyclic if and only if so is $Q$ (by Lemma~\ref{Lem:Q-cut-face}).\n
  
   \end{itemize}
   
    \begin{figure}
        \begin{equation*}
        \vcenter{
            \hbox{
                  \mbox{$\includegraphics[width=0.86\textwidth]{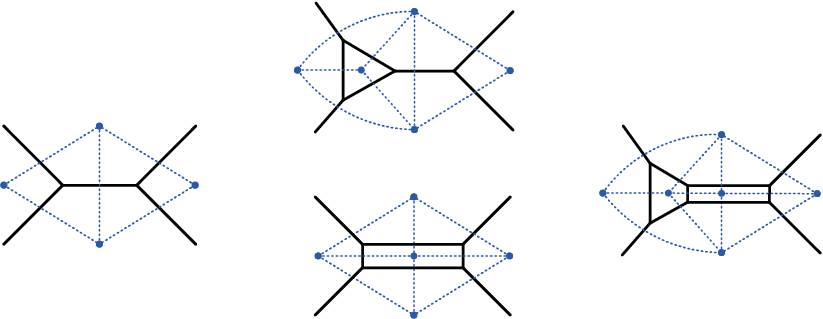}$}
                 }
           }
     \end{equation*}
   \caption{Cutting a vertex and an edge} \label{Fig:Cutting-Face}
   \end{figure}

    We first prove the ``if'' part. Suppose $W$ is locally standard and $Q$ is mod $2$ face-acyclic. Then $\widehat{W}$
   is also locally standard and $\widehat{Q}$ is a mod $2$ homology polytope by Fact-2. 
   So by Theorem~\ref{thm:Main-1}\,(ii), $\widehat{W}$ is equivariantly formal, then so is $W$.
 \n
 
  Next, we prove the ``only if'' part. If $W$ is equivariantly formal, then so is $\widehat{W}$, and $W$ is locally standard by Theorem~\ref{Thm:Locally-Std}. So by Corollary~\ref{Cor:Equiv-Cohom-Face-Ring}, we have a graded ring isomorphism $H^*_G(\widehat{W};\Z_2)\cong \Z_2[\widehat{Q}]$.
 Moreover, since $|\mathcal{P}_{\widehat{Q}}|$ is a simplicial complex, $\Z_2[\widehat{Q}]$ is generated by its degree-one elements, then so is $H^*_G(\widehat{W};\Z_2)$. In addition, since $\iota_{\widehat{W}}^* : H^*_G(\widehat{W};\Z_2)\rightarrow
 H^*(\widehat{W};\Z_2)$ is surjective,
   $H^*(\widehat{W};\Z_2)$ is also generated by its degree-one elements. Then by Theorem~\ref{thm:Main-1}\,(ii), $\widehat{Q}$ is a mod $2$ homology polytope. So by Fact-2, $Q$ is mod $2$ face-acyclic.  
   \end{proof}

   \subsection{Proof of Theorem~\ref{thm:Main-2}}
 \ \n
\begin{proof} 
  We first prove the ``if'' part.  
   Assume that there exists a regular $\mathrm{m}$-involution $\tau$ on $W$. By definition the fixed point set $W^{\tau}$ of $\tau$ is discrete, then 
   so is $W^{\Z^n_2}\subseteq W^{\tau}$.
 This implies that $Q$ must have vertices.   
     Let $p$ be a vertex of $Q$ and let
     $F_1,\cdots, F_n$ be all the facets containing $p$.
     By the property of $\lambda_W$, 
     $$e_1=\lambda_W(F_1),\cdots, e_n=\lambda_W(F_n)$$ form a linear basis of $\Z_2^n$ over $\Z_2$. Then since the $\Z_2^n$-action on $W$ is locally standard, it is easy to see that only when $g=e_1+\cdots+e_n$ could the fixed point set $W^{\tau_g}$ be discrete. So we must have $\tau=\tau_{e_1+\cdots+e_n}$, and in particular
     $$W^{\tau}=W^{\tau_{e_1+\cdots+e_n}}=W^{\Z_2^n}.$$ 
  Hence   $$ \dim_{\Z_2} H^*(W^{\Z_2^n};\Z_2) = \dim_{\Z_2} H^*(W^{\tau};\Z_2) =\dim_{\Z_2} H^*(W;\Z_2)$$
    where the second ``$=$'' is due to the assumption that $\tau$ is an $\mathrm{m}$-involution.  
 So by Theorem~\ref{thm:AllPuppe}, $W$ is equivariantly formal. Then $Q$ is mod $2$ face-acyclic by Theorem~\ref{thm:Main-1}. In particular, 
    every face of $Q$ has a vertex and the $1$-skeleton of $Q$ is connected (by Lemma~\ref{Lem:1-Skeleton-Connect}).\n
     
 It remains to prove that
       the image of $\lambda_W: \mathcal{F}(Q)\rightarrow \Z_2^n$ is exactly $\{e_1,\cdots, e_n\}$.
       Indeed, take an edge $e$ of $Q$ whose vertices are $p$ and $p'$. 
    So the $n$ facets of $Q$ that contain $p'$ are $F_1,\cdots, F_{i-1}, F'_i, F_{i+1},\cdots, F_n$ for some $1\leq i \leq n$. Then since
     $\tau_{e_1+\cdots+e_n}$ is an $\mathrm{m}$-involution, we must have
     $$ \lambda_W(F_1)+\cdots + \lambda_W(F_{i-1}) +\lambda_W( F'_i)+ \lambda_W(F_{i+1}) + \cdots+\lambda_W(F_n) = e_1+\cdots+e_n. $$
     This implies $\lambda_W( F'_i)=e_i$. Then since the $1$-skeleton of $Q$ is connected and every facet $F$ of $Q$ contains a vertex, we can iterate the above argument
  to prove that every 
 $\lambda_W(F)$ must take value in $\{e_1,\cdots, e_n\}$.
 \n
  Next, we prove the ``only if'' part. Suppose
       $Q$ is mod $2$ face-acyclic and the values of
      the characteristic function $\lambda_W$ 
       of $Q$ consist exactly of a linear basis $e_1,\cdots, e_n$ of $\Z^n_2$.
    By Theorem~\ref{thm:Main-1}\,(i), $W$ is equivariantly formal. So we have
      $$  \dim_{\Z_2} H^*(W^{\Z_2^n};\Z_2)=\dim_{\Z_2} H^*(W;\Z_2) \ \ \text{(by Theorem~\ref{thm:AllPuppe})}.$$
            
        On the other hand, our assumption on $\lambda_W$  implies that the regular involution 
       $\tau=\tau_{e_1+\cdots+e_n}$ satisfies $W^{\tau}=W^{\Z^n_2}$ which is a discrete set. Then we have
       $$ \dim_{\Z_2} H^*(W^{\tau};\Z_2) = \dim_{\Z_2} H^*(W^{\Z_2^n};\Z_2)=\dim_{\Z_2} H^*(W;\Z_2).$$
      So $\tau$        
       is a regular $\mathrm{m}$-involution on $W$ by definition. The theorem is proved.
        \end{proof}
 
        \begin{rem}
    If we do not assume a $2$-torus manifold $W$ to be locally standard, even if $W$ admits a regular $\mathrm{m}$-involution, $W$ may not be  equivariantly formal or locally standard. For example: let 
    $$S^2=\{(x_1,x_2,x_3)\in \R^3\,|\, x_1^2+x^2_2 + x^2_3 =1 \}.$$
    
     Define two involutions
    $\sigma$ and $\sigma'$ on $S^2$ by
     $$\sigma(x_1,x_2,x_3)=(-x_1,-x_2,x_3), \quad
      \sigma'(x_1,x_2,x_3) = (x_1,x_2,-x_3). $$
      
    It is easy to see that $\sigma$ is an 
    $\mathrm{m}$-involution on $S^2$ with two isolated fixed points $(0,0,1)$ and $(0,0,-1)$.
      But since the $\Z^2_2$-action on $S^2$ determined by $\sigma$ and $\sigma'$ has no global fixed point, it is not equivariantly formal. We can also directly check that this $\Z^2_2$-action on $S^2$ is not locally standard. 
   \end{rem} 
   
  Finally, we propose some questions on weakly equivariantly formal $2$-torus manifolds.\n  
   
   \textbf{Question-3: Does there exist a weakly equivariantly formal $2$-torus manifold which is
    not equivariantly formal?}
   \n
   
   \textbf{Question-4:} If a $2$-torus manifold is weakly equivariantly formal, are there any restrictions on
  the topology and combinatorial structure of its orbit space?\n  
  \textbf{Question-5:} Whether or not a $2$-torus manifold being weakly equivariantly formal is determined only by the topology and combinatorial structure of its orbit space?

    \section*{Acknowledgment}
   The author wants to thank Anton Ayzenberg and the anonymous reviewer for some valuable comments and suggestions.

   \section*{Conflict of interest}
           
 The author declares that he has no conflict of interest.

\end{document}